\theoremstyle{plain}
\newtheorem{theorem}{Theorem}[section] 
\newtheorem{lemmy}[theorem]{Lemma}
\newtheorem{prop}[theorem]{Proposition}
\theoremstyle{remark}
\newtheorem{rem}[theorem]{Remark}
\theoremstyle{definition}
\newtheorem{defn}{Definition}[section]
\newcommand{\A}{\mathbb{A}}
\newcommand{\Af}{\mathbb{A}_{\textrm{f}}}
\newcommand{\R}{\mathbb{R}}
\newcommand{\C}{\mathbb{C}}
\newcommand{\Hb}{\mathbb{H}}
\newcommand{\Z}{\mathbb{Z}}
\newcommand{\Q}{\mathbb{Q}}
\newcommand{\SL}{\operatorname{SL}}
\newcommand{\N}{\mathbb{N}}
\newcommand{\Vol}{\operatorname{Vol}}
\newcommand{\diag}{\operatorname{diag}}
\title{Unipotent mixing for general moduli}
\author{Edgar Assing}
\date{\today}
\email{assing@math.uni-bonn.de}
\begin{document}

\thanks{The author is supported by the Germany Excellence Strategy grant EXC-2047/1-390685813 and also partially funded by the Deutsche Forschungsgemeinschaft (DFG, German Research Foundation) – Project-ID 491392403 – TRR 358.}
\subjclass[2020]{Primary:  11F03, 11N36, 37A17}
\keywords{equidistribution, mixing, horocycles, shifted convolution sums}

\maketitle

\begin{abstract}
In this note we prove a joint equidistribution result for discrete low lying horocycles. This generalizes previous work of Blomer and Michel, where it was crucially assumed that the number of discrete points is prime. 
\end{abstract}


\section{Introduction}

The main result of this paper, given in Theorem~\ref{th:main} below, roughly states the following. As long as there is no \textit{obvious} obstruction, the points 
\begin{equation}
	\left\{ \left(\frac{a+i}{q},\frac{b_qa+i}{q}\right)\colon a=1,\ldots, q \right\} \nonumber
\end{equation}
equidistribute in $\textrm{SL}_2(\Z)\backslash \Hb \times \textrm{SL}_2(\Z)\backslash \Hb$ for $q\to \infty$. This will be shown without any restriction on $q$, generalizing the result in \cite{BM} where $q$ was assumed to be prime. Before giving a precise statement of the main theorem we will make some general introductory remarks.

\subsection{Low-lying horocycles}

Studying the distribution of continuous low lying horocycles in the modular curve is a classical discipline going back to work of Zagier; see \cite{Za}. More precisely, we consider the following commutative diagram.
\begin{center}
\begin{tikzcd}
	\mathbb{R} \arrow[rr, hook] \arrow[d]  &  & \{ x+\frac{i}{T}\colon x\in \mathbb{R}\} \arrow[r, hook] \arrow[d]  &  \mathbb{H}\arrow[d]  \\
	\mathbb{R}/\mathbb{Z} \arrow[rr, hook] &  & \{x+\frac{i}{T}\colon x\in \mathbb{R}/\mathbb{Z}\} \arrow[r, hook] &\textrm{SL}_2(\mathbb{Z})\backslash \mathbb{H}
\end{tikzcd}
\end{center}
It turns out that the so obtained closed horocycle $\{x+i/T\colon x\in \R/\Z\}$ equidistributes as $T\to \infty$. In other words, for $f\in \mathcal{C}_c^{\infty}(\textrm{SL}_2(\mathbb{Z})\backslash \mathbb{H})$, we have
\begin{equation}
	\lim_{T\to \infty}\int_0^1 f(x+i/T)dx = \frac{3}{\pi}\int_{\textrm{SL}_2(\mathbb{Z})\backslash \mathbb{H}} f(x+iy)\frac{dxdy}{y^2}. \nonumber
\end{equation} 
This has been studied in many variations and much stronger results are available. More discussion can for example be found in \cite{Sa, He, St}.

Recall that the hyperbolic metric on $\mathbb{H}$ is given by $ds = \frac{\vert dz\vert}{dy}$, so that the closed horocycle  $\{x+i/T\colon x\in \R/\Z\}$ has length $T$. It is a natural to consider $\asymp T$ well spaced points on this closed horocycle and ask whether the so obtained set still equidistributes. This can be made precise as follows. For an integer $q\in \N$ we consider
\begin{equation}
	\overline{H}_q=\left\{ \frac{a+i}{q}\colon a\text{ mod }q \right\} \subseteq \textrm{SL}_2(\Z)\backslash \mathbb{H}. \label{eq:class_low_lying_hjor}
\end{equation} 
The fact that this set equidistributes as $q\to\infty$ is folklore and we refer to \cite{He1, Ve, BSY} for more discussion. We also record the following phase space version
\begin{equation}
	\lim_{q\to \infty}\frac{1}{q} \sum_{a\text{ mod }q}f\left(\frac{1}{\sqrt{q}}\left(\begin{matrix} 1 & a \\ 0 & q \end{matrix}\right)\right) = \frac{1}{\Vol(\textrm{SL}_2(\mathbb{Z})\backslash \textrm{SL}_2(\mathbb{\R}))}\int_{\textrm{SL}_2(\mathbb{Z})\backslash \textrm{SL}_2(\mathbb{\R})} f(g)dg, \label{eq:fist_phase_equidist_stat}
\end{equation}
for $f\in \mathcal{C}_c^{\infty}(\textrm{SL}_2(\mathbb{Z})\backslash \textrm{SL}_2(\mathbb{\R}))$. Note that in view of our applications it is important to work on the group level and not only on $\textrm{SL}_2(\Z)\backslash\Hb$. We refer to Theorem~\ref{th:S-arithmetic_horocycle} below for a more general statement.

\begin{rem}
Recall that the classical Hecke correspondence is given by 
\begin{equation}
	\mathcal{T}_q = \left\{ \textrm{SL}_2(\Z)\cdot \frac{1}{\sqrt{q}}\left(\begin{matrix} a & b\\ 0 & d\end{matrix}\right)\colon ad=q,\, b\text{ mod }d  \right\} \subseteq \textrm{SL}_2(\mathbb{Z})\backslash \textrm{SL}_2(\mathbb{\R}). \nonumber
\end{equation}
We directly see that, for $q$ prime, the discrete horocycle agrees with $\mathcal{T}_q$ up to the point $\textrm{diag}(\sqrt{q},1/\sqrt{q})$. Since equidistribution for Hecke sets is well known, we directly inherit equidistribution of the discrete horocycle in this situation. This was pointed out in \cite[Section~3.1]{GM}.

If $q$ is not prime, then the difference between the discrete horocycle and the Hecke correspondence is more significant. This is best seen in the extreme example $q=2^n$, where $\frac{1}{2}\sharp \mathcal{T}_q \sim q$ for large $n$.
\end{rem}

An interesting variation of the problem discussed so far is to consider the slightly sparser sets
\begin{equation}
	\overline{H}_q^{\textrm{pr}}=\left\{ \frac{a+i}{q}\colon a\text{ mod }q,\, (a,q)=1 \right\} \subseteq \textrm{SL}_2(\Z)\backslash \mathbb{H}. \label{eq:class_low_lying_prim}
\end{equation} 
Note that $\sharp\overline{H}_q^{\textrm{pr}} = \varphi(q)$, where $\varphi$ is the Euler $\varphi$-function. Equidistribution of these sets (and more) was discussed in \cite{BSY, Ja, ELS} for example. Indeed, by say \cite[Theorem~1]{Ja}, we have  
\begin{equation}
	\lim_{q\to \infty}\frac{1}{\varphi(q)} \sum_{\substack{a\text{ mod }q \\ (a,q)=1}}f\left(\frac{1}{\sqrt{q}}\left(\begin{matrix} 1 & a \\ 0 & q \end{matrix}\right)\right) = \frac{1}{\Vol(\textrm{SL}_2(\mathbb{Z})\backslash \textrm{SL}_2(\mathbb{\R}))}\int_{\textrm{SL}_2(\mathbb{Z})\backslash \textrm{SL}_2(\mathbb{\R})} f(g)dg, \label{eq:fist_phase_equidist_stat_prim}
\end{equation}
for $f\in \mathcal{C}_c^{\infty}(\textrm{SL}_2(\mathbb{Z})\backslash \textrm{SL}_2(\mathbb{\R}))$. An $S$-arithmetic version of this is stated in Theorem~\ref{th:S-arithmetic_horocycle_prim} below.

One can go even further by putting stronger restrictions on $a\text{ mod }q$ obtaining even sparser equidistribution results. Such problems have been considered in \cite{Ve, SU}.

\begin{rem}
Note that we are considering points of a fixed spacing on a periodic orbit of the horocycle flow. It is the ambient orbit that is varying. It is also natural to consider a fixed generic orbit of the horocycle flow and enquire about the distribution of certain points on this. In this situation recent progress has been made in \cite{ELS, FKR}.
\end{rem}

\subsection{Simultaneous equidistribution results}

After having studied the distribution of individual low lying horocycles, we can look at finer properties thereof. A natural next step is to look for mixing and joint equidistribution. In the case of continuous horocylces (purely) dynamical methods are very powerful. We state a special case \cite[Theorem~10.2]{Ma1} adapted to our notation.

\begin{theorem}\label{th:cont_joint_dist}
Let $I\subseteq \R$ be a fixed non-empty interval and let $y\in \R$ be irrational. Then
\begin{equation}
	\left\{ \left(\frac{1}{\sqrt{T}}\left(\begin{matrix} 1 & x \\ 0 & T\end{matrix}\right),\frac{1}{\sqrt{T}} \left(\begin{matrix} 1 & yx \\ 0 & T\end{matrix}\right) \right)\colon x\in I  \right\} \subseteq {\normalfont \textrm{SL}}_2(\Z)\backslash {\normalfont \textrm{SL}}_2(\R) \times {\normalfont \textrm{SL}}_2(\Z)\backslash {\normalfont \textrm{SL}}_2(\R) \nonumber
\end{equation}	
equidistributes as $T\to\infty$ with respect to the uniform probability measure.
\end{theorem}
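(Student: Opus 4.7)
The plan is to view the measure
\[
\mu_T := \frac{1}{|I|}\int_I \delta_{(a_T u_x,\, a_T u_{yx})}\,dx,\qquad a_T := \diag(T^{-1/2}, T^{1/2}),\ u_x := \smat{1}{x}{0}{1},
\]
as a bounded-length segment of the orbit through $(a_T, a_T)$ of the one-parameter unipotent subgroup
\[
U_y := \{(u_x, u_{yx}) : x \in \R\}\subset G\times G,
\]
where $G = \SL_2(\R)$ and $X = \SL_2(\Z)\backslash G$. Equidistribution of $\mu_T$ then falls naturally in the scope of the Ratner--Mozes--Shah machinery for unipotent flows on $X\times X$.

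Because $\mu_T$ is one-dimensional of fixed length, Ratner is not directly applicable. I would first \emph{thicken} $\mu_T$ by convolving with a smooth bump supported in a small $\epsilon$-neighbourhood of the identity in a transverse direction (for instance in the diagonal Cartan), obtaining a measure $\tilde\mu_T$ whose support is full-dimensional in $X\times X$. For any fixed smooth test function, $\int f\,d\tilde\mu_T = \int f\,d\mu_T + O(\epsilon)$ uniformly in $T$. Simultaneously I would invoke Dani--Margulis non-divergence in $X\times X$ for the $U_y$-action to ensure tightness of $\{\tilde\mu_T\}_T$, even though the base point $(\Gamma a_T,\Gamma a_T)$ degenerates into the cusp as $T\to\infty$.

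Every weak-$\ast$ accumulation point $\mu_\infty$ of $\tilde\mu_T$ is then, by Mozes--Shah, algebraic: it is the normalised Haar measure on a closed homogeneous piece $(\Gamma\times\Gamma) \backslash (\Gamma\times\Gamma) H\,(g_1, g_2)$ of $X\times X$, for some closed connected subgroup $H\leq G\times G$ containing a conjugate of $U_y$. Ratner's orbit-closure classification leaves only a handful of candidates for $H$: the group $U_y$ itself, the horocyclic $U\times U$, a twisted diagonal $\{(g,\varphi(g)):g\in G\}$ for an automorphism $\varphi$ of $G$, or all of $G\times G$. The first two are incompatible with $U_y$-orbits failing to be closed at generic base points; a twisted diagonal would require $\varphi(u_x)=u_{yx}$, hence $y\in\Q$, which is excluded. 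So $H=G\times G$ and $\mu_\infty$ is the product Haar measure, and sending $\epsilon\to 0$ completes the proof.

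The hardest step is the second paragraph: the base points $(\Gamma a_T,\Gamma a_T)$ diverge into the cusp of $X\times X$, making both the non-divergence estimate and the application of Mozes--Shah to the family $\tilde\mu_T$ delicate. Once tightness and algebraicity of the limit are established, the arithmetic rigidity of Ratner's classification together with the irrationality of $y$ combine cleanly to force $H=G\times G$.
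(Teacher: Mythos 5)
Your overall direction --- unipotent dynamics for the skew unipotent $U_y=\{(u_x,u_{yx})\}$, with $u_x=\smat{1}{x}{0}{1}$, acting on the product space, and exclusion of twisted diagonals via the irrationality of $y$ --- is the right one; it is essentially the Ratner-based route of \cite{Bu} that the paper alludes to, while the paper itself does not reprove the statement but quotes it from \cite{Ma1}, whose proof is a direct application of Shah's theorem on expanding translates of unipotent orbit segments. However, there is a genuine gap at exactly the step you flag as hardest. You treat $\mu_T$ as a \emph{bounded-length} $U_y$-segment through the divergent basepoint $(\Gamma a_T,\Gamma a_T)$, $a_T=\diag(T^{-1/2},T^{1/2})$, and try to compensate for the lack of invariance by a transverse thickening plus Mozes--Shah. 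Neither tool applies: Mozes--Shah classifies limits of measures that are themselves invariant and ergodic under some unipotent one-parameter subgroup, or of normalized unipotent orbit segments whose length tends to infinity; a thickened segment of fixed length is neither, and at no point do you establish any unipotent invariance of a limit. Dani--Margulis nondivergence likewise gives nothing for bounded-length segments whose basepoint escapes; indeed, read literally (unipotent time confined to the fixed interval $I$), the first coordinate $\Gamma a_Tu_x$ is the point $x/T+i/T$, which is $\SL_2(\Z)$-equivalent to a point of height at least $T/(1+x^2)$, so the whole configuration leaves every compact set and no nontrivial limit exists. (This also shows that the displayed matrices must be read as in \cite{Ma1,BM}: the intended points are $u_xa_T$, i.e.\ $x+i/T$ with $x\in I$, equivalently $\tfrac{1}{\sqrt T}\smat{1}{Tx}{0}{T}$.)

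The missing idea is that, in the correct normalization, the set is the right translate by $(a_T,a_T)$ of the \emph{fixed} segment $\{(u_x,u_{yx})\colon x\in I\}$, i.e.\ a $U_y$-orbit segment of parameter length $\asymp T\vert I\vert\to\infty$. It is this expansion --- not thickening --- that supplies both ingredients you need: Dani--Margulis nondivergence for long unipotent segments (uniform in the basepoint, so the cusp excursion of $\Gamma a_T$ is harmless), and $U_y$-invariance of every weak-$\ast$ limit, because a fixed element of $U_y$ shifts the time parameter by $O(1)$, which is negligible against the segment length. With tightness and invariance in hand, Ratner's measure classification together with a linearization argument applies; the twisted diagonals $\{(g,\delta g\delta^{-1})\}$ are excluded by $y\notin\Q$ exactly as you say, but your list of candidate intermediate groups is incomplete: with $N$ the full upper-triangular unipotent, the groups $N\times N$, $N\times G$ and $G\times N$ all contain $U_y$ and have closed finite-volume orbits, and components supported on them must be ruled out, e.g.\ by the fact that both marginals of the limit are Haar by the classical equidistribution of low-lying horocycles. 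Shah's theorem packages all of these steps in one stroke, which is why the paper simply invokes \cite{Ma1}.
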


\begin{rem}
This result covers \cite[Theorem~1.3]{BM}, where it has to be assumed that $y$ is badly approximable in a weak but quantitative form. Note that the proof of Theorem~\ref{th:cont_joint_dist}, as presented in \cite{Ma1}, reduces to an application of Shah's theorem. This argument was recently rediscovered in \cite{Bu}.
\end{rem}

Note that the condition $y\in \R\setminus\Q$ in Theorem~\ref{th:cont_joint_dist} is optimal. Indeed, this translates into the property that the lattices $\textrm{SL}_2(\Z)$ and $\diag(1,y)^{-1}\textrm{SL}_2(\Z) \diag(1,y)$ are not commensurable. However, if $y\in \Q$, then, as a result of the commensurability of the two lattices, the flow gets trapped in a Hecke correspondence. This feature is used in the proof of \cite[Theorem~5.(B)]{Ma2} and is also explained in \cite{Bu}.

The main result of this paper concerns a discrete version of Theorem~\ref{th:cont_joint_dist}. To state this result we have to find a suitable replacement for the condition $y\in \R\backslash \Q$. To do so we introduce the lattices
\begin{equation}
	\Lambda_{q,b} = \{ (n_1,n_2)\in \Z^2\colon n_1+bn_2\equiv 0\text{ mod }q \}\nonumber
\end{equation}
and define $s(q;b)$ as the first successive minimum of $\Lambda_{q,b}$. See Section~\ref{sec:lattice} for more details and some useful properties of $\Lambda_{q,b}$ and $s(q;b)$. In particular, we refer to Remark~\ref{rem:p_adic_s} below which contains a discussion on how $s(q;b)$ can relate to rationality. We are now ready to state our main theorem.

\begin{theorem}\label{th:main}
Let $((q_n,b_n))_{n\in \N}$ be a sequence of integers with $(q_n,b_n)=1$ and $s(q_n;b_n)\to\infty$. Then
\begin{equation}
	\overline{H}_{q_n,b_n} = \left\{ \left(\frac{a+i}{q_n},\frac{ab_n+i}{q_n}\right)\colon a \text{ mod }q_n \right\} \nonumber
\end{equation}
equidistributes in ${\normalfont \textrm{SL}}_2(\Z)\backslash \Hb \times {\normalfont \textrm{SL}}_2(\Z)\backslash \Hb$ with respect to the uniform probability measure.
\end{theorem}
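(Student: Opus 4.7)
The plan is spectral. By density and Sobolev-type arguments it suffices to verify that
\[
S(q_n,b_n) := \frac{1}{q_n}\sum_{a\bmod q_n} f_1\!\left(\frac{a+i}{q_n}\right)\overline{f_2\!\left(\frac{ab_n+i}{q_n}\right)}
\]
tends to $\int f_1\cdot\overline{\int f_2}$ as $n\to\infty$ when each $f_j$ ranges over a spectral basis of $L^2(\SL_2(\Z)\backslash\Hb)$, i.e.\ a constant, a Hecke--Maass or holomorphic cuspidal newform, or an Eisenstein series. The constant--constant case produces the main term. When exactly one factor is non-constant, the substitution $a\mapsto ab_n\bmod q_n$ (a bijection since $\gcd(q_n,b_n)=1$) reduces the claim to the single-horocycle equidistribution of Theorem~\ref{th:S-arithmetic_horocycle}. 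Thus the crux is the case when neither factor is constant.

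Assume $f_1,f_2$ are cuspidal or Eisenstein. Expanding at the cusp infinity, $f_j(x+iy) = \sum_{n\neq 0}\rho_j(n)W_j(|n|y)e(nx)$ (constant terms for Eisenstein series are handled as above), inserting into $S(q_n,b_n)$ and carrying out the $a$-sum produces the congruence $n_1+b_n n_2 \equiv 0 \bmod q_n$ and therefore
\[
S(q_n,b_n) = \frac{1}{q_n}\sum_{(n_1,n_2)\in \Lambda_{q_n,b_n}\setminus\{0\}}\rho_1(n_1)\overline{\rho_2(n_2)}\,W_1(|n_1|/q_n)\overline{W_2(|n_2|/q_n)}.
\]
The rapid decay of the archimedean weights $W_j$ effectively restricts the summation to the box $|n_1|,|n_2|\ll q_n^{1+\varepsilon}$.

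Here the hypothesis $s(q_n;b_n)\to\infty$ enters via the geometry of numbers. Fix a Minkowski-reduced basis $(v_1,v_2)$ of $\Lambda_{q_n,b_n}$ with $|v_1|\asymp s(q_n;b_n)\leq\sqrt{q_n}$ and $|v_2|\asymp q_n/s(q_n;b_n)$, and parametrise $(n_1,n_2)=kv_1+\ell v_2$. For $k=0$ the inner sum has the Rankin--Selberg-like shape $\sum_\ell\rho_1(\alpha\ell)\overline{\rho_2(\beta\ell)}\widetilde W(\ell)$ with $|\alpha|,|\beta|\asymp q_n/s(q_n;b_n)$ and $|\ell|\ll s(q_n;b_n)q_n^\varepsilon$; Cauchy--Schwarz together with Rankin--Selberg and the Kim--Sarnak bound $|\rho_j(n)|\ll n^{\theta+\varepsilon}$ for some $\theta<1/2$ bounds this by $\ll q_n^{-\eta}$ for an absolute $\eta>0$, using $s(q_n;b_n)\leq \sqrt{q_n}$. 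For $k\neq 0$ the inner sum is a shifted convolution sum
\[
\sum_\ell \rho_1(A\ell+B)\overline{\rho_2(C\ell+D)}\widetilde W(\ell),\qquad AD-BC = \pm k q_n,
\]
of length $\asymp s(q_n;b_n)$ with non-trivial shift determinant, and a suitable power-saving shifted convolution estimate yields a total contribution that is $o(1)$ after summing over the $\ll q_n/s(q_n;b_n)$ admissible values of $k$ and dividing by $q_n$, using precisely $s(q_n;b_n)\to\infty$.

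The main obstacle is to establish the shifted convolution estimate with enough uniformity---uniform in the shift $kq_n$, in the bilinear coefficients $A,B,C,D$, and across the cuspidal spectrum---so that the saving in $s(q_n;b_n)$ actually survives the sum over $k$; this is where the bulk of the technical work lies, presumably via a Kuznetsov-based analysis. Treating the Eisenstein contribution, where the inner sum degenerates into a binary additive divisor-type problem, is a parallel but separate matter that must be handled with the same uniformity. A final technical point is the dependence of all implicit constants on the spectral parameter of $f_1,f_2$, controlled by standard Sobolev norms and the rapid decay of the archimedean Whittaker functions.
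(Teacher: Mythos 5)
Your overall reduction (spectral decomposition, unfolding the $a$-sum into the lattice $\Lambda_{q_n,b_n}$, handling the one-constant-factor case by the substitution $a\mapsto ab_n$ and Theorem~\ref{th:S-arithmetic_horocycle}) is sound, but the treatment of the bilinear case has a structural gap. In your parametrization $(n_1,n_2)=kv_1+\ell v_2$ the inner ``shifted convolution'' for fixed $k\neq 0$ is a sum of length only $\asymp s(q_n;b_n)q_n^{\epsilon}$ whose linear forms have coefficients $\asymp q_n/s(q_n;b_n)$ and determinant $\pm kq_n\geq q_n$; in this regime (number of terms at most the square root of the determinant) no power-saving shifted convolution estimate exists, and none could hold uniformly in $k$: the $\ell=0$ terms of your inner sums run over the multiples of the \emph{short} vector $v_1$, and summing these over $k\ll q_n^{1+\epsilon}/s$ produces a genuine Rankin--Selberg diagonal of size $\asymp s(q_n;b_n)^{-1}$ with no cancellation at all (take $f_1=\overline{f_2}$ and a minimal vector of the shape $(x,-x)$ as in Remark~\ref{rm:Wieser}). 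So the dangerous degenerate piece is not your $k=0$ line (multiples of the long vector, which is indeed a harmless power-saving term) but sits inside your off-diagonal, and the whole argument can only ever save a factor $s(q_n;b_n)$, never a power of $q_n$. The workable organization is the transpose of yours, as in the paper and \cite{BM}: classify lattice points by $h=n_1x_2'-n_2x_1'$ (a multiple of $q'/r$ of size $\ll s q_n^{\epsilon}$), bound the $h=0$ contribution by $s^{\epsilon-1}$ via Rankin--Selberg, and for $h\neq0$ run the shifted convolution over the \emph{long} variables $n_i\ll q_n^{1+\epsilon}$ with small coefficients $\ll s$, which is exactly the regime of Proposition~\ref{pr:shifted_conv}.

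There is a second, deeper obstruction to your purely spectral plan. Even after reorganizing, the spectral shifted-convolution bound is of the shape $s^{A}q_n^{2\theta-1/2+\epsilon}$ and is useful only when $s(q_n;b_n)$ is at most a small power of $q_n$; the complementary range $q_n^{\delta}\ll s(q_n;b_n)\ll q_n^{1/2}$ is handled in the paper by the sieve estimate of Lemma~\ref{lm:sieve_est}, which requires the Ramanujan--Petersson conjecture for the two fixed forms (it uses $\vert\lambda_j(n)\vert\leq\tau(n)$). Since Ramanujan--Petersson is unknown for general Maass cusp forms, you cannot run these Weyl-sum bounds over a full spectral basis of $L^{2}(\SL_2(\Z)\backslash\Hb)$; the Kim--Sarnak exponent $\theta=7/64$ is not an adequate substitute in that range, and uniformity in the spectral parameter is not the real issue. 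This is precisely why the paper does not argue spectrally: it lifts the measures to an $S$-arithmetic quotient, classifies weak-star limits via the Einsiedler--Lindenstrauss joinings theorem (shallow case) or Ratner's theorem (depth case, $v_l(q_n)\to\infty$ for $l\in\{37,53\}$) as $(1-c)\mu_{G\times G}+c\mu_{G^{\triangle}}$, and then only needs the Weyl-sum estimates for a \emph{single} specially constructed globally tempered, $S$-super-unramified (CM) newform, for which Ramanujan holds, to force $c=0$; a Chinese Remainder Theorem trick splits general moduli into these two cases. As written, your proposal both fails at the off-diagonal step and, once repaired, runs into this missing Ramanujan input that the ergodic-theoretic part of the paper is designed to circumvent.
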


\begin{rem}
An extension of Theorem~\ref{th:main} to more then one factor along the lines of \cite[Corollary~1.2]{BM} follows from \cite[Corollary~1.5]{EL} as soon as $(q_n,37\cdot 53)=1$ for all $n\in \N$. We expect that this condition can be removed with a little more work, but have not attempted to carry this out here.
\end{rem}

This was proven for sequences where $q_n$ is prime in \cite[Theorem~1.1]{BM}. Once one allows general moduli, it is natural to wonder whether it is possible to restrict $a$ to $(a,q_n)=1$. In this direction we prove the following result.

\begin{theorem}\label{th:main_prim}
There are absolute constants $A,C>0$ so that the following holds. Let $((q_n,b_n))_{n\in \N}$ be a sequence of integers with $(q_n,b_n)=1$ and $s(q_n;b_n)\to\infty$. Fix $\epsilon>0$ and assume that 
\begin{itemize}
	\item $2^{\omega(q_n)}\ll \log(q_n)^{\frac{1}{10}}$ and $q_n^{\epsilon}\ll s(q_n;b_n)$;
\end{itemize}
or
\begin{itemize}
	\item $\min(2^{(2+\epsilon)\omega(q_n)}, \exp((C+2)\log\log(q_n)\log\log\log(q_n))) \ll s(q_n;b_n) \ll q_n^{\frac{1/2-2\theta}{A}-\epsilon}$,
\end{itemize}
where $0\leq \theta<\frac{1}{4}$ is the currently best known bound towards the Ramanujan-Petersson conjecture.\footnote{In view of \cite[Appendix 2, Proposition~2]{Ki} one can take $\theta = \frac{7}{64}$.} Then 
\begin{equation}
	\overline{H}_{q_n,b_n}^{\textrm{pr}} = \left\{ \left(\frac{a+i}{q_n},\frac{ab_n+i}{q_n}\right)\colon a \in \left(\Z/q_n\Z\right)^{\times} \right\} \nonumber
\end{equation}
equidistributes in ${\normalfont \textrm{SL}}_2(\Z)\backslash \Hb \times {\normalfont \textrm{SL}}_2(\Z)\backslash \Hb$ with respect to the uniform probability measure.
\end{theorem}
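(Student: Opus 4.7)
The plan is to deduce Theorem~\ref{th:main_prim} from a quantitative version of Theorem~\ref{th:main} by Möbius inversion. Writing $\mathbf{1}_{(a,q_n)=1}=\sum_{d\mid\gcd(a,q_n)}\mu(d)$ and substituting $a=da'$ gives
\begin{equation}
\sum_{\substack{a\bmod q_n \\ (a,q_n)=1}} f\!\left(\tfrac{a+i}{q_n},\tfrac{ab_n+i}{q_n}\right)=\sum_{d\mid q_n}\mu(d)\underbrace{\sum_{a'\bmod q_n/d} f\!\left(\tfrac{a'+i/d}{q_n/d},\tfrac{a'b_n+i/d}{q_n/d}\right)}_{=:S_d}. \nonumber
\end{equation}
The term $d=1$ is exactly the sum addressed by Theorem~\ref{th:main} and supplies the main term $q_n\int f\,d\mu+o(q_n)$. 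Each $S_d$ with $d>1$ consists of $q_n/d$ values sampled at height $1/q_n$; equivalently, it is the natural $(q_n/d)$-horocycle sum pushed down by the geodesic flow for time $\log d$.

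To treat $S_d$ for $d>1$ I would establish a quantitative version of Theorem~\ref{th:main} whose error term is explicit in $s(q_n;b_n)$, in $q_n$, and in the spectral data of $f$. After spectral decomposition of $f$, the constant part supplies the main term, while the cuspidal and Eisenstein contributions should unfold to shifted convolution sums whose additive phases are indexed by vectors of the lattice $\Lambda_{q_n,b_n}$, so that cancellation is governed by $s(q_n;b_n)$. Combining the Weil bound for Kloosterman sums with the bound $\theta$ toward Ramanujan--Petersson should then produce an estimate
\begin{equation}
S_d=\frac{q_n}{d}\int f\,d\mu+O\!\left(\frac{q_n}{d}\,\eta\bigl(q_n,s(q_n;b_n)\bigr)\right) \nonumber
\end{equation}
uniform in $d\mid q_n$, for a suitable decay function $\eta$.

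Summing over $d\mid q_n$, the total error is of order $\sigma(q_n)\,\eta(q_n,s(q_n;b_n))$ and has to be $o(\varphi(q_n))$. This is where the two regimes of the statement emerge. In the first regime, $q_n^{\epsilon}\ll s(q_n;b_n)$ gives polynomial saving, while $2^{\omega(q_n)}\ll\log(q_n)^{1/10}$ controls $\sigma(q_n)/\varphi(q_n)$ cheaply, so the divisor sum is absorbed. In the second regime, $\omega(q_n)$ may be unrestricted and one instead extracts a sharper spectral saving using $\theta$; the upper bound $s(q_n;b_n)\ll q_n^{(1/2-2\theta)/A-\epsilon}$ pinpoints the range in which this saving remains strong enough to beat $\sigma(q_n)$, and the lower bound on $s(q_n;b_n)$ in terms of $2^{\omega(q_n)}$ is precisely what is needed to overcome the number of divisors.

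The hard part will be the quantitative equidistribution itself: extracting an error term with explicit dependence on $s(q_n;b_n)$ that is uniform over divisors $d\mid q_n$ and over the spectral parameters of $f$. The shifted convolution sums one meets depend on the reduction of $b_n$ modulo $q_n/d$, and the technical heart of the argument is to show that the decay is still governed by $s(q_n;b_n)$ itself, rather than by the potentially much smaller $s(q_n/d;\,b_n\bmod q_n/d)$ that a naive reduction to level $q_n/d$ would suggest.
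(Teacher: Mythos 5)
Your Möbius decomposition $\mathbf{1}_{(a,q_n)=1}=\sum_{d\mid (a,q_n)}\mu(d)$ is, in spirit, the same combinatorial step the paper uses (there it appears on the Fourier side as the Ramanujan-sum identity inside the primitive Weyl sums), but the place where you deploy it creates a genuine gap. Your reduction needs a \emph{quantitative} version of Theorem~\ref{th:main}, with an error term $\eta(q_n,s(q_n;b_n))$ that is moreover uniform over all squarefree divisors $d\mid q_n$. No such statement is available by the methods of this paper: Theorem~\ref{th:main} is proved through the joinings theorem of Einsiedler--Lindenstrauss (resp.\ Ratner's theorem in the depth case), and is therefore ineffective; the analytic input (Lemma~\ref{lm:sieve_est}, Lemma~\ref{lm:shifted_conv_est}) only bounds Weyl sums against a \emph{fixed pair} of Hecke--Maa\ss\ newforms satisfying Ramanujan--Petersson at all places, constructed from a special CM representation. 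Your sketch ``spectrally decompose $f$ and estimate each cuspidal and Eisenstein piece'' amounts to proving effective joint equidistribution of the two horocycles from scratch, with bounds uniform over the whole spectrum; that is far stronger than anything established here and is precisely what the measure-rigidity input is used to avoid. Moreover, the uniform estimate you posit, $S_d=\frac{q_n}{d}\int f\,d\mu+O\bigl(\frac{q_n}{d}\,\eta(q_n,s(q_n;b_n))\bigr)$, is false in general: the pairs occurring in $S_d$ live over the modulus $q_n/d$, and if $s(q_n/d;b_n\bmod q_n/d)$ stays bounded (which is compatible with $s(q_n;b_n)\to\infty$, by \eqref{eq:minimum_bound} only the lower bound $s(d';b)\geq \frac{d'}{q}s(q;b)$ is available), the points of $S_d$ can genuinely accumulate on a Hecke correspondence, so no error governed by $s(q_n;b_n)$ alone can hold uniformly in $d$; for divisors $d$ comparable to $q_n$ the main-term claim fails outright. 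What rescues the actual proof is that such ``bad'' sub-moduli enter with a small weight, and this is exploited \emph{inside} the Weyl-sum estimates: after the Ramanujan-sum expansion one bounds $\sum_{d\mid q}\frac{d\mu(q/d)^2}{\varphi(q)}s(d;b)^{\epsilon-1}$ by splitting at $D=q/s(q;b)$ and using \eqref{eq:minimum_bound}, and it is exactly this step that produces the factors $2^{\omega(q)}$, $\log\log(q)^{C+1}s^{-1/\log\log q}$ and hence the two hypotheses in Theorem~\ref{th:main_prim}. Your heuristic explanation of the two regimes is therefore pointing at the right phenomenon, but your architecture cannot reach it.

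For comparison, the paper does not deduce the primitive case from the non-primitive one at all. It runs the entire argument again for the primitive measures: the $S$-arithmetic lift $\tilde\mu_{q_n,b_n}^{(S_D,N)}$, equidistribution of each factor (Theorem~\ref{th:S-arithmetic_horocycle_prim}) to see that any weak-star limit is a joining, the joinings classification (Proposition~\ref{pr:measure_class_primitive}) or Ratner in the depth case, and then the exclusion of the diagonal part by testing against the special test functions, which leads to the primitive Weyl sums $\mathcal{W}^{\textrm{pr}}$ and to Proposition~\ref{pr:primitive_weyl_sum_est}; combining Theorem~\ref{th_shallow_primitive} and Theorem~\ref{th_depth_primitive} via the Chinese Remainder Theorem trick then gives Theorem~\ref{th:main_prim}. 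If you want to salvage your plan, the honest way is to move your Möbius decomposition inside that framework, i.e.\ into the Weyl sums for the fixed test forms, which is exactly what the paper does.
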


Here $\omega(q_n)$ denotes the number of prime divisors of $q_n$. The technical conditions on the growth of $s(q_n;b_n)$ or $2^{\omega(q_n)}$ in Theorem~\ref{th:main_prim} arise in our treatment of certain Weyl sum. It seems reasonable to believe that the statement holds without them. However, at this point it is unclear to us how one could remove the conditions completely.

\subsection{The methods}

Let us start by briefly explaining the argument from \cite{BM}, which beautifully combines analytic estimates for Weyl sums with the joinings theorem of Einsiedler and Lindenstrauss. Let us describe very briefly describe the idea, for a more detailed summary we refer to \cite[Section~1.2]{BM}. Instead of considering the measures supported on $\overline{H}_{q_n,b_n}$ directly one lifts them to measures $\mu^S_{q_n,b_n}$ on a suitable $S$-arithmetic quotient, where the places in $S$ are co-prime to all $q_n$. In this setting the measure classification theorem of Einsiedler and Lindenstrauss becomes applicable. As a result one can write a limit points of the sequence $(\mu_{q_n,b_n}^S)_{n\in \N}$ as a convex combination of the uniform measure and a collection of diagonal measures. To exclude the diagonal contribution one tests the measures against carefully constructed test-functions. This leads to a classical Weyl sums, which are estimated using tools from analytic number theory. Ultimately one shows that the Weyl sums become arbitrary small as $n\to \infty$ and this rules out the possibility of mass on shifted diagonals.

While the part of the argument that relies on measure classification goes through as soon as there are two fixed primes $l_1, l_2$ with $(q_n,l_1l_2)=1$, one has to work harder to show the corresponding estimates for Weyl sums. Nonetheless, one can establish the following result.

\begin{theorem}\label{th_shallow}
Let $(q_n,b_n)$ be a sequence of pairs such that $s(b_n;q_n)\to\infty$ as $n\to \infty$ and let $N_0\mid (37\cdot 53)^{\infty}$ be fixed. Furthermore, assume that $(q_n,37\cdot 53)=1$ for all $n\in \N$. Then we have
\begin{multline}
	\lim_{n\to\infty} \frac{1}{q_n}\sum_{a\text{ mod } q_n} f\left(\frac{1}{\sqrt{q_n}}\left(\begin{matrix}1 & a\\0&q_n\end{matrix}\right),\frac{1}{\sqrt{q_n}}\left(\begin{matrix}1 & b_na\\0&q_n\end{matrix}\right)\right) \\
	= \frac{1}{\Vol(\Gamma_1(N_0)\backslash \SL_2(\R))^2}\int_{\Gamma_1(N_0)\backslash \SL_2(\R)}\int_{\Gamma_1(N_0)\backslash \SL_2(\R)} f(g_1,g_2)dg_1dg_2, \nonumber
\end{multline} 
for all $f\in \mathcal{C}_c^{\infty}(\Gamma_1(N_0)\backslash \SL_2(\R) \times \Gamma_1(N_0)\backslash \SL_2(\R))$.
\end{theorem}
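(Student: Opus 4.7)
The strategy follows the blueprint sketched in Section~1.3, combining measure classification in the style of Einsiedler--Lindenstrauss with analytic estimates for Weyl sums. The primes $\ell_1 = 37$ and $\ell_2 = 53$ will serve as auxiliary primes at which the relevant measures enjoy the recurrence needed to apply the joinings theorem, and this is the reason for the hypothesis $(q_n, 37\cdot 53)=1$.

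First I would lift the discrete measures on $\Gamma_1(N_0)\backslash \SL_2(\R) \times \Gamma_1(N_0)\backslash \SL_2(\R)$ to probability measures $\mu_{q_n,b_n}^S$ on an $S$-arithmetic quotient $X_S \times X_S$ with $S = \{\infty,\ell_1,\ell_2\}$. Since $(q_n,\ell_1\ell_2)=1$, the matrix $\smat{1}{a}{0}{q_n}$ defines a canonical element of $\SL_2(\Q_{\ell_j})$ landing in the standard maximal compact at each $\ell_j$, giving a well-defined lift. Combining non-divergence for unipotent orbits with the marginal equidistribution of Theorem~\ref{th:S-arithmetic_horocycle}, any weak-$*$ limit $\mu$ of $(\mu_{q_n,b_n}^S)_n$ is a probability measure, invariant under the diagonal unipotent flow at $\infty$. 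Applying the joinings theorem of Einsiedler--Lindenstrauss --- applicable because both marginals are Haar and one has recurrence at two distinct primes --- then yields a decomposition
\begin{equation}
	\mu = \alpha\cdot m_{X_S\times X_S} + \sum_{h\neq e}\beta_h\cdot m_{\Delta_h}, \nonumber
\end{equation}
where each $m_{\Delta_h}$ is supported on a shifted diagonal corresponding to a Hecke correspondence between the two factors. It remains to show $\beta_h = 0$ for every $h\neq e$.

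To rule out the diagonal pieces I would test against $f = \varphi\otimes\overline{\psi}$ for Hecke--Maass eigenforms $\varphi,\psi$ of level dividing $N_0$ that are not Hecke-related, reducing the task to proving
\begin{equation}
	W_n(\varphi,\psi) = \frac{1}{q_n}\sum_{a\bmod q_n}\varphi\Big(\tfrac{a+i}{q_n}\Big)\overline{\psi\Big(\tfrac{b_n a+i}{q_n}\Big)}\longrightarrow 0. \nonumber
\end{equation}
Inserting the Fourier expansions of $\varphi,\psi$ at $\infty$ and using additive orthogonality modulo $q_n$ turns $W_n(\varphi,\psi)$ into a sum indexed by pairs $(n_1,n_2)\in\Lambda_{q_n,b_n}$. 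The hypothesis $s(q_n;b_n)\to\infty$ guarantees that every nonzero vector in $\Lambda_{q_n,b_n}$ eventually has large norm, so the degenerate diagonal contributions disappear and the remaining sum is genuinely of shifted-convolution type.

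The main obstacle is the uniform estimation of these shifted convolution sums for composite $q_n$. In \cite{BM} the primality of $q_n$ allows a clean application of Kuznetsov's formula together with spectral large-sieve estimates with sharp divisor control; for general $q_n$ one loses factors of the form $2^{\omega(q_n)}$ or divisor-type quantities at every step, and one must contend with the possibility of many small prime factors. Overcoming this likely requires either an amplification scheme exploiting the fixed primes $\ell_1,\ell_2$, or a delta-method approach that separates the modulus additively and absorbs the polynomial losses via the power saving in the resulting Kloosterman-type sums. Once $W_n(\varphi,\psi)\to 0$ is established for all non-Hecke-related pairs, the coefficients $\beta_h$ for $h\neq e$ all vanish, $\mu$ reduces to the Haar measure, and projecting back to the archimedean quotient yields the theorem.
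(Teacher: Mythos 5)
Your overall skeleton (lift to an $S$-arithmetic quotient at $S=\{\infty,37,53\}$, apply the Einsiedler--Lindenstrauss joinings theorem, rule out diagonal mass by Weyl sums) is the same as the paper's, but there are two genuine gaps in the execution. First, the step that rules out the diagonal components is set up incorrectly. In the joinings classification the non-Haar part is $\int_{G(\Q_{S_D})}\mu_{G,h}\,d\lambda(h)$, i.e.\ a continuum of diagonals shifted by arbitrary $h\in G(\Q_{S_D})$ (including archimedean shifts) weighted by an unknown probability measure $\lambda$ --- not a countable family of Hecke correspondences indexed by $h\neq e$ --- and the unshifted diagonal $h=e$ must be excluded as well. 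Testing against $\varphi\otimes\overline{\psi}$ with $\varphi,\psi$ \emph{not} Hecke-related cannot do this: such forms are orthogonal, so the integral of $\varphi\overline{\psi}$ over the plain diagonal vanishes just as it does for the product measure, and no contradiction with $c>0$ can be extracted. The paper instead tests with a \emph{single} representation in both factors, taking $\varphi=[\lambda_R\ast\varphi^{\circ}]\otimes\overline{\varphi^{\circ}}$ and proving a positivity lemma (Lemma~\ref{lm:positivity}); for this one needs $\pi$ to be globally tempered and super-unramified at $S$, whose existence at $37\cdot 53$ is itself a nontrivial input (Lemma~\ref{lm:existence}) and is the reason these particular primes appear. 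The shifts $h\in B_R$ are then what produce the extra parameters $x_0,y_0,r_0$ and the twist $d\overline{b}_n$ in the generalized Weyl sums (Lemma~\ref{lm:Weyl_sum}); your $W_n(\varphi,\psi)$ ignores them, and temperedness of the test form is exactly what the analytic estimates require.

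Second, the analytic heart of the theorem --- bounding these Weyl sums for composite $q_n$ --- is left as speculation (``amplification \ldots or a delta-method approach''), whereas it is precisely where the new work lies. The paper proves Proposition~\ref{pr:normal_weyl_sum_est} by splitting according to the size of $s(q;b)$: when $s(q;b)$ is large, a sieve estimate in the spirit of \cite[Corollary~3.1]{BM} handles the lattice count with losses only of size $\log\log q$ powers; when $s(q;b)$ is small, one reduces via the reduced basis of $\Lambda_{q,b}$ to a shifted convolution problem and applies the Kuznetsov formula with a spectral large sieve, and here the key technical point for general moduli is that the coprimality condition $(d,h/d)=1$ from \cite{Bl} must be dropped, which is the content of Proposition~\ref{pr:shifted_conv} in the appendix. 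Without supplying these estimates (or the positivity mechanism above), the proposal is an outline of the strategy rather than a proof.
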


This covers already a wide range of (sequences of) moduli $q_n$. However, for example the sequence
\begin{equation}
	q_n = p_1\cdots p_n, \nonumber
\end{equation}
where $p_k$ is the $k$th prime number can not be treated directly using this method. This can be resolved with a trick. Indeed, we can free up the places $37$ and $53$ using the Chinese Remainder Theorem and apply equidistribution of the remaining sequence. To execute this strategy it turns out to be crucial that we have the freedom of choosing a (fixed) congruence subgroup $\Gamma_1(N_0)$ in Theorem~\ref{th_shallow} and that the result therein is stated for (quotients of) $\SL_2(\R)$ and for $\Hb$. This works as long as the exponents of $37$ and $53$ stay bounded in the sequence $q_n$.

Even with this trick we can not cover all moduli $q_n$. Indeed, the sequence
\begin{equation}
	q_n=n! \nonumber
\end{equation}
can not be handled using these techniques. The reason is that we are not able to free up two fixed primes and can therefore not hope to apply the joinings theorem. The key to handling this case is the observation that as $n\to\infty$ arbitrary large powers of $37$ (or of $53$) divide the moduli. This allows us to establish invariance by a unipotent subgroup and to apply Ratner's measure classification result directly. This can be combined with an appropriate Weyl sum estimate as explained earlier and leads to the following result.

\begin{theorem}\label{th_depth}
Let $(q_n,b_n)$ be a sequence of pairs with $(q_n,b_n)=1$ and $s(q_n;b_n)\to\infty$ as $n\to \infty$. Suppose that there is $l\in \{37,53\}$ such that $v_l(q_n)\to\infty$ as $n\to\infty$. Then we have
\begin{multline}
	\lim_{n\to\infty} \frac{1}{q_n}\sum_{a\text{ mod } q_n} f\left(\frac{1}{\sqrt{q_n}}\left(\begin{matrix}1 & a\\0&q_n\end{matrix}\right),\frac{1}{\sqrt{q_n}}\left(\begin{matrix}1 & b_na\\0&q_n\end{matrix}\right)\right) \\ = \frac{9}{\pi^2}\int_{\SL_2(\Z)\backslash \Hb}\int_{\SL_2(\Z)\backslash \Hb} f(g_1,g_2)dg_1dg_2, \nonumber
\end{multline} 
for all $f\in \mathcal{C}_c^{\infty}(\SL_2(\Z)\backslash \Hb \times \SL_2(\Z)\backslash \Hb)$.
\end{theorem}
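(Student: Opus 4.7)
My plan replaces the joinings theorem used for Theorem~\ref{th_shallow} with a direct application of Ratner's measure classification, made possible by the extra hypothesis $v_l(q_n)\to\infty$ for some $l\in\{37,53\}$. The idea is that this hypothesis produces a genuine one-parameter $l$-adic unipotent subgroup under which any weak-$*$ limit of the lifted empirical measures is invariant, at which point the classification of unipotent-invariant measures on an $\SL_2\times\SL_2$ quotient gives either the desired answer or a very constrained list of alternatives that can be excluded by Weyl sums.

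More precisely, let $S=\{\infty,l\}$ and set $Y_S=(\Gamma_S\bs G_S)^2$ with $G_S=\SL_2(\R)\times \SL_2(\Q_l)$ and $\Gamma_S=\SL_2(\Z[1/l])$. Following the template of \cite{BM}, I would lift the empirical measure $\mu_n$ to a measure $\tilde\mu_n$ on $Y_S$ whose $l$-adic coordinates encode the shifts $a/q_n$ and $b_n a/q_n$. The key observation is that, for $k\leq v_l(q_n)$, adding $q_n/l^k$ to $a$ corresponds to acting by an element of $\Gamma_S$, so $\tilde\mu_n$ is invariant under an $l$-adic cyclic unipotent subgroup of order $l^{v_l(q_n)}$, diagonally embedded with a twist by $b_n$. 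Since $v_l(q_n)\to\infty$ and $(b_n,q_n)=1$ forces $b_n\in\Z_l^{\times}$, after passing to a subsequence along which $b_n\to b_\infty\in\Z_l^{\times}$ $l$-adically, any weak-$*$ accumulation point $\tilde\mu_\infty$ becomes invariant under the full one-parameter unipotent $\tilde U_l=\{(u_l(t),u_l(b_\infty t)):t\in \Q_l\}$.

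With unipotent invariance in hand, Ratner's theorem in the $S$-arithmetic form of Margulis--Tomanov classifies ergodic components of $\tilde\mu_\infty$ as algebraic measures supported on closed orbits of intermediate subgroups containing $\tilde U_l$. After conjugating away the $b_\infty$-twist by a torus element at the $l$-adic place, a standard subgroup analysis in $\SL_2\times\SL_2$ reduces the possibilities to the Haar measure on $Y_S$ — the desired conclusion — and measures supported on Hecke-type correspondences, that is, closed orbits of conjugates of the diagonal $\SL_2$. To exclude the correspondence measures I would test $\tilde\mu_n$ against tensor products $\psi_1\otimes \psi_2$ of Hecke--Maass cusp forms and incomplete Eisenstein series orthogonal to constants. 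Mass on a correspondence forces a nonvanishing limit for shifted convolution type Weyl sums, and the task reduces to showing that such sums tend to zero under the hypothesis $s(q_n;b_n)\to\infty$.

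The main obstacle I anticipate is the Weyl sum estimate for general composite moduli. The argument in \cite{BM} for prime $q$ exploits primality in a crucial way via Kuznetsov; for composite $q_n$ one must dissect the sums according to divisors of $q_n$ and bound all contributions in terms of $s(q_n;b_n)$, which is the only parameter that quantifies genuine non-diagonality of the pair $(1,b_n)$ modulo factors of $q_n$. A secondary technical point is making rigorous the convergence $b_n\to b_\infty$ in the second paragraph, since the invariance subgroup varies with $n$; this is handled by the torus conjugation at the $l$-adic place that normalizes $\tilde U_l$ to the standard diagonal unipotent, after which the classification and the Weyl sum step proceed uniformly in the subsequence.
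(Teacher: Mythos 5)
Your proposal follows essentially the same route as the paper's proof: lift to the $S=\{\infty,l\}$-arithmetic quotient, use $v_l(q_n)\to\infty$ together with an $l$-adically convergent subsequence $b_n\to b\in\Z_l^{\times}$ to make any weak-$*$ limit invariant under the twisted unipotent $\{(u_z,u_{bz})\colon z\in\Q_l\}$, invoke Ratner's $S$-arithmetic measure classification to write the limit as Haar plus translated diagonal components, and exclude the diagonal part by Weyl-sum estimates that only need $s(q_n;b_n)\to\infty$. The only place the paper is more specific than your sketch is the testing step: it tests against a single globally tempered, $S_l$-super-unramified cuspidal representation (whose known existence is exactly why $l\in\{37,53\}$ appears in the hypothesis), both to get positivity on the correspondence measures and to have the Ramanujan bound required by the Weyl-sum lemmas, and the resulting Weyl sums need the extra scaling parameter $l^{t(g)}$ built into the generalized definition \eqref{eq:def_Weyl_sum}.
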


Together Theorem~\ref{th_shallow} and Theorem~\ref{th_depth} cover all possible sequences $q_n$ and lead to Theorem~\ref{th:main}. Note that all our results are ineffective. The reason for this is on the one hand our use of the joinings theorem of Einsiedler and Lindenstrauss. On the other hand, we also loose effectiveness when distinguishing sequences according to their behaviour at the places $37$ and $53$.

The strategy for proving Theorem~\ref{th:main_prim} is very similar. We first establish a result for sequences that stay away from $37$ and $53$. See Theorem~\ref{th_shallow_primitive} below. This is combined with Theorem~\ref{th_depth_primitive}, which handles the orthogonal situation.

\section{Notation and Set-up}

\subsection{Some remarks on lattices}\label{sec:lattice}

Let $\Lambda$ be a sub-lattice of $\Z^2$. Then we write $s(\Lambda)$ for its minimum. A key role in this paper is played by the specific lattices
\begin{equation}
	\Lambda_{q,b} = \{ (n_1,n_2)\in\mathbb{Z}^2\colon n_1+bn_2 \equiv 0\text{ mod }q \}. \label{eq:def_lattice}
\end{equation}
To shorten notation we write $s(q;b) = s(\Lambda_{q,b})$. 

Note that if we assume $(b,q)= 1$, then $\Lambda_{q, b}$ has the basis
\begin{equation}
	\left(\begin{matrix} -b \\ 1 \end{matrix}\right), \left(\begin{matrix} q \\ 0\end{matrix}\right).
\end{equation}
Thus, in this case the (co)-volume of $\Lambda_{q, b}$ is $q$ and we observe that
\begin{equation}
	0<s(q;b) = s(q;-b)\ll q^{\frac{1}{2}}. \nonumber
\end{equation}

We start by recording some properties of the lattices $\Lambda_{q, b}$. Note that we will explicitly state when the assumption $(b,q)=1$ is used.

\begin{lemmy}\label{lm:prop_reduced_bas}
There is a reduced basis
\begin{equation}
	\left(\begin{matrix} x_1 \\ x_2 \end{matrix}\right), \left(\begin{matrix} y_1 \\ y_2\end{matrix}\right). \nonumber
\end{equation}
of $\Lambda_{q,b}$, were $(x_1,x_2)^{\top}$ is of minimal length. In particular we have $\vert x_1\vert+\vert x_2\vert \ll s(q;b)$. Let $r=(x_1,x_2)$ and write $r=r_1r_2$ with $(r_1,r_2)=(r_1,x_1/r)=(r_2,x_2/r)=1$, then we can reformulate the condition $(n_1,n_2) \in \Lambda_{q,b}$ as $n_1x_2/r - n_2x_1/r =h\cdot q/r$ and $n_i\equiv hy_i \text{ mod }r_i$ for $h\in \Z$ and $i=1,2$.
\end{lemmy}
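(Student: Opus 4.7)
The plan is to first invoke Gauss/Lagrange reduction. For a rank two lattice this produces a basis $(x_1,x_2)^{\top}$, $(y_1,y_2)^{\top}$ in which the first vector attains the minimum $s(q;b)$; in dimension two the two successive minima are always achieved simultaneously by a basis, so such a reduction exists. The estimate $|x_1|+|x_2|\ll s(q;b)$ is then just equivalence of the $\ell^1$ and $\ell^2$ norms on $\R^2$.

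For the forward direction of the reformulation, I would use that $(b,q)=1$ forces the co-volume of $\Lambda_{q,b}$ to be $q$, so $x_1y_2-x_2y_1=\pm q$; after flipping a sign of $(y_1,y_2)^{\top}$ if necessary, I may assume this determinant equals $q$. Writing $(n_1,n_2)=g(x_1,x_2)+h(y_1,y_2)$ uniquely with $g,h\in\Z$, a direct computation gives $n_1x_2-n_2x_1=h(y_1x_2-y_2x_1)=-hq$. Dividing by $r$ (and absorbing the sign into $h$, which only permutes the parameterization) produces the advertised Diophantine equation $n_1x_2/r-n_2x_1/r=hq/r$. The congruences follow from $n_i=gx_i+hy_i\equiv hy_i\pmod{x_i}$ together with $r_i\mid r\mid x_i$.

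For the converse, suppose $h\in\Z$ and $(n_1,n_2)$ satisfy both conditions, and set $a=n_1-hy_1$, $c=n_2-hy_2$. I need to produce $g\in\Z$ with $a=gx_1$ and $c=gx_2$. Substituting into the Diophantine equation yields $ax_2=cx_1$, and coprimality of $x_1/r$ and $x_2/r$ then forces $a=(x_1/r)a'$ and $c=(x_2/r)a'$ for some $a'\in\Z$. The two congruences translate to $r_1\mid a$ and $r_2\mid c$; combined with the hypotheses $(r_1,x_1/r)=(r_2,x_2/r)=1$ they upgrade to $r_1\mid a'$ and $r_2\mid a'$, and since $(r_1,r_2)=1$ this gives $r\mid a'$, so $g:=a'/r\in\Z$ is the desired integer. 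The only mildly delicate step is this last one, where the prescribed decomposition $r=r_1r_2$ with its coprimality conditions is engineered precisely so that a single Diophantine equation and two congruences of different moduli together force integrality of $g$.
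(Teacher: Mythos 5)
Your argument follows essentially the same route as the paper's proof: a reduced basis via reduction theory, expansion of $(n_1,n_2)$ in that basis, the covolume computation $x_1y_2-x_2y_1=\pm q$ to get the Diophantine equation, and the coprimality conditions on $r_1,r_2$ to handle the congruences; the only added value is that you spell out the converse, which the paper dismisses with ``it is clear that this argument is reversible.'' One bookkeeping point should be fixed: you normalize the orientation so that $x_1y_2-x_2y_1=q$, but then ``absorbing the sign into $h$'' in the equation $n_1x_2-n_2x_1=-hq$ is incompatible with keeping the same $h$ in the congruences $n_i\equiv hy_i \pmod{r_i}$, and in your converse step the same mismatch gives $ax_2-cx_1=2hq$ rather than $0$. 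The remedy is simply to flip $(y_1,y_2)^{\top}$ the other way, i.e.\ normalize $y_1x_2-y_2x_1=q$ (so $x_1y_2-x_2y_1=-q$); with that choice the same $h$ serves in both the equation and the congruences, your identity $ax_2=cx_1$ holds, and the rest of your gcd argument (using $(x_1/r,x_2/r)=1$, $(r_i,x_i/r)=1$ and $(r_1,r_2)=1$ to force $r\mid a'$) is correct and completes the proof exactly as intended.
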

\begin{proof}
The existence of a reduced basis with $\vert x_1\vert + \vert x_2\vert \ll s(q;b)$ follows from the geometry of numbers. See \cite[Lecture~X]{Si}. 

To see the second claim we take $(n_1,n_2)^{\top}\in \Lambda_{q,b}$ and write
\begin{equation}
	\left(\begin{matrix} n_1 \\ n_2 \end{matrix}\right)= r\cdot \left(\begin{matrix} x_1 \\ x_2 \end{matrix}\right) + h\cdot  \left(\begin{matrix} y_1 \\ y_2\end{matrix}\right),\nonumber
\end{equation}
for uniquely determined $r,h\in \Z$. After recalling that $\Lambda_{q,b}$ has co-volume $q$ we obtain
\begin{equation}
	x_2n_1-x_1n_2 = h\cdot(y_1x_2-y_2x_1) = h\cdot q. \nonumber
\end{equation}
Dividing by $r$ gives the first condition. The congruence conditions modulo $r_1$ and $r_2$ respectively follow immediately. It is clear that this argument is reversible.
\end{proof}

\begin{rem}\label{rm:Wieser}
Note that, if the minimal vector $(x_1,x_2)^{\top}$ of $\Lambda_{q,b}$ is primitive, then $(n_1,n_2)\in \Lambda_{q,b}$ can be detected by the rather simple congruence $n_1x_2\equiv n_2x_1 \text{ mod }q$. While for prime $q$ the minimal vector is automatically primitive, this is not the case for general $q$. The following nice example was communicated to us by Andreas Wieser and Zuo Lin. For $n\in \N$ we take $q=3^{4n}$ and $b=1+3^{3n}$, then the minimal vector is $(-3^n,3^n)$. One checks that the smallest primitive vector is much larger.  
\end{rem}

\begin{lemmy}
For $d\mid q$ we have
\begin{equation}
	s(q;b) \leq \frac{q}{d}s(d;b). \label{eq:minimum_bound}
\end{equation}
\end{lemmy}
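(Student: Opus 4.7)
The plan is to produce an explicit short vector of $\Lambda_{q,b}$ by scaling a minimal vector of $\Lambda_{d,b}$. First I would observe that because $d\mid q$, the congruence $n_1+bn_2\equiv 0\ (\mathrm{mod}\ q)$ implies the weaker congruence modulo $d$, so there is a chain of inclusions $\Lambda_{q,b}\subseteq \Lambda_{d,b}\subseteq \Z^2$. This alone goes in the wrong direction for a norm bound (a sublattice only has larger minima), so the content is to use the divisor $d$ to upgrade a short vector of $\Lambda_{d,b}$ to a vector of $\Lambda_{q,b}$.

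Concretely, let $(m_1,m_2)^\top\in \Lambda_{d,b}$ realize the minimum $s(d;b)$, so that $m_1+bm_2=kd$ for some $k\in\Z$. Multiplying coordinatewise by the integer $q/d$ gives $(q/d)m_1+b(q/d)m_2=k\cdot q\equiv 0\ (\mathrm{mod}\ q)$, hence $\bigl((q/d)m_1,(q/d)m_2\bigr)^\top\in \Lambda_{q,b}$. Its Euclidean length is exactly $(q/d)\,s(d;b)$, so
\begin{equation}
s(q;b)\leq \frac{q}{d}\,s(d;b),\nonumber
\end{equation}
as claimed. No appeal to the geometry of numbers or to the reduced basis of Lemma~\ref{lm:prop_reduced_bas} is needed; the whole argument is a one-line scaling.

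There is no real obstacle here: the only thing to verify is that the scaling factor $q/d$ is an integer, which is exactly the hypothesis $d\mid q$, and that scaling preserves membership in the lattice defined by the linear congruence. I would state this proof in two or three lines immediately after the lemma, without invoking the hypothesis $(b,q)=1$, since the argument is valid for arbitrary $b$.
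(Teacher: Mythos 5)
Your proposal is correct and is essentially identical to the paper's proof: both take a minimal vector of $\Lambda_{d,b}$ and scale it by the integer $q/d$ to produce a vector of $\Lambda_{q,b}$ of length $\frac{q}{d}s(d;b)$. Nothing further is needed.
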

\begin{proof}
We take a minimal vector $\mathbf{x}=(x_1,x_2)$ for $\Lambda_{d,b}$, so that $s(d;b) = \Vert \mathbf{x}\Vert$. Now we observe that $(\frac{q}{d}x_1,\frac{q}{d}x_2)\in \Lambda_{q,b}$. So that $s(q;b) \leq \frac{q}{d} \Vert \mathbf{x}\Vert = \frac{q}{d}s(d;b)$ as desired.
\end{proof}

Note that \eqref{eq:minimum_bound} is trivial for $d\leq \sqrt{q}$. However, we expect it to be quite good when $d$ is close $q$.

\begin{rem}\label{rm:pass_to_subseq}
The lemma above will be useful in the following situation. Suppose we have a sequence $\{(q_n,b_n)\}_{n\in \N}$ of tuples with $(q_n,b_n)=1$. Further, we assume that we can write $q_n=q_0q_n'$ with $(q_0,q_n')=1$ for all $n\in \mathbb{N}$. Then $s(q_n;b_n)\to \infty$ implies that $s(q_n';b_n)\to\infty$.
\end{rem}

For later reference we also record 
\begin{lemmy}\label{lm:comparison_s}
Let $(b,q)=1$. Then we have
\begin{equation}
	s(q;fb) \asymp_f s(q;d),
\end{equation}
for all $f\in \N$. If $\overline{b}$ denotes the modular inverse of $b$ modulo $q$, then
\begin{equation}
	s(q;b)=s(q;\overline{b}). \nonumber
\end{equation}
\end{lemmy}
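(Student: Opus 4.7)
Both statements should follow from elementary manipulations of the defining congruences for $\Lambda_{q,b}$, and I expect no serious obstacle. The plan is to exhibit explicit $\Z$-linear maps between the relevant lattices whose operator norms (and whose inverses' operator norms) are bounded in terms of $f$ alone, and to use a coordinate swap for the second claim.

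For the first assertion $s(q;fb) \asymp_f s(q;b)$, I would consider the coordinate rescalings $T\colon (n_1,n_2) \mapsto (n_1, fn_2)$ and $T'\colon (n_1,n_2) \mapsto (fn_1, n_2)$. Since $n_1 + fbn_2 = n_1 + b(fn_2)$, the map $T$ sends $\Lambda_{q,fb}$ into $\Lambda_{q,b}$; and since any $(n_1,n_2)\in \Lambda_{q,b}$ satisfies $f(n_1+bn_2) \equiv 0 \Mod{q}$, the map $T'$ sends $\Lambda_{q,b}$ into $\Lambda_{q,fb}$. Both maps increase Euclidean length by at most a factor of $f$, which yields $s(q;b) \leq f\cdot s(q;fb)$ and $s(q;fb) \leq f\cdot s(q;b)$ respectively. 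Note that this first assertion does not in fact require $(b,q)=1$; the coprimality hypothesis is only needed to make sense of $\overline{b}$ in the second part.

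For the second assertion $s(q;b)=s(q;\overline{b})$, the key observation is that multiplying the defining congruence by $\overline{b}$ interchanges the roles of the two coordinates. Explicitly, $(n_1,n_2) \in \Lambda_{q,b}$ iff $n_1 + bn_2 \equiv 0 \Mod{q}$ iff $\overline{b}n_1 + n_2 \equiv 0 \Mod{q}$ iff $(n_2,n_1) \in \Lambda_{q,\overline{b}}$. Since the coordinate swap $(n_1,n_2) \mapsto (n_2,n_1)$ is an isometry of $\Z^2$, the minimal vectors correspond bijectively and the first successive minima agree. The whole argument is bookkeeping; the only point to track is the factor $f$ arising in the operator norms, which directly produces the implicit constants hidden in $\asymp_f$.
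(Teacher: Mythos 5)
Your argument is correct, and it is in fact cleaner than the paper's. The direction $s(q;b)\leq f\,s(q;fb)$ via $(n_1,n_2)\mapsto(n_1,fn_2)$ is exactly what the paper does, but for the reverse inequality the paper splits into the cases $(f,q)=1$ and $f'=(f,q)\neq 1$: the coprime case is quoted from \cite[Lemma~4.1]{BM}, and in the non-coprime case it is observed that $f'\mid n_1$ for $(n_1,n_2)\in\Lambda_{q,fb}$, which lets one pass to the lattice $\Lambda_{q/f',(f/f')b}$ and then reduce to the coprime case together with the trivial bound $s(q/f';b)\leq s(q;b)$. Your single map $(n_1,n_2)\mapsto(fn_1,n_2)$, justified by multiplying the congruence $n_1+bn_2\equiv 0\ (\mathrm{mod}\ q)$ by $f$, sends $\Lambda_{q,b}$ into $\Lambda_{q,fb}$ and yields $s(q;fb)\leq f\,s(q;b)$ in one stroke, with explicit constants $f^{\pm 1}$, no case distinction, and no appeal to \cite{BM}; your remark that this half needs no coprimality of $b$ and $q$ is also a genuine (small) gain, consistent with the paper's stated policy of flagging where $(b,q)=1$ is used. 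Likewise, your coordinate-swap proof of $s(q;b)=s(q;\overline{b})$ (multiply the congruence by $\overline{b}$ and note the swap is an isometry of $\Z^2$) is self-contained, whereas the paper simply cites \cite[Lemma~4.1]{BM}. The only trade-off is that the paper's detour through $\Lambda_{q/f',(f/f')b}$ exposes the divisibility structure $f'\mid n_1$, which is of the same flavour as manipulations used elsewhere in the paper, but nothing in the lemma itself requires that extra information.
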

\begin{proof}
We first record that, as long as $(f,q)=1$, we can apply \cite[Lemma~4.1]{BM} and get
\begin{equation}
	d^{-1}s(q;b)\leq \min(s(q;bd),s(q;b\overline{d})) \leq \max(s(q;bd),s(q;b\overline{d})) \leq ds(q;b) \label{eq:co-prime_stuff}
\end{equation}
and $s(q;b)=s(q;\overline{b})$.

We turn to the case when $f'=(f,q) \neq 1$. Consider the map 
\begin{equation}
	\Lambda_{q,fb} \ni (n_1,n_2) \mapsto (n_1,fn_2) \in \Lambda_{q,b}.\nonumber
\end{equation}
This implies $s(q;b) \leq f\cdot s(q;fb)$ giving the desired lower bound for $s(q;fb)$. On the other hand we observe that if $(n_1,n_2)\in \Lambda_{q,fb}$, then $f'\mid n_1$. This allows us to define the map
\begin{equation}
	\Lambda_{q,fb}\ni (n_1,n_2) \mapsto (n_1/f',n_2)\in \Lambda_{q/f',f/f'b}. \nonumber
\end{equation}
This implies that
\begin{equation}
	s(q;fb) \leq s(\frac{q}{f'};\frac{fb}{f'}) \leq \frac{f}{f'}s(\frac{q}{f'};b).\nonumber
\end{equation}
Note that in the last step we have used \eqref{eq:co-prime_stuff}. By trivially estimatting $s(q/f';b) \leq s(q;b)$ we are done.
\end{proof}

Recall that the main condition of our theorem is that our sequence $(q_n,b_n)$ satisfies $s(q_n;b_n)\to\infty$. If $q_n$ is a sequence of primes, then it was pointed out in \cite[Remark~1.3]{BM} that this condition means that $\frac{b_n}{q_n}$ stays far from rational numbers with small denominators. We end this section with a characterization of this condition in an orthogonal situation.

\begin{rem}\label{rem:p_adic_s}
Let $b\in \Z_p^{\times}$ and consider the sequence $(q_n,b_n)$ with $q_n=p^n$ and $b_n=b\text{ mod }q_n$. We claim that $s(q_n;b_n)\to\infty$ if and only if $b\not \in \Z_p^{\times}\cap \Q$. 

First, suppose $b=\frac{r}{s}\in\Q$ with $(r,s)=1$. Write $\frac{r}{s}=b= b_n+q_nb_n'$. Then we have
\begin{equation*}
	\vert r-b_ns\vert_p \leq \max(\vert r-bs\vert_p,\vert q_nb_n's\vert_p) \leq q_n^{-1}.
\end{equation*}
In particular, we have $(r,-s)\in \Lambda_{q_n,b_n}$ so that $s(q_n;b_n)$ is bounded. 

On the other hand, suppose that $s(q_n;b_n)\leq R$. Not that there are only finitely many tuples of integers in a ball of radius $R$. Thus there is $\mathbf{m}=(m_1,m_1)\in \Z^2$ with $\Vert \mathbf{m}\Vert\leq R$ and a subsequence $(n_i)_{i\in \N}$ such that
\begin{equation*}
	\vert m_1 + b_{n_i}m_2 \vert_p \leq q_{n_i}^{-1}
\end{equation*}
for all $i\in \N$. After excluding situations when $0\in \{m_1,m_2\}$ we can put $t=\frac{-m_1}{m_2}\in \Q^{\times}$ and $r=v_p(m_2)$. We have
\begin{equation*}
	\vert t-b\vert_p \leq \max(q_{n_i}^{-1},\vert t-b_{n_i}\vert_p) = \max(q_{n_i}^{-1},\vert m_1+b_{n_i}m_2\vert_p\vert m_2\vert_p^{-1}) \leq p^{r-n_i},
\end{equation*}
for all $i\in \N$. We conclude that $b=t$ is rational as desired.
\end{rem}

\subsection{The $S$-arithmetic setting}

We now recall some notation from \cite[Section~5.1]{BM}, which allows us to work on $S$-arithmetic quotients. 

Given a place $v$ of $\Q$ we write $\Q_v$ for the corresponding completion. If $v=p$ is a finite place, then let $\Z_p\subseteq \Q_p$ denote the closure of $\Z$. Let $\Af$ denote the finite adeles
\begin{equation}
	\Af = {\prod_p}' \Q_p.
\end{equation}
The adeles are then defined as $\A=\R\times \Af$.

Given an integer $D\in \N$ we define the finite set of places
\begin{equation}
	S_D=\{\infty\}\cup \{ p\mid D\}. \label{eq:def_SD}
\end{equation}
In case the integer $D$ is clearly determined by the context we drop it from the notation and simply write $S=S_D$. We introduce the spaces
\begin{equation}
	\Q_S = \prod_{v\in S}\Q_v,\, \Z_S=\prod_{v\in S}\Z_p \text{ and }\Z[1/S]=\Z\left[\prod_{p\in S}\frac{1}{p}\right]. \nonumber
\end{equation}
On the other hand we have
\begin{equation}
	\A^{(S)}={\prod_{v\not\in S}}'\Q_v \text{ and }\Z^{(S)}=\prod_{p\not\in S}\Z_p. \nonumber
\end{equation}

To shorten notation we write $G=\textrm{PGL}_2$. We fix the maximal compact subgroup $K_{\infty}=\textrm{PSO}_2(\R) \subseteq G(\R)$. At finite place $v=p$ we write $K_p=G(\Z_p)$ and consider the congruence subgroup
\begin{equation}
	K_p(N) = \left\{ \left(\begin{matrix} a & b \\ c & d\end{matrix}\right)\in K_p\colon c,d-1\in N\Z_p \right\}\subseteq K_p. \nonumber
\end{equation}
Finally, given a place $v$ and a matrix $g_v\in G(\Q_v)$ we write $$\iota_v(g_v) = (1,\ldots, 1,g_v,1,\ldots)\in G(\A).$$

Globally we will work with the compact open subgroup
\begin{equation*}
	K(N)^{(S)} = \prod_{p\not\in S} K_p(N) \subseteq G(\A^{(S)}).
\end{equation*}
Recall that strong approximation for $\SL_2$ together with surjectivity of $\det\colon K(N)^{(S)}\to \Z^{(S),\times}$ yields
\begin{equation}
	G(\A) = G(\Q) G(\Q_S)K(N)^{(S)}.\nonumber
\end{equation}
In particular, we obtain the (natural) identification
\begin{equation}
	X(N)^S := G(\Q)\backslash G(\A)/K(N)^{(S)} \cong \Gamma_1(N)_S\backslash G(\Q_S), \label{eq:identification_S}
\end{equation}
where
\begin{equation}
	\Gamma_1(N)_S = G(\Q)\cap K(N)^{(S)} \subseteq G(\Z[1/S]). \nonumber
\end{equation}
As a concrete example we can take $S=\{\infty\}$, then 
\begin{equation}
	X(N)^{\infty} =\Gamma_1(N)_{\infty}\backslash G(\R) = \Gamma_1(N)\backslash \SL_2(\R), \nonumber
\end{equation}
for the classical congruence subgroup
\begin{equation}
	\Gamma_1(N) = \left[ \begin{matrix} \Z & \Z \\ N\Z & 1+NZ\end{matrix}\right] \cap \SL_2(\Z).\nonumber
\end{equation}
Note that this notation is a bit peculier in the sense that $X(N)^S = X(1)^S$ when the prime divisors of $N$ lie in $S$.

We employ the standard notation $[G(\A)] = G(\Q)\backslash G(\A)$ for the full adelic quotient. Combining the canonical quotient map with the identification in \eqref{eq:identification_S} we obtain the map
\begin{equation}
	\pi_{S,N} \colon [G(\A)] \to X(N)^S.
\end{equation}

We turn towards an $S$-arithmetic description of (discrete) low-lying horocycles. First, we define the map
\begin{align}
	i_q\colon \prod_{p\mid q} \Z_p &\to [G(\A)] \nonumber\\
	(x_p)_{p\mid q} &\mapsto G(\Q)\iota_{\infty}\left(\left(\begin{matrix} 1 & 0\\ 0 & q\end{matrix}\right)\right)\prod_{p\mid q}\iota_p\left(\left(\begin{matrix} 1 & \frac{-x_p}{q} \\ 0 & 1\end{matrix}\right)\right). \nonumber
\end{align}  
Given integers $D,N$ we further define $i_q^{(S_D,N)} = \pi_{S_D,N}\circ i_q$. The image of $i_q^{(S_D,N)}$ carries a natural measure denoted by $\mu_{q}^{(S_D,N)}$ coming from the Haar probability measures on $\Z_p$ (for $p\mid q$).

\begin{lemmy}\label{lm:co_prime_shape_sets}
Suppose that $(D,q)=1$. The image of $i_q^{S_D,N}$ is given by
\begin{equation}
	H_q^{S_D,N} = \{x_{q,a}^{S_D}\colon a\in \Z/q\Z\}, \nonumber
\end{equation}
where
\begin{equation}
	x_{q,a}^{S_D} = \Gamma(N)_{S_D}\left(u_{a/q}\left(\begin{matrix} 1 & 0\\ 0 & q\end{matrix}\right), u_{a/q},\ldots,u_{a/q}\right)\in X(N)^S \text{ and }u_{a/q}=\left(\begin{matrix} 1 & a/q \\ 0&1\end{matrix}\right). \nonumber
\end{equation}
Furthermore, we have
\begin{equation}
	\mu_q^{(S_D,N)} = \frac{1}{q}\sum_{a\text{ mod } q} \delta_{x_{q,a}^{S_D}}. \nonumber
\end{equation}
\end{lemmy}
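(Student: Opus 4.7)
The plan is to construct, for each input $(x_p)_{p\mid q} \in \prod_{p\mid q}\Z_p$, an explicit rational element of $G(\Q)$ that conjugates the adelic representative of $i_q((x_p)_{p\mid q})$ into the form displayed in the statement of $x_{q,a}^{S_D}$. Since $(D,q)=1$, the primes dividing $q$ lie \emph{outside} $S_D$, so the unipotent perturbations $u_{-x_p/q}$ live at places that should disappear under $\pi_{S_D,N}$ after one kills their non-integral parts by a suitable rational multiplication.

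First, I would apply the Chinese remainder theorem: the map
\[
    \prod_{p\mid q}\Z_p \twoheadrightarrow \prod_{p\mid q}\Z_p/p^{v_p(q)}\Z_p \isom \Z/q\Z
\]
sends $(x_p)_{p\mid q}$ to a unique $a\bmod q$. Lift $a$ to an integer and multiply the adelic representative by $u_{a/q} \in G(\Q)$ from the left. The archimedean component becomes $u_{a/q}\smat{1}{0}{0}{q} = \smat{1}{a}{0}{q}$ as desired; at $p\mid q$ the component becomes $u_{(a-x_p)/q}$ with $(a-x_p)/q \in \Z_p$ by our choice of $a$, hence it lies in $K_p(N)$; at a prime $p \in S_D$ (necessarily $p\nmid q$) the new component is $u_{a/q} \in G(\Q_p)$, matching the formula for $x_{q,a}^{S_D}$; and at every remaining $p \notin S_D\cup\{p:p\mid q\}$ we again obtain $u_{a/q}\in K_p(N)$. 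Collecting the $S_D$-components and absorbing the off-$S_D$ components into $K(N)^{(S_D)}$ gives $i_q^{(S_D,N)}((x_p))=x_{q,a}^{S_D}$.

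Next I would verify that the assignment $a\bmod q\mapsto x_{q,a}^{S_D}$ is injective. Well-definedness on $\Z/q\Z$ is clear, since for any $m\in\Z$ the element $u_m$ lies in $\Gamma_1(N)_{S_D}$. For injectivity, an equality $x_{q,a}^{S_D}=x_{q,a'}^{S_D}$ in $X(N)^{S_D}\cong \Gamma_1(N)_{S_D}\bs G(\Q_{S_D})$ forces, by comparing archimedean components, the rational element $\gamma=u_{(a-a')/q}$ to lie in $\Gamma_1(N)_{S_D}=G(\Q)\cap K(N)^{(S_D)}$. Testing $\gamma\in K_p(N)$ for every $p\mid q$ yields $(a-a')/q\in\Z_p$, and running over all $p\mid q$ and invoking CRT gives $a\equiv a'\Mod q$. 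This establishes the first assertion.

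Finally, for the measure statement, the fiber of $i_q^{(S_D,N)}$ over $x_{q,a}^{S_D}$ is, by the computation above, exactly the coset
\[
    \{(x_p)_{p\mid q}\in\tprod_{p\mid q}\Z_p\,:\,x_p\equiv a\Mod{p^{v_p(q)}\Z_p}\ \forall p\mid q\},
\]
whose Haar probability measure equals $\prod_{p\mid q} p^{-v_p(q)}=1/q$. Pushing forward the product Haar measure therefore yields the uniform average $\frac{1}{q}\sum_{a\bmod q}\delta_{x_{q,a}^{S_D}}$. The only delicate point in the whole argument is bookkeeping the behaviour at primes $p\in S_D$ versus $p\mid q$ (which are disjoint thanks to $(D,q)=1$), and making sure the chosen rational element $u_{a/q}$ simultaneously cleans up every $p\mid q$ place while producing the prescribed local components at $\infty$ and at the primes dividing $D$.
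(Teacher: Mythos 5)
Your argument is correct and is essentially the paper's proof written out in full: the paper simply notes that, because $(D,q)=1$, the map $i_q^{(S_D,N)}$ factors through $\prod_{p\mid q}\Z_p/q\Z_p\cong\Z/q\Z$, which is exactly the content of your CRT step, the left multiplication by $u_{a/q}\in G(\Q)$, and the absorption of the remaining components into $K(N)^{(S_D)}$. The injectivity verification is a correct (and harmless) extra; it is not needed for the measure identity, since the pushforward of Haar measure is $\frac1q\sum_{a\bmod q}\delta_{x_{q,a}^{S_D}}$ as soon as the map factors through $\Z/q\Z$ with fibers of measure $1/q$.
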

\begin{proof}
This follows directly after noting that, for $(D,q)=1$, the map $i_q^{(S_D,N)}$ factors through $\prod_{p\mid q} \Z_p/q\Z_p \cong \Z/q\Z$.
\end{proof}

\begin{rem}
Note that if $D=N=1$, then we have
\begin{equation}
	x_{q,a}^{\{\infty\}} = \textrm{PGL}_2(\Z)\cdot \left(\begin{matrix}
		1 & a \\ 0 & q\end{matrix}\right) \in \textrm{PGL}_2(\Z)\backslash \textrm{PGL}_2(\R). \nonumber
\end{equation}
After identifying $\textrm{PGL}_2(\Z)\backslash \textrm{PGL}_2(\R)$ and $\SL_2(\Z)\backslash \SL_2(\R)$ we note that $\mu_q^{(\{\infty\},1)}$ is precisely the measure appearing on the left hand side of \eqref{eq:fist_phase_equidist_stat}. 

Going one step further we can also quotient by $K_{\infty}$ on the right. Using the usual identification
\begin{equation}
	\textrm{PGL}_2(\Z)gK_{\infty} \mapsto g.i 
\end{equation}
we observe that $H_q^{\{\infty\},1}$ projects to $\overline{H}_q$ defined in \eqref{eq:class_low_lying_hjor}. 
\end{rem}

We can now state the following $S$-arithmetic version of the equidistribution result from \eqref{eq:fist_phase_equidist_stat}.

\begin{theorem}\label{th:S-arithmetic_horocycle}
For fixed $D,N\in \N$ we have 
\begin{equation}
	\lim_{q\to \infty} \mu_q^{S_D,N}(f) = \frac{1}{\Vol(X(N)^{S_D})}\int_{X(N)^{S_D}}f(x) dx, \nonumber
\end{equation}
for $f\in \mathcal{C}_c^{\infty}(X(N)^{S_D})$.
\end{theorem}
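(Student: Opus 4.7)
I follow the classical spectral approach to low-lying horocycle equidistribution (going back to Zagier and Sarnak), adapted to the $S$-arithmetic setting. Without loss of generality we may assume $(q,DN)=1$, since only finitely many $q$ violate this as $q\to\infty$. Using \eqref{eq:identification_S} and the definition of $i_q$, we rewrite
\[
\mu_q^{S_D,N}(f)=\int_{\prod_{p\mid q}\Z_p}\widetilde{f}\bigl(g_q\,u(\mathbf{x})\bigr)\,d\mathbf{x},
\]
where $\widetilde{f}\in L^2([G(\A)])^{K(N)^{(S_D)}}$ is the natural lift of $f$, $g_q=\iota_\infty(\diag(1,q))$, and $u(\mathbf{x})=\prod_{p\mid q}\iota_p(u_{-x_p/q})$. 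A key point is that $g_q$ and $u(\mathbf{x})$ commute since they are supported on disjoint places.

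Next, spectrally decompose $\widetilde{f}=c_0+\widetilde{f}_{\mathrm{cusp}}+\widetilde{f}_{\mathrm{Eis}}$, where $c_0=\Vol(X(N)^{S_D})^{-1}\int_{X(N)^{S_D}}f$ is the mean. The constant contribution reproduces precisely the desired main term, so it suffices to prove that both the cuspidal and Eisenstein contributions vanish as $q\to\infty$.

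For the cuspidal part, expand each cusp form $\varphi$ via its adelic Whittaker expansion. Since $g_q$ is diagonal and commutes with $u(\mathbf{x})$, a direct computation yields
\[
\varphi\bigl(g_q\,u(\mathbf{x})\bigr)=\sum_{\alpha\in\Q^\times}W_\varphi\bigl(\diag(\alpha,1)g_q\bigr)\,\prod_{p\mid q}\psi_p(-\alpha x_p/q).
\]
Integration over $\mathbf{x}$ and orthogonality of the additive characters of each $\Z_p$ forces $v_p(\alpha)\geq v_p(q)$ for every $p\mid q$. Combined with the spherical support condition on $W_p$ at the remaining finite places, only $\alpha=q\beta$ with $\beta$ in a fixed lattice depending on $N$ contributes. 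By the explicit unramified Whittaker formula, $\prod_{p\mid q}W_p(\diag(q\beta,1))$ contributes a factor of size $q^{-1/2}$, while $W_\infty(\diag(\beta,1))$ decays rapidly in $\beta$, giving an absolutely convergent bound of $O_f(q^{-1/2})$ on the cuspidal part.

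The Eisenstein part is handled by the analogous spectral integration along $\Re(s)=1/2$. The constant term $y^s+\varphi(s)y^{1-s}$ of each Eisenstein series, evaluated on our orbit with archimedean height $1/q$, has modulus $O(q^{-1/2})$; the Whittaker coefficients are handled exactly as in the cuspidal case. The $s$-integral is controlled by Sobolev-type norms of $f$ together with standard convexity bounds for the relevant Dirichlet $L$-functions. The main technical obstacle is obtaining sufficient uniformity in the spectral parameter to sum over the full continuous spectrum, but this is classical. Putting the pieces together yields a total error of $O_f(q^{-1/2+\epsilon})$, completing the proof.
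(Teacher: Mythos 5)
There is a genuine gap, and it sits in your very first line. The reduction ``without loss of generality $(q,DN)=1$'' is false: infinitely many $q$ share a factor with $DN$ (every multiple of a prime $p\mid D$, for instance), so these are not finitely many exceptional cases that can be discarded in the limit $q\to\infty$. Worse, the cases you discard are exactly the ones the paper needs the theorem for: in the ``depth'' case the theorem is invoked for sequences $q_n$ with $v_l(q_n)\to\infty$ for a prime $l\in S_D$ (see the lemma in that section, whose first assertion is said to be ``a direct consequence of Theorem~\ref{th:S-arithmetic_horocycle}''). When $p\mid q$ with $p\in S_D$, the image of $i_q^{(S_D,N)}$ is no longer a finite set of points on $X(N)^{S_D}$ (Lemma~\ref{lm:co_prime_shape_sets} requires $(D,q)=1$); it contains continuous $p$-adic horocycle pieces, the test function $f$ is not $K_p$-invariant at that place but only invariant under some small open compact subgroup, and your appeal to ``the spherical support condition on $W_p$ at the remaining finite places'' and to the unramified Casselman--Shalika evaluation breaks down precisely there. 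The orthogonality-of-characters step still forces $v_p(\alpha)\geq v_p(q)$, but the local Whittaker analysis at places of $S_D$ dividing $q$ has to be redone for ramified vectors, and your proposal does not address this. So as written you prove a weaker statement than the one claimed and used.

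Two further remarks. First, even in the coprime case your claimed cuspidal bound $O_f(q^{-1/2})$ overlooks the Hecke eigenvalue: the unramified Whittaker value at $\diag(q,1)$ is of size $q^{-1/2}\lambda(q)$, so without assuming Ramanujan the natural bound is $O_f(q^{\theta-1/2+\epsilon})$; this is harmless for the qualitative statement since $\theta<1/2$, but the exponent you state is not what your argument gives. Second, for calibration: the paper itself offers no proof of Theorem~\ref{th:S-arithmetic_horocycle}, treating it as a folklore result with references (\cite{He1,Ve,BSY}, and \cite{Ja} for the primitive variant), so your spectral/Whittaker route is entirely in the spirit of that literature; the issue is not the method but that your reduction silently excludes the non-coprime moduli for which the theorem is actually needed.
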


Similarly we can consider the $S$-arithmetic analogue of \eqref{eq:class_low_lying_prim}. Indeed, here we simply consider the restriction $i_q^{\textrm{pr}}$ of $i_q$ to $\prod_{p\mid q}\Z_p^{\times}$. For integers $D,N$ we then obtain the map
\begin{equation}
	\tilde{i}_q^{(S_D,N)}\colon \prod_{p\mid q}\Z_p^{\times} \overset{i_q^{\textrm{pr}}}{\to} [G(\A)] \overset{\pi_{S_D,N}}{\to} X(N)^{S_D}. \nonumber 
\end{equation}
The space $\prod_{p\mid q}\Z_p^{\times}$ is equipped with the Haar probability measure and we denote its image under $\tilde{i}_q^{(S_D,N)}$ by $\tilde{\mu}_q^{(S,N)}$. We directly obtain the following variation of Lemma~\ref{lm:co_prime_shape_sets}.

\begin{lemmy}\label{lm:co_prime_shape_sets_prim}
Suppose that $(D,q)=1$. The image of $\tilde{i}_q^{S_D,N}$ is given by
\begin{equation}
	\widetilde{H}_q^{S_D,N} = \{x_{q,a}^{S_D}\colon a\in (\Z/q\Z)^{\times}\}. \nonumber
\end{equation}
Furthermore, we have
\begin{equation}
	\tilde{\mu}_q^{(S_D,N)} = \frac{1}{\varphi(q)}\sum_{\substack{a\text{ mod } q \\ (a,q)=1}} \delta_{x_{q,a}^{S_D}}. \nonumber
\end{equation}
\end{lemmy}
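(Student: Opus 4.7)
The plan is to deduce this directly from Lemma~\ref{lm:co_prime_shape_sets} via restriction. By construction, $\tilde{i}_q^{(S_D,N)}$ is nothing but the restriction of $i_q^{(S_D,N)}$ to the subset $\prod_{p\mid q}\Z_p^{\times} \subseteq \prod_{p\mid q}\Z_p$. Under the assumption $(D,q)=1$, the proof of Lemma~\ref{lm:co_prime_shape_sets} shows that $i_q^{(S_D,N)}$ factors through the reduction map $\prod_{p\mid q}\Z_p \to \prod_{p\mid q}\Z_p/q\Z_p \cong \Z/q\Z$, the second isomorphism being the Chinese Remainder Theorem, and that the image of a class $a\Mod{q}$ under the induced map is precisely $x_{q,a}^{S_D}$. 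Restricting to the subset of units and using the fact that this reduction sends $\prod_{p\mid q}\Z_p^{\times}$ onto $(\Z/q\Z)^{\times}$ immediately yields the asserted description of $\widetilde{H}_q^{S_D,N}$.

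For the measure statement, I would then transport the Haar probability measure on $\prod_{p\mid q}\Z_p^{\times}$ through this factorization. At each prime $p\mid q$ the Haar probability measure on $\Z_p^{\times}$ pushes forward under $\Z_p^{\times} \twoheadrightarrow (\Z/p^{v_p(q)}\Z)^{\times}$ to the uniform probability measure on $(\Z/p^{v_p(q)}\Z)^{\times}$, because this reduction map is a surjective homomorphism of compact groups and Haar measure is unique up to scaling. Taking products over $p\mid q$ and invoking the CRT identification, the pushforward of the Haar probability measure on $\prod_{p\mid q}\Z_p^{\times}$ is the uniform probability measure on $(\Z/q\Z)^{\times}$, which assigns mass $1/\varphi(q)$ to each residue class. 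Combined with the first part this yields the claimed formula for $\tilde{\mu}_q^{(S_D,N)}$. There is no real obstacle here: the content of the lemma lies in the factorization through $\Z/q\Z$, which has already been established, and the unit version is a formal consequence.
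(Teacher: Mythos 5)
Your proposal is correct and follows exactly the route the paper intends: the paper gives no separate proof of this lemma, presenting it as an immediate variation of Lemma~\ref{lm:co_prime_shape_sets}, whose proof is precisely the factorization of $i_q^{(S_D,N)}$ through $\prod_{p\mid q}\Z_p/q\Z_p\cong\Z/q\Z$ that you restrict to units. Your additional observation that the Haar probability measure on $\prod_{p\mid q}\Z_p^{\times}$ pushes forward to the uniform measure on $(\Z/q\Z)^{\times}$ is the (implicit) content of the measure statement, so nothing is missing.
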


We record the following $S$-arithmetic version of \eqref{eq:fist_phase_equidist_stat_prim}.

\begin{theorem}\label{th:S-arithmetic_horocycle_prim}
For fixed $D,N\in \N$ we have 
\begin{equation}
	\lim_{q\to \infty} \tilde{\mu}_q^{S_D,N}(f) = \frac{1}{\Vol(X(N)^{S_D})}\int_{X(N)^{S_D}}f(x) dx, \nonumber
\end{equation}
for $f\in \mathcal{C}_c^{\infty}(X(N)^{S_D})$.
\end{theorem}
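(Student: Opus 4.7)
The strategy is to reduce Theorem~\ref{th:S-arithmetic_horocycle_prim} to the non-primitive version, Theorem~\ref{th:S-arithmetic_horocycle}, by Möbius inversion on the coprimality condition $(a,q)=1$. Since $D$ is fixed while $q\to\infty$, we may restrict to $(q,D)=1$.

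\textbf{Step 1 (Möbius and change of variable).} Writing $\mathbbm{1}_{(a,q)=1}=\sum_{d\mid(a,q)}\mu(d)$, reindexing $a=dm$ with $m\bmod q/d$, and applying Lemma~\ref{lm:co_prime_shape_sets_prim} gives
$$\varphi(q)\,\tilde\mu_q^{S_D,N}(f)=\sum_{d\mid q}\mu(d)\sum_{m\bmod q/d} f(x_{q,dm}^{S_D}).$$
The identity $u_{dm/q}=u_{m/(q/d)}$ together with the factorization $\diag(1,q)=\diag(1,q/d)\diag(1,d)$ and the explicit coset description in Lemma~\ref{lm:co_prime_shape_sets} produce the key relation
$$x_{q,dm}^{S_D}=x_{q/d,m}^{S_D}\cdot\iota_\infty(\diag(1,d))$$
in $X(N)^{S_D}$. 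Setting $f_d(x):=f(x\cdot\iota_\infty(\diag(1,d)))\in\mathcal{C}_c^\infty(X(N)^{S_D})$ we deduce
$$\tilde\mu_q^{S_D,N}(f)=\frac{q}{\varphi(q)}\sum_{d\mid q}\frac{\mu(d)}{d}\,\mu_{q/d}^{S_D,N}(f_d).$$
Right translation by $\iota_\infty(\diag(1,d))$ preserves the quotient Haar measure on $X(N)^{S_D}$, so $\int f_d\,dx=\int f\,dx=V\bar f$, where $V=\Vol(X(N)^{S_D})$ and $\bar f$ is the corresponding mean value. Combined with $\sum_{d\mid q}\mu(d)/d=\varphi(q)/q$ we obtain
$$\tilde\mu_q^{S_D,N}(f)-\bar f=\frac{q}{\varphi(q)}\sum_{d\mid q}\frac{\mu(d)}{d}\bigl[\mu_{q/d}^{S_D,N}(f_d)-\bar f\bigr].$$

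\textbf{Step 2 (Splitting).} Fix $\eta>0$ and split the sum over squarefree $d\mid q$ at a parameter $D_0$. For $d\le D_0$, only finitely many divisors occur, and Theorem~\ref{th:S-arithmetic_horocycle} applied to each $f_d$ (with $d\mid q$ and $q/d\to\infty$) gives $|\mu_{q/d}^{S_D,N}(f_d)-\bar f|\to 0$; for $q$ sufficiently large the corresponding contribution is bounded by $\eta\cdot(q/\varphi(q))\log D_0\ll \eta\log\log q\cdot\log D_0$. For the tail $d>D_0$, we use the trivial estimate $|\mu_{q/d}^{S_D,N}(f_d)-\bar f|\le 2\|f\|_\infty$ and Rankin's trick
$$\sum_{\substack{d\mid q,\,\mu(d)^2=1\\ d>D_0}}\frac{1}{d}\le D_0^{-\alpha}\prod_{p\mid q}(1+p^{\alpha-1})\le D_0^{-\alpha}\cdot 2^{\omega(q)}\ll D_0^{-\alpha}\,q^{o(1)},$$
valid for any $0<\alpha\le 1$.

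\textbf{Main obstacle.} The delicate point is balancing the two contributions: uniform smallness of the tail requires $D_0$ to grow with $q$, while the main-term estimate benefits from $D_0$ small. The cleanest route is to invoke a quantitative version of Theorem~\ref{th:S-arithmetic_horocycle} with polynomial decay in $q/d$ (available via spectral methods, cf.~\cite{Ja}), together with the Sobolev bound $\|f_d\|_{\mathrm{Sob},k}\ll d^k\|f\|_{\mathrm{Sob},k}$, which follows from the action of $\mathrm{Ad}(\diag(1,d))$ on $E_{12}$ and $E_{21}$; then choosing $D_0=q^c$ for a small $c>0$ makes both contributions $O(q^{-\delta'})$. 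Alternatively, a purely qualitative double-limit argument works: fix $D_0$ large so that the tail is $\le\eta$ uniformly in $q$ via Rankin with $\alpha=1$ and $2^{\omega(q)}\le d(q)=q^{o(1)}$, then let $q\to\infty$ with $D_0$ fixed so that the finitely many main terms vanish. The proof of Theorem~\ref{th:S-arithmetic_horocycle} is then obtained essentially as a byproduct of this same reduction, starting from the classical Eisenstein-series input \eqref{eq:fist_phase_equidist_stat_prim} and its non-primitive counterpart \eqref{eq:fist_phase_equidist_stat}.
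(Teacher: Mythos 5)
The paper itself does not prove this theorem: it is recorded as the $S$-arithmetic version of known equidistribution results (the classical statement is quoted from \cite{Ja}, cf.\ also \cite{BSY, ELS}), so there is no internal proof to compare against, and your idea of deducing the primitive case from Theorem~\ref{th:S-arithmetic_horocycle} by M\"obius inversion is a legitimate route in principle; indeed your Step 1 identity $x_{q,dm}^{S_D}=x_{q/d,m}^{S_D}\,\iota_{\infty}(\diag(1,d))$ and the resulting formula for $\tilde\mu_q^{S_D,N}(f)$ are correct when $(q,D)=1$. However, two points in your argument do not hold as written. First, the opening claim that "since $D$ is fixed we may restrict to $(q,D)=1$" is unjustified: the limit is over all $q\in\N$, and when some $p\mid (q,D)$ the measure $\tilde\mu_q^{S_D,N}$ is no longer a finite sum of point masses (Lemma~\ref{lm:co_prime_shape_sets_prim} does not apply; the component at $p\in S_D$ is a genuine average over $\Z_p^{\times}$), and this is precisely the regime needed in the paper's depth case, where $v_l(q_n)\to\infty$ for $l\in S$. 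Your reduction would have to be redone with this extra $p$-adic integration, which is routine but not automatic.

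Second, and more seriously, the "purely qualitative double-limit argument" you offer as a fallback fails, exactly for the highly composite moduli this paper is about. For fixed $D_0$ the tail $\sum_{d\mid q,\ d>D_0}\mu(d)^2/d$ is not uniformly small in $q$: for $q$ a primorial it is $\asymp\log\log q$, and your Rankin bound with $\alpha=1$ gives $D_0^{-1}2^{\omega(q)}$, where $2^{\omega(q)}=q^{o(1)}$ is unbounded, so it is not $\le\eta$ uniformly in $q$. Likewise the prefactor $q/\varphi(q)\ll\log\log q$ multiplying the small-$d$ contribution cannot be absorbed when Theorem~\ref{th:S-arithmetic_horocycle} is used only qualitatively (an error $\epsilon_{q}\to0$ with no rate times $\log\log q$ need not tend to $0$). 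Hence only your quantitative route survives: polynomial decay in $q/d$ with Sobolev-norm dependence, $\|f_d\|_{\mathrm{Sob}}\ll d^{k}\|f\|_{\mathrm{Sob}}$, the choice $D_0=q^{c}$, and then Rankin with $\alpha=1$ giving a tail $\ll\log\log q\cdot q^{-c}2^{\omega(q)}\to0$. That version is workable, but it requires an effective, Sobolev-uniform form of Theorem~\ref{th:S-arithmetic_horocycle} on $X(N)^{S_D}$ which is neither stated in the paper nor established in your proposal; as written, the key analytic input is only gestured at. The closing remark that Theorem~\ref{th:S-arithmetic_horocycle} itself is "a byproduct of the same reduction" from \eqref{eq:fist_phase_equidist_stat} is also too quick: the $S$-arithmetic, level-$N$ statement does not follow formally from the statement on $\SL_2(\Z)\backslash\SL_2(\R)$, and in any case the non-primitive theorem involves no coprimality condition, so no M\"obius step occurs there.
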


\section{Special Weyl sums}

Throughout this section we take $\mathbf{x}_0=(x_0,x_0')\in \R^2$, $\textbf{y}_0=(y_0,y_0')\in (\R^{\times})^2$ and $r_0\in \Q$. These numbers will be considered fixed in the sense that we usually do not specify the dependence of our estimates on these parameters. 

Let $q\in \N$, $b\in \Z$ and $l\mid q$. For two one-periodic functions $f_1,f_2\colon \mathbb{H}\to \C$ we define the Weyl sum
\begin{equation}
	\mathcal{W}_{f_1,f_2}(b,l,q;\mathbf{x}_0,\mathbf{y}_0,r_0) = \frac{1}{q}\sum_{a\text{ mod }q} f_1\left(\frac{a+x_0'+y_0'i}{q/l}\right)f_2\left(\frac{ba+x_0+y_0i}{q}+r_0\right). \label{eq:def_Weyl_sum}
\end{equation}

\begin{rem}
This is a slightly generalized version of the Weyl sum defined in \cite[(4.1)]{BM}. Indeed, we have additional parameters $l$, $x_0'$ and $y_0'$. Including these does not make our arguments much more complicated, but it will be very convenient when applying our estimates later on. The main difficulty addressed in this section is obtaining estimates for $\mathcal{W}_{f_1,f_2}(b,l,q;\mathbf{x}_0,\mathbf{y}_0,r_0)$ with no restrictions on $q$ and $b$.
\end{rem}

We will also consider a version where the $a$-sum is restricted to primitive points
\begin{equation}
	\mathcal{W}_{f_1,f_2}^{\textrm{pr}}(b,l,q;\mathbf{x}_0,\mathbf{y}_0,r_0) = \frac{1}{\varphi(q)}\sum_{\substack{a\text{ mod }q \\ (a,q)=1}} f_1\left(\frac{a+x_0'+y_0'i}{q/l}\right)f_2\left(\frac{ba+x_0+y_0i}{q}+r_0\right). \label{eq:def_Weyl_sum_pr}
\end{equation}

In our application $f_1$ and $f_2$ will be certain cuspidal Hecke-Maa\ss\  newforms, which are in particular one-periodic. In this case estimating $\mathcal{W}_{f_1,f_2}(b,l,q;\mathbf{x}_0,\mathbf{y}_0,r)$ reduces to a shifted convolution problem involving the Hecke eigenvalues of $f_1$ and $f_2$. To see this we recall that the Fourier expansion of Maa\ss\ forms from \cite[Theorem~3.1]{Iw}. More precisely, we use the normalization from \cite[(4.2)]{BM} and write $f_j$ as
\begin{equation}
	f_j(z) = \frac{\sqrt{2y}}{L_j}\sum_{n\in \N} \lambda_j(n)K_{it_j}^{\ast}(ny)\cdot \left[e(nx)+\epsilon_j e(-nx) \right], \nonumber
\end{equation}
for $\epsilon_j\in \{\pm 1\}$ and
\begin{equation}
	L_j=L(1,\textrm{sym}^2f_j)^{\frac{1}{2}},\, K_{it}^{\ast}(x) = \cosh(\pi t)^{\frac{1}{2}}K_{it}(2\pi x). \nonumber
\end{equation}
Note that the parameter $t_j$ is (up to sign) determined by the Laplace eigenvalue $\lambda_j=\frac{1}{4}+t_j^2$ of $f_j$. If $f_j$ satisfies the Ramanujan-Petersson conjecture, then $t_j\in \R$ and we have the bounds $\vert \lambda_j(n)\vert \leq \tau(n)$, where $\tau$ is the divisor function.

Inserting this in \eqref{eq:def_Weyl_sum} and executing the $a$-sum leads to
\begin{equation}
	\mathcal{W}_{f_1,f_2}(b,d,q;\mathbf{x}_0,\mathbf{y}_0,r_0) = \frac{2}{q}\sum_{\pm}\sum_{\substack{n_1,n_2\in \N,\\ q\mid ln_1\pm bn_2}}\lambda_1(n_1)\lambda_2(n_2)\sum_{\pm}\epsilon_{\pm,\pm} e(\pm n_2r_0)G\left(\frac{n_1}{q},\pm\frac{n_2}{q}\right), \label{eq:original_Weyl_sum}
\end{equation}
where
\begin{equation}
	G(x_1,x_2) = \frac{2e(lx_0'x_1+x_0x_2)}{L_1L_2} K_{it_1}^{\ast}(\vert ly_0'x_1\vert )K_{it_2}^{\ast}(\vert y_0 x_2\vert)\nonumber
\end{equation}
and 
\begin{equation}
	\epsilon_{+,+}=1,\, \epsilon_{+,-}=\epsilon_1\epsilon_2,\, \epsilon_{-,+}=\epsilon_1 \text{ and } \epsilon_{-,-}=\epsilon_2. \nonumber
\end{equation}

In order to handle the primitive case we recall the following identity for Ramanujan sums
\begin{equation}
	c_q(n) = \sum_{\substack{a\text{ mod }q \\ (a,q)=1}}e(\frac{an}{q}) = \sum_{d\mid (q,n)} \mu(q/d)d = \varphi(q)\mu(q/(q,n))\varphi(q/(q,n))^{-1}.\nonumber
\end{equation}
With this at hand we can expand
\begin{multline}
	\mathcal{W}_{f_1,f_2}^{\textrm{pr}}(b,q;\mathbf{x}_0,\mathbf{y}_0,r_0) = \sum_{d\mid q}\frac{d}{q\varphi(q)}\mu(q/d) \sum_{\pm} \sum_{\substack{n_1,n_2\in \N \\ d\mid ln_1\pm bn_2}} \lambda_1(n_1)\lambda_2(n_2) \\
	\cdot \sum_{\pm } \epsilon_{\pm,\pm}e(\pm n_2 r_0)G(\frac{n_1}{q},\pm \frac{n_2}{q}). \label{eq:original_pr_Weyl_sum}
\end{multline}

We record the estimate
\begin{equation}
	\vert x_1\vert^{j_1}\vert x_2\vert^{j_2}\frac{d^{j_1}}{dx_1^{j_1}}\frac{d^{j_2}}{dx_2^{j_2}}G(x_1,\pm x_2) \ll_{\epsilon,A,j_1,j_2,l,\mathbf{x}_0,\mathbf{y}_0} \vert x_1x_2\vert^{-\epsilon}(1+\vert x_1\vert+\vert x_2\vert)^{-A} \label{eq:est_G}
\end{equation}
from \cite[p. 15]{BM}. In particular, at the cost of a negligible error, we can truncate the $n_1$- and $n_2$-sum in $\mathcal{W}_{f_1,f_2}(b,l,q;\mathbf{x}_0,\mathbf{y}_0,r)$ and $\mathcal{W}_{f_1,f_2}^{\textrm{pr}}(b,l,q;\mathbf{x}_0,\mathbf{y}_0,r)$ at $n_1,n_2\ll q^{1+\epsilon}$.

At this point we can already derive an estimate, which turns out to be useful for $s(q;b)$ not to small.

\begin{lemmy}\label{lm:sieve_est}
Suppose $f_1$  and $f_2$ are $L^2$-normalized Hecke-Maa\ss\ newforms for which the Ramanujan-Petersson conjecture holds at all places. Then we have
\begin{equation}
	\mathcal{W}_{f_1,f_2}(b,l,q;\mathbf{x}_0,\mathbf{y}_0,r_0) \ll_{f_1,f_2,l,\mathbf{x}_0,\mathbf{y}_0,\epsilon} \frac{q^{\epsilon}}{s(q;b)}+\frac{\log\log(q)^{12}}{\log(q)^{\frac{1}{9}}}.\label{eq:first_sieve_bound}
\end{equation}
Furthermore,
\begin{equation}
	\mathcal{W}_{f_1,f_2}^{\textrm{pr}}(b,l,q;\mathbf{x}_0,\mathbf{y}_0,r_0) \ll_{f_1,f_2,l,\mathbf{x}_0,\mathbf{y}_0,\epsilon} \frac{q^{\epsilon}}{s(q;b)}+\frac{\log\log(q)^{13}}{\log(q)^{\frac{1}{9}}}\cdot 2^{\omega(q)},\label{eq:second_sieve_bound}
\end{equation}
where $\omega(q)$ is the number of prime divisors of $q$. Note that the implicit constants in both estimates only depend polynomially on $l$, $\vert x_0'\vert$, $\vert x_0\vert$ and $\vert y_0'\vert+\vert y_0'\vert^{-1}$, $\vert y_0\vert+\vert y_0\vert^{-1}$ and are completely independent of $r_0$.
\end{lemmy}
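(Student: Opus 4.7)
My plan is to begin from the expansion~\eqref{eq:original_Weyl_sum} and use the rapid decay~\eqref{eq:est_G} to truncate $n_1,n_2\leq q^{1+\epsilon}$ at the cost of a negligible tail. Since $l\mid q$ and $(b,q)=1$ force $(l,b)=1$, the congruence $q\mid ln_1\pm bn_2$ forces $l\mid n_2$; writing $n_2=ln_2''$, the condition becomes $(n_1,\pm n_2'')\in\Lambda_{q/l,b}$. From $\Lambda_{q,b}\subseteq\Lambda_{q/l,b}$ together with~\eqref{eq:minimum_bound} one obtains $s(q/l;b)\asymp_l s(q;b)$, so I may work inside this reduced lattice at the cost of only an $O_l(1)$ loss. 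Apply Lemma~\ref{lm:prop_reduced_bas} to fix a reduced basis $(x_1,x_2),(y_1,y_2)$ of $\Lambda_{q/l,b}$ with $\lVert(x_1,x_2)\rVert\ll s(q;b)$, and parametrise each lattice point uniquely as $(n_1,n_2'')=r(x_1,x_2)+h(y_1,y_2)$.

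Next I split the sum according to whether $h=0$ or $h\neq 0$. In the degenerate part $h=0$ both Hecke arguments lie in a single one-dimensional progression $(rx_1,lrx_2)$ with $r\ll q/s(q;b)$; the Deligne bound $|\lambda_j(n)|\leq\tau(n)$, the standard divisor estimate $\sum_{r\leq R}\tau(rx_1)\tau(lrx_2)\ll_{l,\epsilon}R^{1+\epsilon}$, and the prefactor $1/q$ contribute $\ll_\epsilon q^\epsilon/s(q;b)$, which is the first term of~\eqref{eq:first_sieve_bound}. In the generic part $h\neq 0$ the two Hecke eigenvalues are sampled along \emph{distinct} linear forms in $r$, and the main technical input is the Nair--Henriot framework for multiplicative functions evaluated on polynomial values at lattice points: this produces, after normalisation by the covolume $q$, the Holowinsky-type saving $(\log\log q)^{12}/(\log q)^{1/9}$. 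This is the principal obstacle, since one must track uniformity in $l,\mathbf{x}_0,\mathbf{y}_0$ and handle the smooth weight coming from $G$; the additive twist $e(\pm n_2r_0)$ is absorbed harmlessly by working with absolute values, which also explains why the implicit constants are independent of $r_0$.

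For the primitive variant~\eqref{eq:original_pr_Weyl_sum} I would use the Ramanujan identity $c_q(n)=\sum_{d\mid(q,n)}d\,\mu(q/d)$ (already inserted in the text) to unfold the primitivity condition: for each $d\mid q$ one obtains a sum of exactly the same shape as above, but with the congruence at modulus $d$ in place of $q$. Applying the bound just established to each $d$ (using $s(d;b)\leq (q/d)\, s(q;b)$ in a controlled way via~\eqref{eq:minimum_bound}) and invoking $\sum_{d\mid q}|\mu(q/d)|=2^{\omega(q)}$ produces the additional $2^{\omega(q)}$ factor appearing in~\eqref{eq:second_sieve_bound}, completing the proof.
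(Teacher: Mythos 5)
There is a genuine gap at the heart of your argument. The whole content of \eqref{eq:first_sieve_bound} is the logarithmic saving $\log\log(q)^{12}\log(q)^{-1/9}$ on the off-diagonal, and at exactly this point you write that ``the Nair--Henriot framework \ldots produces the Holowinsky-type saving'' without stating the input bound, checking its hypotheses, or doing the bookkeeping. Concretely: after your reduced-basis parametrisation $(n_1,n_2'')=r(x_1,x_2)+h(y_1,y_2)$, for each fixed $h\neq 0$ you must bound a sum of $\vert\lambda_1(rx_1+hy_1)\lambda_2(l(rx_2+hy_2))\vert$ over an $r$-interval whose length can be as short as $\asymp q^{1/2}$, with linear forms whose coefficients and shifts can be as large as $q^{1+\epsilon}$; a Nair--Henriot bound uniform in this range requires controlling discriminant/content factors on average over $h$, and none of this is addressed. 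More seriously, you do not explain how the $q^{\epsilon}$-losses (from the truncation at $q^{1+\epsilon}$, from the $\vert x_1x_2\vert^{-\epsilon}$ blow-up of $G$ in \eqref{eq:est_G}, and from the summation over the $\asymp s(q;b)q^{\epsilon}$ values of $h$) are kept away from the log-saving term; if any such factor multiplies it, the second term degrades to $q^{\epsilon}$ and the bound is lost. The paper's proof is organised precisely to avoid this: one pulls out the $q$-smooth parts $d_1,d_2\mid q^{\infty}$ of $n_1,n_2$, shows that the remaining points form sets $\mathcal{S}_{k,d_1,d_2}\in\mathcal{C}_q(X,Y)$ with $X\ll kq((d_1,d_2),q)/(d_1d_2)$ and $Y\ll kq/s(q;b)$, and applies the sieve estimate \cite[Corollary~3.1]{BM}, so that the $q^{\epsilon}$-losses only hit the $Y$-term (which is the $q^{\epsilon}/s(q;b)$ contribution) while the saving $\log(q)^{-1/9}$ multiplies a divisor sum over $d_1,d_2\mid q^{\infty}$ that a Mertens computation bounds by $\log\log(q)^{12}$ --- this is where both exponents in \eqref{eq:first_sieve_bound} actually come from, and your sketch produces neither.

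Two further points. First, your opening reduction assumes $(b,q)=1$ (to deduce $l\mid n_2$ and pass to $\Lambda_{q/l,b}$), but the lemma makes no such assumption, and in the depth case the lemma is applied with $b$ not coprime to $q$; handling general $b$ requires extra reduction steps and the comparison $s(q;fb)\asymp_f s(q;b)$ of Lemma~\ref{lm:comparison_s}, as in the proof of Lemma~\ref{lm:shifted_conv_est} (where, incidentally, the reduced-basis/diagonal-versus-off-diagonal split you propose is the paper's strategy --- but for that lemma, not this one). Second, in the primitive case your phrase ``applying the bound just established to each $d$'' hides a rescaling: the modulus-$d$ sum keeps the prefactor $1/q$ and the truncation at $q^{1+\epsilon}$, so its main term is $\frac{q}{d}\cdot\frac{\log\log(q)^{12}}{\log(q)^{1/9}}$ rather than the bound for modulus $q$ itself; only after weighting by $\frac{d\mu(q/d)^2}{\varphi(q)}$ and using $q/\varphi(q)\ll\log\log(q)$ does one get the factor $2^{\omega(q)}\log\log(q)^{13}$ of \eqref{eq:second_sieve_bound}. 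That part of your plan is repairable, but the off-diagonal estimate as proposed is not a proof.
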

\begin{proof}
After estimating \eqref{eq:original_Weyl_sum} trivially and handling the tail using \eqref{eq:est_G} we need to estimate
\begin{equation}
	\mathcal{W} = \frac{1}{q}\sum_{\substack{ln_1\pm bn_2\equiv 0\text{ mod }q\\ n_1,n_2\ll q^{1+\epsilon}}} \vert \lambda_1(n_1)\lambda_2(n_2)\vert \left\vert G\left(\frac{n_1}{q},\pm \frac{n_2}{q}\right)\right\vert. \nonumber
\end{equation}	 
	
This will be done using a sieve estimate as in \cite[pp. 16-17]{BM}. We define $z=q^{\gamma}$ for some sufficiently small $\gamma>0$ and consider the annuli
\begin{equation}
	\mathcal{R}_{k,d_1,d_2} = \{(x_1,x_2)\in \mathbb{R}_+^2\colon (k-1)q<\sqrt{(d_1x_1)^2+(d_2x_2)^2}\leq kq \}. \nonumber
\end{equation}
Further we temporarily set
\begin{align}
	\Gamma_{d_1,d_2}&=\{ (n_1,n_2)\in \Z^2 \colon d_1n_1\pm b d_2n_2\equiv 0\text{ mod }q\} \text{ and } \nonumber \\
	\Gamma_{d_1,d_2}' &= \{ (n_1,n_2) \in \Z^2\colon d_1\mid n_1, d_2\mid n_2 \text{ and } n_1\pm b n_2\equiv 0\text{ mod }q \}. \nonumber
\end{align}
Define $\mathcal{S}_{k,d_1,d_2}=\Gamma_{d_1,d_2}\cap \mathcal{R}_{k,d_1,d_2}$. As in \cite[Section~3]{BM} we write $\mathcal{C}_q(X,Y)$ for the collection of all subsets $\mathcal{S}\subseteq \mathbb{N}^2$ contained in a ball of radius $X^{100}$ around the origin such that 
\begin{equation}
	\sharp \{\mathbf{n}=(n_1,n_2)\in \mathcal{S}\colon  s_1\mid  n_1 \text{ and } s_2\mid n_2\} = \frac{X}{s_1s_2}+O(Y),\nonumber
\end{equation}
for all $(s_1s_2,q)=1$. Observe that $\sharp \mathcal{S}_{k,d_1,d_2} = \sharp (\mathcal{R}_{k,1,1}\cap \Gamma_{d_1,d_2}')$. We conclude that, for
\begin{multline}
	X_{k,d_1,d_2} = \frac{\Vol(\mathcal{R}_{k,1,1})}{[\Z^2\colon \Gamma_{d_1,d_2}']} \ll \frac{kq((d_1,d_2),q)}{d_1d_2} \\ \text{ and }Y_{k,d_1,d_2}=\frac{{\rm length}(\partial\mathcal{R}_{k,1,1})}{s(\Gamma_{d_1,d_2}')} \ll \frac{kq}{s(q;b)}, \nonumber
\end{multline}
we have $\mathcal{S}_{k,d_1,d_2}\in \mathcal{C}_q(X_{k,d_1,d_2},Y_{k,d_1,d_2})$ as long as $d_1,d_2\mid q^{\infty}$. 
	
Using \eqref{eq:est_G} we estimate
\begin{multline}
	\mathcal{W} \ll_{f_1,f_2,l,\mathbf{x}_0,\mathbf{y}_0,A,\epsilon} q^{-1}\sum_{k\ll q^{\epsilon}}k^{-A}\sum_{\substack{d_1,d_2\mid q^{\infty} \\ d_1,d_2\ll q^{1+\epsilon}, \\ l\mid d_1}} \vert\lambda_1(d_1)\lambda_2(d_2)\vert \\ \cdot \sum_{\substack{(n_1,n_2)\in \mathcal{S}_{k,d_1,d_2}\\ (n_1n_2,q)=1}} \vert \lambda_1(n_1)\lambda_2(n_2)\vert. \nonumber
\end{multline}
Applying \cite[Corollary~3.1]{BM} to estimate the inner sum yields
\begin{multline}
	\mathcal{W} \ll_{f_1,f_2,\mathbf{x}_0,\mathbf{y}_0,A,\epsilon,\gamma} \left(q^{\epsilon-\gamma/4}+\log(q)^{-\frac{1}{9}}\right)\cdot \sum_{\substack{d_1,d_2\mid q^{\infty} \\ d_1,d_2\ll q^{1+\epsilon},\\ l\mid d_1}} \frac{((d_1,d_2),q)}{d_1d_2}\vert \lambda_1(d_1)\lambda_2(d_2)\vert \\ + \frac{q^{\epsilon+3\gamma}}{s(q;b)}\sum_{\substack{d_1,d_2\mid q^{\infty} \\ d_1,d_2\ll q^{1+\epsilon},\\ l\mid d_1}}\vert \lambda_1(d_1)\lambda_2(d_2)\vert. \nonumber
\end{multline}
Using the estimate $\lambda_i(d_i) \leq \tau(d_i)$ we can estimate
\begin{multline}
	\sum_{\substack{d_1,d_2\mid q^{\infty} \\ d_1,d_2\ll q^{1+\epsilon},\\ l\mid d_1}} \frac{((d_1,d_2),q)}{d_1d_2}\vert \lambda_1(d_1)\lambda_2(d_2)\vert \leq \left(\sum_{\substack{d_1,d_2\mid q^{\infty} \\ d_1,d_2\ll q^{1+\epsilon}}} \frac{((d_1,d_2),q)}{d_1d_2}\right)^4 \\
	\leq \left(\sum_{\substack{d\mid q^{\infty} \\ d\ll q^{1+\epsilon}}} \frac{(d,q)}{d^2} \left(\sum_{\substack{t\mid q^{\infty} \\ t\ll q^{1+\epsilon}/d}}\frac{1}{t}\right)^2\right)^4 \leq  \left(\sum_{\substack{t\mid q^{\infty}\\ t\ll q^{1+\epsilon}}}\frac{1}{t}\right)^{12}.\nonumber
\end{multline}
Let $k=\omega(q)$ be the number of prime divisors of $q$. Note that $k\ll \log(q)$ and that by the prime number theorem the $k$th prime is bounded by $k\log(k)$. By Mertens'  theorem we can estimate
\begin{equation}
	\sum_{\substack{t\mid q^{\infty}\\ t\ll q^{1+\epsilon}}}\frac{1}{t} \leq \prod_{p\mid q} (1-\frac{1}{p})^{-1} \leq \prod_{p\ll k\log(k)}(1-\frac{1}{p})^{-1} \ll \log(k) \ll \log\log(q). \nonumber
\end{equation}
We conclude that 
\begin{equation}
	\sum_{\substack{d_1,d_2\mid q^{\infty} \\ d_1,d_2\ll q^{1+\epsilon},\\ l\mid d_1}} \frac{((d_1,d_2),q)}{d_1d_2}\vert \lambda_1(d_1)\lambda_2(d_2)\vert \ll \log\log(q)^{12}. \nonumber
\end{equation}
Using the Rankin-Trick we also see that
\begin{equation}
	\sum_{\substack{d_1,d_2\mid q^{\infty} \\ d_1,d_2\ll q^{1+\epsilon},\\ l\mid d_1}}\vert \lambda_1(d_1)\lambda_2(d_2)\vert \ll q^{\epsilon}. \nonumber
\end{equation}
Choosing $\gamma$ appropriately concludes the proof of \eqref{eq:first_sieve_bound}.

We turn towards estimating $\mathcal{W}_{f_1,f_2}^{\textrm{pr}}(b,l,q;\mathbf{x}_0,\mathbf{y}_0,r_0)$. Similarly, by \eqref{eq:original_pr_Weyl_sum} and \eqref{eq:est_G} it is sufficient to handle
\begin{equation}
	\mathcal{W}^{\textrm{pr}} = \sum_{d\mid q} \frac{d\mu(q/d)^2}{\varphi(q)}\cdot\underbrace{\frac{1}{q}\sum_{\substack{ln_1\pm bn_2\equiv 0\text{ mod }d\\ n_1,n_2\ll q^{1+\epsilon}}} \vert \lambda_1(n_1)\lambda_2(n_2)\vert \left\vert G\left(\frac{n_1}{q},\pm \frac{n_2}{q}\right)\right\vert}_{=\colon\mathcal{W}_d}.\nonumber
\end{equation}
Adapting the argument above allows us to estimate $\mathcal{W}_d$ using \cite[Corollary~3.1]{BM} by
\begin{equation}
	\mathcal{W}_d \ll \frac{q}{d}\cdot \frac{\log\log(q)^{12}}{\log(q)^{\frac{1}{9}}}+\frac{q^{\epsilon}}{s(d;b)}.
\end{equation} 
Inserting this above yields
\begin{equation}
	\mathcal{W}^{\textrm{pr}}\ll \frac{\log\log(q)^{12}}{\log(q)^{\frac{1}{9}}} \cdot \sum_{d\mid q} \frac{d\mu(q/d)^2}{\varphi(q)} \frac{q}{d}  + q^{\epsilon}\sum_{d\mid q} \frac{d\mu(q/d)^2}{\varphi(q)} \frac{1}{s(d;b)}.
\end{equation}
Using the estimate $\frac{1}{\varphi(q)} \ll \frac{1}{q}\log\log(q)$ we obtain 
\begin{equation}
	\frac{\log\log(q)^{12}}{\log(q)^{\frac{1}{9}}} \cdot \sum_{d\mid q} \frac{d\mu(q/d)^2}{\varphi(q)} \frac{q}{d} \ll \frac{\log\log(q)^{13}}{\log(q)^{\frac{1}{9}}} \cdot 2^{\omega(q)}. \nonumber
\end{equation}
On the other hand, using \eqref{eq:minimum_bound} we can easily establish 
\begin{equation}
	q^{\epsilon}\sum_{d\mid q} \frac{d\mu(q/d)^2}{\varphi(q)} \frac{1}{s(d;b)} \ll \frac{q^{\epsilon}}{s(q;b)}. \nonumber
\end{equation}
Combining everything yields \eqref{eq:second_sieve_bound} and completes the proof.
\end{proof}

If $s(q;b)$ is not too big, then another approach is necessary. Here one uses a shifted convolution estimate.

\begin{lemmy}\label{lm:shifted_conv_est}
Suppose $f_1$  and $f_2$ are $L^2$-normalized Hecke-Maa\ss\ newforms of level $N_1$ and $N_2$ for which the Ramanujan-Petersson conjecture holds at all places. Let $r_0\in \Q^{\times}$ and assume that the denominator $d_0$ of $r_0$ satisfies $(d_0,N_2)=1$. Then we have
\begin{equation}
	\mathcal{W}_{f_1,f_2}(b,l,q;\mathbf{x}_0,\mathbf{y}_0,r_0) \ll_{f_1,f_2,l,\mathbf{x}_0,\mathbf{y}_0,r_0,\epsilon}  s(q;b)^{\epsilon-1} + s(q;b)^{A}q^{2\theta-\frac{1}{2}+\epsilon},\label{eq:shifted_con_first_bound}
\end{equation}
where $\theta$ is an admissible exponent towards the Ramanujan-Petersson conjecture as in Theorem~\ref{th:main_prim} and $A\in \R_{0}$ is an absolute constant. 

Furthermore, we have
\begin{multline}
	\mathcal{W}_{f_1,f_2}^{\textrm{pr}}(b,l,q;\mathbf{x}_0,\mathbf{y}_0,r_0) \ll_{f_1,f_2,l,\mathbf{x}_0,\mathbf{y}_0,r_0,\epsilon} \min\left(\frac{(2^{\omega(q)})^2}{s(q;b)^{1-\epsilon}},\log\log(q)^{C+1}s(q;b)^{-\frac{1}{\log\log(q)}}\right) \\ + s(q;b)^{A}q^{2\theta-\frac{1}{2}+\epsilon}, \label{eq:shifted_con_second_bound}
\end{multline}
for some absolute constant $C>0$. The implicit constants depend polynomial on $l$, $\vert x_0'\vert$, $\vert x_0\vert$, $\vert y_0'\vert+\vert y_0'\vert^{-1}$, $\vert y_0\vert+\vert y_0\vert^{-1}$ and on the denominator of $r_0$.
\end{lemmy}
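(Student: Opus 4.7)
The plan is to follow the template of \cite[Lemma~4.2]{BM}, with the principal novelty being the use of Lemma~\ref{lm:prop_reduced_bas} to parametrize the divisibility condition $q \mid ln_1 \pm bn_2$ in the absence of primitivity for the minimal vector of $\Lambda_{q,b}$ (cf.\ Remark~\ref{rm:Wieser}). After truncating \eqref{eq:original_Weyl_sum} to $n_1, n_2 \ll q^{1+\epsilon}$ via \eqref{eq:est_G}, I observe that the auxiliary lattice $\Lambda' = \{(m_1,m_2) \in \Z^2 : lm_1 \pm bm_2 \equiv 0 \text{ mod } q\}$ has minimum comparable to $s(q;b)$ by Lemma~\ref{lm:comparison_s}, so Lemma~\ref{lm:prop_reduced_bas} furnishes a reduced basis $(x_1,x_2)^\top, (y_1,y_2)^\top$ and reformulates membership as $n_1 x_2/r - n_2 x_1/r = h\,q/r$ together with congruences modulo $r_1, r_2$. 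Splitting the resulting sum over $h$ produces a diagonal $h = 0$ piece and an off-diagonal $h \neq 0$ piece.

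For $h = 0$ the pair $(n_1, n_2)$ is constrained to the ray through the minimal vector, and the inner sum collapses to a one-dimensional sum of Hecke eigenvalues against a rapidly decaying weight. Recognising this as a Rankin-Selberg-type Dirichlet series and using \eqref{eq:est_G} to control the tail yields the first term $O_\epsilon(s(q;b)^{\epsilon - 1})$, with the congruences modulo $r_1, r_2$ absorbed into the factor $q^\epsilon$. For $h \neq 0$ the inner sum is a twisted shifted convolution with linear constraint $\alpha n_1 - \beta n_2 = h\,q/r$, $(\alpha, \beta) = 1$, weighted by $\lambda_1(n_1)\lambda_2(n_2)\,e(\pm n_2 r_0) G(\cdot,\cdot)$. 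Opening $e(n_2 r_0) = e(a_0 n_2/d_0)$ via Dirichlet characters modulo $d_0$ (where the coprimality $(d_0, N_2) = 1$ guarantees that the twists $f_2 \otimes \chi$ remain newforms of controlled level satisfying Ramanujan at the archimedean place), a uniform shifted convolution bound then gives $\ll |h|^{O(1)} q^{2\theta - 1/2 + \epsilon}$ per shift; summing over the $O(s(q;b)^{O(1)})$ admissible values of $h$ yields the second term $s(q;b)^A q^{2\theta - 1/2 + \epsilon}$.

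For the primitive version \eqref{eq:shifted_con_second_bound}, I expand via the Ramanujan sum identity \eqref{eq:original_pr_Weyl_sum} into a sum over squarefree $d \mid q$ of non-primitive Weyl sums at modulus $d$. Applying \eqref{eq:shifted_con_first_bound} divisor by divisor and using \eqref{eq:minimum_bound} to compare $s(d;b)$ with $s(q;b)$ produces the crude bound with factor $(2^{\omega(q)})^2 s(q;b)^{\epsilon-1}$. The refined bound $\log\log(q)^{C+1} s(q;b)^{-1/\log\log(q)}$ comes from a Rankin-style trick at exponent $\delta = 1/\log\log q$ applied to $\sum_{d \mid q,\, d \text{ squarefree}} s(d;b)^{-(1-\epsilon)}$, combined with Mertens' estimate over the primes dividing $q$; the shifted convolution tail $s(q;b)^A q^{2\theta - 1/2 + \epsilon}$ carries over unchanged from the non-primitive case.

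The main obstacle I anticipate is the off-diagonal analysis: the congruences $n_i \equiv h y_i \text{ mod } r_i$ from Lemma~\ref{lm:prop_reduced_bas}, which are invisible in the prime case treated by \cite{BM}, must be carried through the shifted convolution machinery without spoiling the $q^{-1/2 + 2\theta}$ savings, and one must show that the resulting loss in $s(q;b)$ is only polynomial, accounting for the exponent $A$. A secondary technical point is that the Dirichlet character opening of $e(n_2 r_0)$ must yield a shifted convolution estimate that is uniform across the family $\{f_2 \otimes \chi : \chi \text{ mod } d_0\}$, which is exactly where the assumption $(d_0, N_2) = 1$ enters the proof.
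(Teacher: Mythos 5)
Your outline reproduces the paper's skeleton (truncation via \eqref{eq:est_G}, parametrization of the congruence by the reduced basis of Lemma~\ref{lm:prop_reduced_bas}, diagonal term by Rankin--Selberg giving $s(q;b)^{\epsilon-1}$, off-diagonal by shifted convolutions, and the primitive case by Ramanujan sums plus \eqref{eq:minimum_bound} and a Rankin trick at exponent $1/\log\log q$), but it leaves precisely the new content of this lemma unproved. The congruences $n_i\equiv hy_i\ \mathrm{mod}\ r_i$, which you correctly flag as ``the main obstacle I anticipate,'' are not an obstacle one can defer: the whole point of the lemma, compared with the prime case of \cite{BM}, is to show how to push them through. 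The paper's resolution is to detect them with additive characters, observe that $n\mapsto e(a_in/r_i)\lambda_i(n)$ are the Fourier coefficients of a Maa\ss{} form on a conjugate of $\Gamma_1(r_i^2N_i)$, decompose that form into old- and newforms with coefficient bounds of size $r_i^{O(1)}$ (via \cite[Lemma~2]{BM14}), clean up with Hecke relations, and only then apply a shifted convolution estimate to pairs of genuine newforms; this is exactly what produces the unspecified polynomial loss $s(q;b)^{A}$. Without some such device your ``uniform shifted convolution bound per shift'' has nothing to act on, because after imposing the congruences the coefficients are no longer Hecke eigenvalues of a single newform.

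Relatedly, you cannot simply cite an off-the-shelf shifted convolution bound. For composite $q$ the shifts $hq'/r$ are generally not coprime to the modulus, and the averaged estimate \cite[Proposition~2.1]{BM} carries a coprimality restriction on the shifts that fails here; the paper has to prove the new averaged estimate Proposition~\ref{pr:shifted_conv} (with the modified large-sieve input \cite[(5.12)]{BM14}) precisely to drop that condition, and this is the second source of the exponent $A$ and of the slightly weaker exponent compared with \cite[(4.7)]{BM}. Your per-shift-plus-trivial-summation numerology is consistent with the target bound, but it presupposes an individual bound uniform in linear coefficients of size $\asymp s(q;b)$ and in levels $r_i^2N_i$ which you neither state nor verify. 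Two smaller points: the character expansion of $e(n_2r_0)$ (together with M\"obius/Hecke-relation reductions to a primitive character, where $(d_0,N_2)=1$ enters) should be performed \emph{before} the lattice decomposition, since otherwise your diagonal $h=0$ term still carries the additive twist and is not a clean Rankin--Selberg sum; and in the primitive case the inner sums are not literally the Weyl sums at modulus $d$ (the ranges $n_i\ll q^{1+\epsilon}$ and the prefactor $1/q$ remain at $q$), while the refined $\log\log(q)^{C+1}s(q;b)^{-1/\log\log q}$ bound requires keeping the weights $d\mu(q/d)^2/\varphi(q)$ and splitting the divisor sum at $D=q/s(q;b)$, which your unweighted sum $\sum_{d\mid q}s(d;b)^{-(1-\epsilon)}$ omits.
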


\begin{proof}
As in \cite[p. 15]{BM} we start with some reduction steps. Write $r_0=c_0/d_0$. First, we replace the additive twist $n_2\mapsto e(n_2r_0)$ by a linear combination of functions $n_2\mapsto \chi(n_2)$ where $\chi$ is a Dirichlet character modulo $d_0$. Thus we need to bound
\begin{equation}
	\frac{1}{q}\left\vert \sum_{\substack{ln_1\pm bn_2\equiv 0\text{ mod }q,\\ n_1,n_2\ll q^{1+\epsilon}}} \lambda_1(n_1)\lambda_2(n_2)\chi(n_2)G\left(\frac{n_1}{q},\pm\frac{n_2}{q}\right)\right\vert. \nonumber
\end{equation}
Next let $\chi^{\ast}$ denote the primitive character underlying $\chi$. Using M\"obius inversion and the Hecke-relations we further reduce the problem to estimating
\begin{equation*}
	\sum_{\substack{g\mid f\mid d_0\\ (g,N_2)=1}}\mu(f)^2\vert \lambda_2(f/g)\vert\left\vert\frac{1}{q} \sum_{\substack{ln_1\pm bfgn_2\equiv 0\text{ mod }q \\ n_1,n_2fg\ll q^{1+\epsilon}}}\lambda_1(n_1)\lambda_2(n_2)\chi^{\ast}(n_2)G\left(\frac{n_1}{q},\pm\frac{gfn_2}{q}\right)\right\vert. \nonumber
\end{equation*}

Note that, since we are assuming $(d_0,N_2)=1$, the coefficient $[\chi^{\ast}\cdot\lambda_2](n_2)$ belongs to a newform. Thus, upon modifying $G$ and replacing $f_2$ by $\chi^{\ast}\otimes f_2$, it is sufficient to consider the sum
\begin{equation}
	\frac{1}{q}\left\vert \sum_{\substack{ln_1\pm bf'n_2\equiv 0 \text{ mod }q \\ n_1,n_2\ll q^{1+\epsilon} }} \lambda_1(n_1)\lambda_2(n_2)G(\frac{n_1}{q},\frac{n_2}{q})\right\vert. \nonumber
\end{equation}
for some $f'\mid d_0^2$. Note that we are making no assumption on $(bf',q)$ at this point. The changes necessary for these reduction steps introduce at most polynomial dependence on $d_0$ and $N_2$ in the implicit constants. However, due to Lemma~\ref{lm:comparison_s} we have $s(q;b) \asymp_{d_0} s(q;\pm f'b)$ and we are considering $d_0$ as fixed.

We continue with another reduction step addressing the presence of $l$ in the congruence condition. Let $l=l'\cdot (l,bf')$ and $b' = bf'/(l,bf')$. Since $l\mid q$ we find that a solution $(n_1,n_2)\in \Z^2$ to $ln_1\pm bf'n_1\equiv 0\text{ mod }q$ must satisfy $n_1\mid l'$. We can thus replace $n_2$ by $l'n_2$ and obtain
\begin{equation}
	\frac{1}{q}\left\vert \sum_{\substack{n_1\pm b'n_2\equiv 0 \text{ mod }q/l \\ n_1,l'n_2\ll q^{1+\epsilon} }} \lambda_1(n_1)\lambda_2(l'n_2)G(\frac{n_1}{q},\frac{l'n_2}{q})\right\vert. \nonumber
\end{equation}
Arguing as above we can modify $G$ and use the Hecke-relation to arrive at sums of the form
\begin{equation}
	\mathcal{W}_{f'',\pm}(b';q/l) = \frac{1}{q}\left\vert \sum_{\substack{n_1\pm b'f''n_2\equiv 0 \text{ mod }q/l \\ n_1,n_2\ll q^{1+\epsilon} }} \lambda_1(n_1)\lambda_2(n_2)G(\frac{n_1}{q},\frac{n_2}{q})\right\vert, \nonumber
\end{equation}
for $f''\mid l^2$. Again we note that these reduction steps introduce at most polynomial dependence on $l$ and that
\begin{equation}
	s(q/l;f''b') \asymp_{d_0,l} s(q;b). \nonumber
\end{equation}
To simplify notation we put $q'=q/l$ and we keep in mind that $q\asymp_l q'$.

Let $(x_1,x_2)^{\top}\in \Lambda_{q',\pm f''b'}$ be a minimal vector and write $r=(x_1,x_2)$. Further set $x_1'=x_1/r$ and $x_2'=x_2/r$ and choose a factorization $r=r_1r_2$ with $(r_i,x_i')=1$ and $(r_1,r_2)=1$. After recalling Lemma~\ref{lm:prop_reduced_bas} we can write
\begin{equation}
	\mathcal{W}_{f'',\pm}(b';q') \leq \sum_{\frac{q'}{r}\mid h}\Bigg\vert \frac{1}{q} \sum_{\substack{n_1x_2'-n_2x_1'=h \\ n_ix_i'\equiv hy_i\text{ mod }r_i \\ n_1,n_2\ll q^{1+\epsilon} }} \lambda_1(n_1)\lambda_2(n_2)G(\frac{n_1}{q},\frac{n_2}{q})\Bigg\vert. \nonumber
\end{equation}

We first handle the diagonal contribution, denoted by $\Delta$, coming from $h=0$. Note that since $(x_1',x_2')=1$ the condition $n_1x_2'-n_2x_1'=0$ together with the congruences $n_i\equiv 0 \text{ mod }r_i$ implies that $x_1\mid n_1$ and $x_2\mid n_2$. Thus we can write 
\begin{equation}
	\Delta = \frac{1}{q} \left\vert \sum_{n\ll \frac{q^{1+\epsilon}}{s(b,q)}} \lambda_1(x_1n)\lambda_2(x_2n)G\left(\frac{x_1n}{q},\frac{x_2n}{q}\right) \right\vert. \nonumber
\end{equation} 
Using Rankin-Selberg theory we can estimate this by
\begin{equation}
	\Delta \ll s(b,q)^{\epsilon -1}. \nonumber
\end{equation}
Let us denote the remaining part of $\mathcal{W}_{f'',\pm}(b';q')$ by $\mathcal{W}_{f'',\pm}^{\ast}(b';q')$, so that
\begin{equation}
\mathcal{W}_{f'',\pm}(b';q') \ll \mathcal{W}_{f'',\pm}^{\ast}(b';q') + s(b,q)^{\epsilon -1}. \nonumber
\end{equation}
In $\mathcal{W}_{f'',\pm}^{\ast}(b';q')$ we detect the congruence condition on $n_1$ (resp. $n_2$) using additive characters. In view of the triangle inequality we obtain that
\begin{equation}
	\mathcal{W}_{f'',\pm}(b';q') \leq \max_{\substack{a_1\text{ mod } r_1,\\ a_2\text{ mod }r_2}}\sum_{\substack{\frac{q'}{r}\mid h,\\ h\neq 0}}\Bigg\vert \frac{1}{q} \sum_{\substack{n_1x_2'-n_2x_1'=h \\ n_1,n_2\ll q^{1+\epsilon} }} e(\frac{a_1n_1}{r_1})\lambda_1(n_1)e(\frac{a_2n_2}{r_2})\lambda_2(n_2)G(\frac{n_1}{q},\frac{n_2}{q})\Bigg\vert. \nonumber
\end{equation}
For $i=1,2$ a quick computation shows that the Maa\ss\  form with Fourier coefficients  $e(\frac{a_in}{r_i})\lambda_i(n)$, which we temporarily denote by $f_i^{(\frac{a_i}{r_i})}$, is automorphic for the lattice
\begin{equation}
	\widetilde{\Gamma}_i = \left(\begin{matrix} r_i & 0 \\ 0 & 1 \end{matrix}\right)\Gamma_1(r_i^2N_i)\left(\begin{matrix}r_i^{-1} & 0 \\ 0 & 1\end{matrix}\right). \nonumber
\end{equation}
We also check that
\begin{equation}
	\frac{1}{\Vol(\widetilde{\Gamma_i}\backslash \Hb)}\int_{\widetilde{\Gamma_i}\backslash \Hb}\vert f_i^{(\frac{a_i}{r_i})}(z)\vert^2d\mu(z)=r_i^{2+o(1)}.\nonumber
\end{equation}
After expanding $f_i^{(\frac{a_i}{r_i})}$ into a suitable basis of old and newforms, as for example given in \cite[Lemma~2]{BM14}, we find that 
\begin{equation}
	f_i^{(\frac{a_i}{r_i})}(z) = \sum_{\tilde{N}_i\mid r_i^2N_i}\sum_{d_i\mid \frac{r_i^2N_i}{\tilde{N}_i}}\sum_{g_i\text{ of level }\tilde{N}_i} a_{g_i,d_i}\cdot g_i(\frac{d_i}{r_i}z).\nonumber
\end{equation}
Note that this decomposition is not orthogonal, but we still have $\vert a_{g_i,d_i} \vert \ll_{N_i,f_i} r_i^3.$ Write $\frac{d_i}{r_i}=\frac{d_i'}{r_i'}$ in lowest terms. Thus, after possibly loosing a polynomial factor of $r$ it is sufficient to bound
\begin{equation}
	\sum_{\substack{\frac{q'}{r}\mid h,\\ h\neq 0}}\Bigg\vert \frac{1}{q} \sum_{\substack{n_1d_1'x_2'-n_2d_2'x_1'=h \\ d_1'n_1,d_2'n_2\ll q^{1+\epsilon} }} \widetilde{\lambda}_1(r_1'n_1)\widetilde{\lambda}_2(r_2'n_2)G(\frac{d_1'n_1}{q},\frac{d_2'n_2}{q})\Bigg\vert, \nonumber
\end{equation}
where $\widetilde{\lambda}_1$ (resp. $\widetilde{\lambda}_2$) is the Fourier coefficient of a $L^2$-normalized Hecke-Maa\ss\  newform $g_1$ (resp. $g_2$)of level $\tilde{N}_1\mid r_1^2N_1$ (resp. $\tilde{N}_2\mid r_2^2N_2$) and some nebentypus. The factors $r_1'$ and $r_2'$ in the Fourier coefficients can be removed using the Hecke relations as above. We thus have to bound
\begin{equation}
	\widetilde{\mathcal{W}} = \sum_{\substack{\frac{q'}{r}\mid h,\\ h\neq 0}}\Bigg\vert \frac{1}{q} \sum_{\substack{n_1x_2''-n_2x_1''=h \\ n_i\ll q^{1+\epsilon}/k_i}} \widetilde{\lambda}_1(n_1)\widetilde{\lambda}_2(n_2)\widetilde{G}(\frac{n_1}{q},\frac{n_2}{q})\Bigg\vert, \nonumber
\end{equation}
Let us stress again that these reduction steps only introduce polynomial dependence in $N_1,N_1$ and $r$. In particular, the new quantities $x_1'',x_2'',k_i$ depend at most polynomial on $r$. Similarly, $\widetilde{G}$ is obtained from $G$ by a mild dilation.

We can finally estimate $\widetilde{W}$ using the results from Appendix~\ref{sec:app}. To do so we apply smooth partitions of unity to the $n_1,n_2$ and $h$-sums. Then we can apply Proposition~\ref{pr:shifted_conv} with
\begin{equation}
	d=q'/r,\, M_1,M_2 \ll q^{1+\epsilon},\, H\ll l_1M_1+l_2M_2 \text{ and }l_1,l_2\ll r^{A_1}\cdot s(q;b), \nonumber
\end{equation}
for some effective constant $A_1\geq 0$. We obtain 
\begin{equation}
	\mathcal{W}_{f'',\pm}^{\ast}(b';q') \ll r^{A_2}s(q;b)^{2-\theta+\epsilon}q^{2\theta-\frac{1}{2}+\epsilon}.\label{eq:compare4.7}
\end{equation}
Here $A_2\geq 0$ is another in effective constant, which arises from the polynomial dependence of the implicit constant in Proposition~\ref{pr:shifted_conv} on $\tilde{N}_1$, $\tilde{N}_2$ and on $\widetilde{G}$, all of which can depend polynomially on $r_1$ and $r_2$. After collecting all the dependencies on $r$ and $s(q;b)$ together we arrive at \eqref{eq:shifted_con_first_bound}.

We turn towards estimating $\mathcal{W}_{f_1,f_2}^{\textrm{pr}}(b,l,q;\mathbf{x}_0,\mathbf{y}_0,r_0)$. Starting from \eqref{eq:original_pr_Weyl_sum} we can perform the same reduction steps as above. It turns out that we have to bound
\begin{equation}
	 \widetilde{W}_{f'',\pm}(b';q') =  \sum_{l_1\mid l}\sum_{\substack{l_1\mid d\mid q,\\ (d,l/l_1)=1}} \frac{d}{\varphi(q)}\mu(q/d)^2\left\vert\frac{1}{q} \sum_{\substack{l'n_1\pm b'f''n_2\equiv 0 \text{ mod }d' \\ n_1,n_2\ll q^{1+\epsilon} }} \lambda_1(n_1)\lambda_2(n_2)G(\frac{n_1}{q},\frac{n_2}{q})\right\vert. \nonumber
\end{equation}
where as above $d'=d/l_1$ and $b'$ is of the form $bf'/(l_1,bf')$ for some $f'\mid d_0^2$. We also have set $l'=l/l_1$. Proceeding as above we replace the congruence $l'n_1\pm bf'n_2\equiv 0 \text{ mod }d'$ by $n_1x_2^{(d')}-n_2x_1^{(d')} \equiv 0 \text{ mod }d'$ as well as additional congruences arising from the possible $\textrm{gcd}$ of $x_1^{(d')}$ and $x_2^{(d')}$, where $(x_1^{(d')},x_2^{(d')})^{\top}$ is a minimal vector in the lattice $$\Lambda(d') = \{n_1,n_2\in \Z^2\colon l'n_1+b'f'' n_2\equiv 0\text{ mod }d'\}.$$  Note that $s(\Lambda(d'))\asymp_{l,d_0} s(d;b)$, so that we can continue as earlier. We treat the diagonal contribution using the Rankin-Selberg method and the off-diagonal using Proposition~\ref{pr:shifted_conv}. We arrive at the bound
\begin{align}
	\widetilde{W}_{f'',\pm}(b';q') &\ll \sum_{d\mid q} \frac{d}{\varphi(q)}\mu(q/d)^2 \left(s(b,d)^{\epsilon-1}+q^{\theta+\epsilon}s(d;b)^{2-\theta}d^{\theta-\frac{1}{2}}\right)\nonumber \\
	&\ll s(q;b)^{A}q^{2\theta-\frac{1}{2}+\epsilon} + \sum_{d\mid q}\frac{d}{\varphi(q)}\mu(q/d)^2 s(d;b)^{\epsilon-1}.\nonumber
\end{align}
We can estimate the remaining $d$-sum in two ways. First, using \eqref{eq:minimum_bound} we get
\begin{equation}
	\sum_{d\mid q}\frac{d}{\varphi(q)}\mu(q/d)^2 s(d;b)^{\epsilon-1} \ll s(q;b)^{\epsilon-} \prod_{p\mid q}\left(\frac{1+p^{\epsilon}}{1-p^{-1}}\right) \ll \frac{(2^{\omega(q)})^2}{s(q;b)^{1-\epsilon}}. \nonumber
\end{equation}
On the other hand we can argue as follows. We write $\varphi(q)=q\cdot \mathcal{P}_q^{-1}$ with $\mathcal{P}_q=\prod_{p\mid q}(1-p^{-1})^{-1} \ll \log\log(q)$. Choose $D=\frac{q}{s(q;b)}$ and split the $d$-sum in
\begin{equation}
	\sum_{d\mid q}\frac{d}{\varphi(q)}\mu(q/d)^2 s(d;b)^{\epsilon-1} \leq \mathcal{P}_q\sum_{\substack{d\mid q \\ d\leq D}}\frac{d}{q}\mu(q/d)^2+s(q;b)^{\epsilon-1}\mathcal{P}_q \sum_{\substack{d\mid q \\ d> D}}\left(\frac{d}{q}\right)^{\epsilon}\mu(q/d)^2. \nonumber
\end{equation}
Here we have estimated $s(b,d)\geq 1$ for $d\leq D$ and we used \eqref{eq:minimum_bound} for $d>D$. Let $\beta=\frac{1}{\log\log(q)}$ and estimate
\begin{align}
	\sum_{\substack{d\mid q \\ d\leq D}}\frac{d}{q}\mu(q/d)^2 &= \sum_{\substack{d\mid q \\ d\geq  q/D}}d^{-1}\mu(d)^{2} \leq \left(\frac{D}{q}\right)^{\beta}\sum_{d\mid q}\mu(d)^2d^{\beta-1} \nonumber \\
	&= \left(\frac{D}{q}\right)^{\beta} \prod_{p\mid q}(1+p^{\beta-1}) \leq \left(\frac{D}{q}\right)^{\beta} \exp\left(C\cdot \log(q)^{\beta}\log\log\log(q)\right)\nonumber\\
	&= \left(\frac{D}{q}\right)^{\frac{1}{\log\log(q)}}\cdot \log\log(q)^C, \nonumber
\end{align}
for some positive constant $C$. Similarly, choosing $1-\alpha-\epsilon=\frac{1}{\log\log(q)}$, we obtain
\begin{equation}
	\sum_{\substack{d\mid q \\ d> D}}\left(\frac{d}{q}\right)^{\epsilon}\mu(q/d)^2 \leq \left(\frac{q}{D}\right)^{\alpha} \prod_{p\mid q}(1+p^{-\alpha-\epsilon}) \leq \left(\frac{q}{D}\right)^{1-\epsilon-\frac{1}{\log\log(q)}}\cdot \log\log(q)^C. \nonumber
\end{equation}
Combining everything we find that
\begin{equation}
	\sum_{d\mid q}\frac{d}{\varphi(q)}\mu(q/d)^2 s(d;b)^{\epsilon-1}\ll  \log\log(q)^{C+1}s(q;b)^{-\frac{1}{\log\log(q)}}. \nonumber
\end{equation}
This shows \eqref{eq:shifted_con_second_bound} and thus completes the proof.
\end{proof}

\begin{rem}
Note that our bound for $\mathcal{W}_{f_1,f_2}(b,l,q;\mathbf{x}_0,\mathbf{y}_0,r_0)$ given in \eqref{eq:compare4.7} is weaker than the corresponding bound in \cite[(4.7)]{Bl} and we have made no attempt to explicate the exponent $A$. One reason for this is that, for general $q$, the condition $(h/q,q)=1$ is not necessarily satisfied. Dropping this condition produces several technical issues in the treatment of the spectral convolution problem, which then lead to a slightly weaker bound. We also want to point out that, if the lattice $\Lambda_{q',\pm f''b'}$, for  $q'$, $f''$ and $b'$ as in the proof above, has a minimal vector $(x_1,x_2)^{\top}$ with co-prime coordinates, then we can take $A=2-\theta$.
\end{rem}

\begin{rem}
The last remaining condition $(q,N_2)=1$ appearing in Lemma~\ref{lm:shifted_conv_est} is only for technical convenience. It should be possible to remove this condition working along the lines of \cite{DHML}.  However, in our application of Lemma~\ref{lm:shifted_conv_est} this condition will always be satisfied.
\end{rem}	
	
The key result of this section, which is a generalization of \cite[Theorem~4.1]{BM}.

\begin{prop}\label{pr:normal_weyl_sum_est}
Let $q\in \N$ be large, $l\mid q$, $b\in \Z/q\Z$ and let $s=s(q;b)$. Further let $f_1,f_2$ be two $L^2$-normalized cuspidal Hecke-Maa\ss\  newforms of level $N_1,N_2$.  Assume that the Ramanujan-Petersson conjecture holds for $f_1$ and $f_2$ at all places. Let $x_0\in \R$, $y_0\in \mathbb{R}^{\times}$ and $r_0\in \Q$. Suppose that the denominator of $r_0$ is coprime to $N_2$. Then we have
\begin{equation}
	\mathcal{W}_{f_1,f_2}(b,l,q;\mathbf{x}_0,\mathbf{y}_0,r_0) \ll_{f_1,f_2,l,\mathbf{x}_0,\mathbf{y}_0,r_0,\epsilon} s^{\epsilon-1}+ \log(q)^{\epsilon-\frac{1}{9}}, \nonumber
\end{equation}
for any $\epsilon>0$. The implied constant depends at most polynomially on $l$, $\vert x_0'\vert$, $\vert x_0\vert$, $\vert y_0'\vert+\vert y_0'\vert^{-1}$, $\vert y_0\vert +\vert y_0\vert^{-1}$, the denominator of $r_0$ as well as on the conductors of $f_1$ and $f_2$.
\end{prop}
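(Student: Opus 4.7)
The plan is to combine the sieve bound of Lemma~\ref{lm:sieve_est} with the shifted convolution bound of Lemma~\ref{lm:shifted_conv_est} by dichotomizing on the size of $s = s(q;b)$. These two bounds are complementary: Lemma~\ref{lm:sieve_est} is weak when $s$ is small (because of the term $q^{\epsilon}/s$), while Lemma~\ref{lm:shifted_conv_est} is weak when $s$ is large (because of the term $s^{A} q^{2\theta - 1/2 + \epsilon}$). Accordingly, I would introduce a threshold $T = q^{\eta}$ for some $\eta > 0$ depending only on $\epsilon$, $\theta$, and the absolute constant $A$, and use the shifted convolution bound in the range $s \leq T$ and the sieve bound in the range $s > T$.

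Concretely, fix $\epsilon > 0$ small and set $\eta = (1/2 - 2\theta)/(A+2)$, which is positive since $\theta < 1/4$; take the auxiliary parameter $\epsilon' = \min(\epsilon, \eta/2)$. In the range $s \leq q^{\eta}$, Lemma~\ref{lm:shifted_conv_est} applied with $\epsilon'$ gives a main term $s^{\epsilon'-1} \leq s^{\epsilon-1}$ together with an error $s^{A} q^{2\theta - 1/2 + \epsilon'}$. A direct computation shows $A\eta + 2\theta - 1/2 = -2\eta$, so this error is at most $q^{-2\eta + \epsilon'} \leq q^{-\eta}$, which is in turn bounded by $s^{-1} \leq s^{\epsilon-1}$ since $s \leq q^{\eta}$. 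In the complementary range $s > q^{\eta}$, Lemma~\ref{lm:sieve_est} contributes $q^{\epsilon'}/s \leq q^{\epsilon' - \eta} \leq q^{-\eta/2}$, which is negligible as a power of $q$, and $\log\log(q)^{12}/\log(q)^{1/9} \ll \log(q)^{\epsilon - 1/9}$, which is already in the desired form. Combining the two cases yields the proposition.

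The only conceptually non-routine step is verifying that a single choice of $\eta$ works across both regimes, and this relies on the positivity of $1/2 - 2\theta$ afforded by the Kim--Sarnak bound $\theta \leq 7/64$. All remaining polynomial dependence on $l$, $\mathbf{x}_0$, $\mathbf{y}_0$, $r_0$, and the conductors of $f_1, f_2$ is inherited directly from the two input lemmas, so no additional bookkeeping is required beyond a careful propagation of the $\epsilon$'s.
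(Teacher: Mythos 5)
Your proposal is correct and follows exactly the paper's (very brief) proof, which simply combines Lemma~\ref{lm:sieve_est} and Lemma~\ref{lm:shifted_conv_est}; your dichotomy at $s=q^{\eta}$ with $\eta=(1/2-2\theta)/(A+2)$ is just the explicit version of that combination, and the computation $A\eta+2\theta-\tfrac12=-2\eta$ checks out.
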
	
\begin{proof}
This follows directly after combining Lemma~\ref{lm:sieve_est} and Lemma~\ref{lm:shifted_conv_est}.
\end{proof}

We also obtain the following result for the Weyl sums arising in the case of primitive points.
\begin{prop}\label{pr:primitive_weyl_sum_est}
Let $q\in \N$ be large, $l\mid q$ and $b\in \Z/q\Z$. Further let $f_1,f_2$ be two $L^2$-normalized cuspidal Hecke-Maa\ss\  newforms of level $N_1,N_2$.  Assume that the Ramanujan-Petersson conjecture holds for $f_1$ and $f_2$ at all places. Let $x_0\in \R$, $y_0\in \mathbb{R}^{\times}$ and $r_0\in \Q$. Suppose that the denominator of $r_0$ is coprime to $N_2$. 
Finally, fix $\epsilon>0$ and assume that (at least) one of the following two conditions is satisfied:
\begin{enumerate}
	\item $2^{\omega(q)}\ll \log(q)^{\frac{1}{10}}$ and $q^{\epsilon}\ll s(q;b)$;
	\item $\min(2^{(2+\epsilon)\omega(q)}, \exp((C+2)\log\log(q)\log\log\log(q))) \ll s(q;b) \ll q^{\frac{1/2-2\theta}{A}-\epsilon}$, for an absolute constant $C>0$, $A$ as in Lemma~\ref{lm:shifted_conv_est} and $\theta\leq \frac{7}{64}$ the currently best known bound towards the Ramanujan-Petersson conjecture.
\end{enumerate}
Then we have
\begin{equation}
	\mathcal{W}_{f_1,f_2}^{\textrm{pr}}(b,l,q;\mathbf{x}_0,\mathbf{y}_0,r_0) = o_{f_1,f_2,l,\mathbf{x}_0,\mathbf{y}_0,r_0,\epsilon}(1). \nonumber
\end{equation}
The implied constant depends at most polynomially on $l$, $\vert x_0'\vert$, $\vert x_0\vert$, $\vert y_0'\vert+\vert y_0'\vert^{-1}$, $\vert y_0\vert +\vert y_0\vert^{-1}$, the denominator of $r_0$ and on the conductors of $f_1$ and $f_2$.
\end{prop}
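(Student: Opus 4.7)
The plan is to combine Lemma~\ref{lm:sieve_est} and Lemma~\ref{lm:shifted_conv_est} in exactly the same spirit as Proposition~\ref{pr:normal_weyl_sum_est}, but using the primitive bounds \eqref{eq:second_sieve_bound} and \eqref{eq:shifted_con_second_bound}. The overall idea is that condition (1) is tailored to the sieve estimate and condition (2) is tailored to the shifted convolution estimate; in each case the hypothesis is designed to force exactly those terms on the right-hand side that might otherwise be too large.

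Under condition (1) I would apply \eqref{eq:second_sieve_bound} with $\epsilon$ replaced by, say, $\epsilon/2$. The hypothesis $q^{\epsilon}\ll s(q;b)$ turns the first term into $\ll q^{-\epsilon/2}=o(1)$, while the hypothesis $2^{\omega(q)}\ll\log(q)^{1/10}$ controls the second term by $\ll \log\log(q)^{13}\log(q)^{-1/9+1/10}=\log\log(q)^{13}\log(q)^{-1/90}=o(1)$.

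Under condition (2) I would apply \eqref{eq:shifted_con_second_bound}. The upper bound $s(q;b)\ll q^{(1/2-2\theta)/A-\epsilon}$ handles the shifted convolution error term $s(q;b)^{A}q^{2\theta-1/2+\epsilon'}$, which becomes $\ll q^{\epsilon'-A\epsilon}$ and is $o(1)$ provided one chooses the Lemma's $\epsilon'$ smaller than $A\epsilon$. For the main term one pairs the two arguments of the $\min$ in the hypothesis with the two arguments of the $\min$ in the lemma. If the hypothesis is active because $2^{(2+\epsilon)\omega(q)}\leq\exp((C+2)\log\log q\log\log\log q)$, then $s(q;b)\gg 2^{(2+\epsilon)\omega(q)}$ and one uses the first entry $(2^{\omega(q)})^{2}s(q;b)^{-1+\epsilon'}$, which for sufficiently small $\epsilon'$ becomes $\ll 2^{-\delta\omega(q)}$ for some $\delta>0$; this is $o(1)$ when $\omega(q)\to\infty$, and when $\omega(q)$ is bounded the numerator is bounded while $s(q;b)\to\infty$ does the rest. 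In the complementary regime $\exp((C+2)\log\log q\log\log\log q)\leq 2^{(2+\epsilon)\omega(q)}$, we have $s(q;b)\gg\exp((C+2)\log\log q\log\log\log q)$ and I would use the second entry $\log\log(q)^{C+1}s(q;b)^{-1/\log\log q}$. Since $s(q;b)^{1/\log\log q}\geq(\log\log q)^{C+2}$, this entry is bounded by $(\log\log q)^{-1}=o(1)$.

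The main obstacle is not any single estimate but the bookkeeping between the two $\min$ structures: the hypothesis provides a lower bound on $s(q;b)$ as a minimum of two quantities, and the lemma provides an upper bound on the main term as a minimum of two quantities, and one must verify that the correct pairing does the job in every regime. The choice of the constants $2+\epsilon$ and $C+2$ in the hypothesis is precisely what makes each pairing win by a small power of $\omega(q)$ or a factor of $(\log\log q)^{-1}$, respectively, once the dominant shifted convolution error has been absorbed by the upper bound $s(q;b)\ll q^{(1/2-2\theta)/A-\epsilon}$.
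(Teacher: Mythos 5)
Your proposal is correct and follows exactly the paper's route: the paper's proof is just the two-line observation that condition (1) is handled by Lemma~\ref{lm:sieve_est} (via \eqref{eq:second_sieve_bound}) and condition (2) by Lemma~\ref{lm:shifted_conv_est} (via \eqref{eq:shifted_con_second_bound}), and your case analysis simply supplies the bookkeeping the paper leaves implicit. The pairings and numerical checks (e.g.\ $\log(q)^{1/10-1/9}=\log(q)^{-1/90}$, choosing the auxiliary $\epsilon$ small against $A\epsilon$, and matching the two minima) are all as intended.
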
	
\begin{proof}
If we are in the first situation, then the result follows from Lemma~\ref{lm:sieve_est}. Otherwise, if we are in the second application, then we can apply Lemma~\ref{lm:shifted_conv_est}.
\end{proof}

\begin{rem}
Unfortunately the statement of Proposition~\ref{pr:primitive_weyl_sum_est} contains some technical restrictions on the size of $s(q;b)$, which are the origin for the restrictions imposed on $s(q_n;b_n)$ in Theorem~\ref{th:main_prim}. We have not tried very hard to optimize these dependencies, but it seems unlikely that our methods are sufficient to remove these constraints completely.
\end{rem}

\section{The locally shallow case}

Throughout this section we fix two primes $l_1,l_2$. We will then consider a sequence of pairs $((q_n,b_n))_{n\in \N}$ satisfying $(q_n,l_1l_2)=1$ for all $n\in \N$ and $s(q_n;b_n)\to \infty$. Our goal is to prove Theorem~\ref{th_shallow} stated in the introduction, so that at some point we will specialize to $l_1=37$ and $l_2=53$. This will be done by closely following the argument from \cite{BM}.

\subsection{Measure classification}\label{measure_class}

In this section we will apply the joinings theorem of Einsiedler and Lindenstrauss to obtain structural information on the weak-star limits of the measures in question. To do so we put $D=l_1l_2$ and recall the corresponding finite set of places $S_D$ defined in \eqref{eq:def_SD}. By the assumption made in the beginning of this section we have $(q_n,D)=1$ for all $n$, so that Lemma~\ref{lm:co_prime_shape_sets} applies for all $n$. Thus, passing to the product we obtain
\begin{equation}
	H_{q_n,b_n}^{S_D,N} = \{ (x_{q_n,a}^{S_D},x_{q_n,b_na}^{S_D})\colon a\in \Z/q_n\Z \} \subseteq X(N)^{S_D}\times X(N)^{S_D}. \nonumber
\end{equation}
We equip $H_{q_n,b_n}^{S_D,N}$ with the normalized counting measure $\mu_{q_n,b_n}^{(S_D,N)}$. Let $\sigma$ be a weak-star limit point of the sequence $(\mu_{q_n,b_n}^{(S_D,N)})_{n\in \N}$. After passing to a subsequence if necessary we can assume that 
\begin{equation}
	\mu_{q_n,b_n}^{(S_D,N)} \to \sigma. \nonumber
\end{equation}
We recall the following basic properties of $\sigma$.

\begin{lemmy}\label{lm:basic_prop}
Let $\textrm{pr}_1$ (resp. $\textrm{pr}_2$) denote the projection $X(N)^{S_D}\times X(N)^{S_D}\to X(N)^{S_D}$ on the first (resp. second)	factor. Then the image $\sigma_i$ of $\sigma$ under $\textrm{pr}_i$ is the Haar probability measure on $X(N)^{S_D}$. Furthermore, the sets $H_{q_n,b_n}^{S_D,N}$ are invariant under right multiplication by
\begin{equation}
	t_i^{\triangle}=(t_i,t_i) \text{ where } t_i=\left(\left(\begin{matrix} l_1^{-1} & 0\\0&l_i \end{matrix}\right),\ldots, \left(\begin{matrix} l_1^{-1} & 0\\0&l_i \end{matrix}\right)\right)\in G(\Q_S) \nonumber
\end{equation}
for $i=1,2$. In particular, $\sigma$ is invariant under the action of $t_1^{\triangle},t_2^{\triangle}$.
\end{lemmy}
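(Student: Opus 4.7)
The lemma has two independent components: identifying the marginals of $\sigma$ and establishing $t_i^\triangle$-invariance of the approximating measures. I would address them in turn, obtaining invariance of $\sigma$ by passing to the weak-star limit.

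For the marginals, Lemma~\ref{lm:co_prime_shape_sets} gives $(\textrm{pr}_1)_*\mu_{q_n,b_n}^{(S_D,N)}=\mu_{q_n}^{(S_D,N)}$, while $(\textrm{pr}_2)_*\mu_{q_n,b_n}^{(S_D,N)}=\mu_{q_n}^{(S_D,N)}$ follows from the fact that $a\mapsto b_na$ is a bijection of $\mathbb{Z}/q_n\mathbb{Z}$ (using $(b_n,q_n)=1$). Theorem~\ref{th:S-arithmetic_horocycle} then gives equidistribution of $\mu_{q_n}^{(S_D,N)}$ to the Haar probability measure on $X(N)^{S_D}$, and continuity of the pushforward under weak-star convergence transfers this to both marginals of $\sigma$.

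For the $t_i^\triangle$-invariance of $H_{q_n,b_n}^{S_D,N}$ (as a finite set equipped with its counting measure), the plan is to exhibit $\gamma\in\Gamma_1(N)_{S_D}$ that brings $g\cdot t_i$ back to the standard form after acting on the left. A brief computation shows that $\gamma=\smat{l_i}{0}{0}{l_i^{-1}}\in G(\mathbb{Q})$ works: at the archimedean place,
\begin{equation*}
\smat{l_i}{0}{0}{l_i^{-1}}\smat{1}{a}{0}{q}\smat{l_i^{-1}}{0}{0}{l_i}=\smat{1}{al_i^2}{0}{q},
\end{equation*}
and the analogous calculation at $l_1,l_2$ yields $u_{al_i^2/q}$. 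Hence
\begin{equation*}
x_{q,a}^{S_D}\cdot t_i=x_{q,\,al_i^2\bmod q}^{S_D},
\end{equation*}
provided $\gamma\in\Gamma_1(N)_{S_D}$. This is verified at each $p\notin S_D$ by rescaling $\gamma$ in $\textrm{PGL}_2(\mathbb{Q}_p)$ to the representative $\smat{l_i^2}{0}{0}{1}$, which lies in $K_p(N)$ because $l_i^2\in\mathbb{Z}_p^\times$ for $p\neq l_i$ and its lower row is $(0,1)$.

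Since $(l_i,q_n)=1$, multiplication by $l_i^2$ is a bijection of $\mathbb{Z}/q_n\mathbb{Z}$, and the same reindexing $a\mapsto al_i^2$ transforms the second coordinate correctly, because $b_n(al_i^2)\equiv b_na'\bmod q_n$. Therefore $t_i^\triangle$ permutes $H_{q_n,b_n}^{S_D,N}$, giving $(t_i^\triangle)_*\mu_{q_n,b_n}^{(S_D,N)}=\mu_{q_n,b_n}^{(S_D,N)}$ for every $n$; this identity is preserved under weak-star convergence, so $(t_i^\triangle)_*\sigma=\sigma$. The only nontrivial step is verifying that $\gamma$ lies in $\Gamma_1(N)_{S_D}$, which requires exploiting the $\textrm{PGL}_2$ scaling freedom to pick a representative in $K_p(N)$ for every $p\notin S_D$; the remaining matrix manipulations are direct.
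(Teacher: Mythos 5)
Your proposal is correct and follows essentially the same route as the paper: the marginals are identified via Theorem~\ref{th:S-arithmetic_horocycle}, and the $t_i^{\triangle}$-invariance is the explicit conjugation computation (as in \cite[Lemma~5.2]{BM}) showing $x_{q,a}^{S_D}\cdot t_i=x_{q,al_i^2}^{S_D}$, with the coprimality $(D,q_n)=1$ used exactly where you use it, namely to make $a\mapsto al_i^2$ a bijection of $\Z/q_n\Z$. You have merely written out the details that the paper delegates to \cite{BM}.
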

This summarizes the statements of \cite[Lemma~5.1 and~5.2]{BM}. 
\begin{proof}
The first part of the lemma is a direct consequence of Theorem~\ref{th:S-arithmetic_horocycle}. The second part follows by a simple computation as in the proof of \cite[Lemma~5.2]{BM}. Here we crucially use that $(D,q_n)=1$ for all $n\in \N$.
\end{proof}

We now define the following measures
\begin{itemize}
	\item Let $\mu_{G\times G}^{(S_D,N)}$ denote the (image if the) product Haar measure $\mu_{G(\Q_{S_D})\times G(\Q_{S_D})}$ on $X(N)^{S_D}\times X(N)^{S_D}$. (We normalize $\mu_{G\times G}^{(S_D,N)}$ to be a probability measure.)
	\item Given $h\in G(\Q_S{S_D})$ we define $\mu_{G,h}^{(S_D,N)}$ to be the (image of the) Haar measure $\mu_{G(\Q_{S_D})}$ on the $G(\Q_{S_D})$-orbit
	\begin{equation}
		(\Gamma(N)_{S_D}\times \Gamma(N)_{S_D})G^{\triangle}(\Q_{S_D})(1,h) \subseteq  X(N)^{S_D}\times X(N)^{S_D}. \nonumber
	\end{equation}
\end{itemize}

As a result of Lemma~\ref{lm:basic_prop} the measure $\sigma$ is a joining (and thus in particular a probability measure). We can therefore apply the joinings classification of Einsiedler and Lindenstrauss given in \cite[Corollary~3.4]{EL}. See \cite{Ak} for a nice introduction and for a formulation similar to the one used below \cite[Theorem~4.4]{Ka}. As in \cite[Proposition~5.1]{BM} we obtain the following result.

\begin{prop}\label{pr:measure_class_normal}
Let $\sigma$ and $D$ be as described above. Then there is $c\in [0,1]$ and a probability measure $\lambda$ on $G(\Q_{S_D})$ such that
\begin{equation}
	\sigma = (1-c)\mu_{G\times G} + c\mu_{G^{\triangle}}, \nonumber
\end{equation}
where
\begin{equation}
	\mu_{G^{\triangle}} = \int_{G(\Q_{S_D})}\mu_{G,h} d\lambda(h). \label{eq:G_triangle}
\end{equation}
\end{prop}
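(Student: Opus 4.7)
I would follow the strategy of \cite[Proposition~5.1]{BM} very closely. The three ingredients are: Lemma~\ref{lm:basic_prop} (which provides the joining structure and the invariance under $t_1^{\triangle}, t_2^{\triangle}$), the joinings classification of Einsiedler--Lindenstrauss \cite[Corollary~3.4]{EL}, and an ergodic decomposition.

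First, Lemma~\ref{lm:basic_prop} tells us that $\sigma$ is a probability measure on $X(N)^{S_D}\times X(N)^{S_D}$ whose two marginals coincide with the Haar probability measure, and that $\sigma$ is invariant under the commuting diagonal actions of $t_1^{\triangle}$ and $t_2^{\triangle}$. These elements act as Hecke-type shifts at the two distinct primes $l_1$ and $l_2$ on each factor and so, together, give a positive-entropy $\mathbb{Z}^2$-action on the joining; it is crucial here that $(D,q_n)=1$ for all $n$, as this is what lets us stay inside $X(N)^{S_D}$ after multiplication by $t_i^{\triangle}$.

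Second, I would invoke the joinings classification \cite[Corollary~3.4]{EL}, in the formulation convenient for two copies of $\mathrm{PGL}_2$ (see \cite{Ak, Ka} for alternative expositions). It asserts that every $(t_1^{\triangle}, t_2^{\triangle})$-invariant joining with Haar marginals is a convex combination of the product Haar measure $\mu_{G\times G}$ and measures whose ergodic components (under the full diagonal $G^{\triangle}(\mathbb{Q}_{S_D})$) are algebraic. Since $G = \mathrm{PGL}_2$ is simple and each marginal projects onto the full space, each such algebraic ergodic component is forced to be a closed orbit of a conjugate of $G^{\triangle}(\mathbb{Q}_{S_D})$ of the form $(\Gamma(N)_{S_D}\times \Gamma(N)_{S_D})G^{\triangle}(\mathbb{Q}_{S_D})(1,h)$ for some $h \in G(\mathbb{Q}_{S_D})$. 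The associated orbit Haar measure is precisely $\mu_{G,h}$.

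Third, disintegrating the non-trivial component along this ergodic decomposition yields a probability measure $\lambda$ on $G(\mathbb{Q}_{S_D})$ so that this component equals $\int_{G(\mathbb{Q}_{S_D})}\mu_{G,h}\, d\lambda(h)$, exactly as in \eqref{eq:G_triangle}. Combining the two pieces with the appropriate weight $c\in[0,1]$ delivers the claimed decomposition. The main obstacle here is the correct packaging of the joinings theorem: one must verify all hypotheses (positive entropy at both distinguished primes, measurable selection of ergodic components, triviality of the centralizer of the torus action in $G\times G$ away from the diagonal). Since the argument was carried out in essentially identical form in \cite[\S5]{BM} and the only change is the replacement of prime $q_n$ by general moduli with $(q_n, D)=1$, which is irrelevant for this measure-theoretic step, these hypotheses transfer directly.
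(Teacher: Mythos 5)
Your proposal matches the paper's own argument: the paper likewise observes via Lemma~\ref{lm:basic_prop} that $\sigma$ is a joining invariant under $t_1^{\triangle},t_2^{\triangle}$ and then applies the Einsiedler--Lindenstrauss classification \cite[Corollary~3.4]{EL} exactly as in \cite[Proposition~5.1]{BM}, so your sketch is correct and takes essentially the same route. (One small remark: the joinings theorem requires no positive-entropy hypothesis --- the Haar marginals guaranteed by Lemma~\ref{lm:basic_prop} are what make it applicable.)
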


\begin{rem}
Note that this result does not require any assumption in $s(b_n,q_n)$. It only requires the assumption $(D,q_n)=1$, which is crucial to obtain the required invariance properties. See Lemma~\ref{lm:basic_prop}.	
\end{rem}

Essentially the same discussion applies to the sequence of measures obtained when restricting to primitive points. Indeed we consider
\begin{equation}
	\widetilde{H}_{q_n,b_n}^{S_D,N} = \{ (x_{q_n,a}^{S_D},x_{q_n,b_na}^{S_D})\colon a\in (\Z/q_n\Z)^{\times} \} \subseteq X(N)^{S_D}\times X(N)^{S_D}. \nonumber
\end{equation}
and the corresponding sequence of (probability) measures $\tilde{\mu}_{q_n,b_n}^{(S_D,N)}$. 

\begin{prop}\label{pr:measure_class_primitive}
Let $D=l_1l_2$ such that $(D,q_n)=1$ for all $n\in \N$ and let $N$ be some integer. Further let $\tilde{\sigma}$ be a weak-star limit point of the sequence $\tilde{\mu}_{q_n,b_n}^{(S_D,N)}$. Then there is $\tilde{c}\in [0,1]$ and a probability measure $\tilde{\lambda}$ on $G(\Q_{S_D})$ such that
\begin{equation}
	\tilde{\sigma} = (1-\tilde{c})\mu_{G\times G} + \tilde{c}\tilde{\mu}_{G^{\triangle}}, \nonumber
\end{equation}
where
\begin{equation}
	\tilde{\mu}_{G^{\triangle}} = \int_{G(\Q_{S_D})}\mu_{G,h} d\tilde{\lambda}(h). \nonumber
\end{equation}
\end{prop}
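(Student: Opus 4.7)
The plan is to mirror the proof of Proposition~\ref{pr:measure_class_normal} essentially verbatim, substituting $\widetilde{H}_{q_n,b_n}^{S_D,N}$ for $H_{q_n,b_n}^{S_D,N}$. The only real task is to verify that both conclusions of Lemma~\ref{lm:basic_prop} persist in the primitive setting; once this is done, the joinings classification \cite[Corollary~3.4]{EL} applies exactly as before and delivers the stated decomposition of $\tilde\sigma$.

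First I would check that both marginals of $\tilde\sigma$ are Haar on $X(N)^{S_D}$. By Lemma~\ref{lm:co_prime_shape_sets_prim} the first projection of $\tilde\mu_{q_n,b_n}^{(S_D,N)}$ equals $\tilde\mu_{q_n}^{(S_D,N)}$, and since $(b_n,q_n)=1$ the map $a\mapsto b_na$ is a bijection of $(\Z/q_n\Z)^{\times}$, so the second projection is also $\tilde\mu_{q_n}^{(S_D,N)}$. Theorem~\ref{th:S-arithmetic_horocycle_prim} then forces both marginals of $\tilde\sigma$ to be the Haar probability measure.

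Next I would verify that $\tilde\mu_{q_n,b_n}^{(S_D,N)}$ is invariant under right translation by $t_1^{\triangle}$ and $t_2^{\triangle}$. Carrying out the same strong-approximation computation as in \cite[Lemma~5.2]{BM}, right multiplication by $t_i$ sends the class $x_{q_n,a}^{S_D}$ to $x_{q_n,\alpha_i a}^{S_D}$ with $\alpha_i\equiv l_1l_i\Mod{q_n}$. The crucial point is that $(D,q_n)=(l_1l_2,q_n)=1$, so $\alpha_i$ is a \emph{unit} modulo $q_n$; in particular multiplication by $\alpha_i$ preserves the subset $(\Z/q_n\Z)^{\times}$. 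Hence $\widetilde{H}_{q_n,b_n}^{S_D,N}$ and the associated measure are invariant under $t_i^{\triangle}$, and this invariance passes to $\tilde\sigma$ upon taking the weak-star limit.

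With both properties in hand the argument of Proposition~\ref{pr:measure_class_normal} applies with only cosmetic changes: $\tilde\sigma$ is a joining of Haar with itself that is invariant under the two commuting diagonalizable elements $t_1^{\triangle}, t_2^{\triangle}$, so \cite[Corollary~3.4]{EL} produces the decomposition $\tilde\sigma=(1-\tilde c)\mu_{G\times G}+\tilde c\tilde\mu_{G^{\triangle}}$ with $\tilde\mu_{G^{\triangle}}$ an average of orbit measures as in \eqref{eq:G_triangle}. I do not expect any serious obstacle; the only real content beyond Proposition~\ref{pr:measure_class_normal} is the observation that the Hecke shift $a\mapsto\alpha_i a$ is by a unit, which would have failed had any prime dividing $D$ also divided $q_n$, but is automatic under our hypothesis.
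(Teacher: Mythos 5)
Your proposal is correct and follows essentially the same route as the paper: the paper's own proof simply notes that Theorem~\ref{th:S-arithmetic_horocycle_prim} makes $\tilde\sigma$ a joining and then invokes \cite[Corollary~3.4]{EL} as in the non-primitive case, which is exactly what you do, with the additional (correct and worthwhile) observation spelled out that the Hecke shift induced by $t_i^{\triangle}$ is multiplication by a unit modulo $q_n$ and hence preserves $(\Z/q_n\Z)^{\times}$, so the invariance of Lemma~\ref{lm:basic_prop} carries over to $\widetilde{H}_{q_n,b_n}^{S_D,N}$.
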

\begin{proof}
In view of Theorem~\ref{th:S-arithmetic_horocycle_prim} it is easy to see that $\tilde{\sigma}$ is a joining. The results follows after applying \cite[Corollary~3.4]{EL} as described in \cite{BM}.
\end{proof}

\subsection{Testing the measure}

We start by dividing the argument from \cite[Section~5.4]{BM} into certain steps that we formulate as separate lemmas. We start by making the following temporary definition.

\begin{defn}
A cuspidal automorphic representation $\pi=\otimes_v \pi_v$ of $G(\A)$ will be called \textit{globally tempered and $S$-super unramified} if it satisfies the following
\begin{enumerate}
	\item The conductor $N_{\pi}$ of $\pi$ satisfies $(N_{\pi},D)=1$ and $\pi_{\infty}$ is spherical;
	\item The local representations $\pi_v$ are tempered at all places $v$ of $\Q$; and
	\item For $v\in S$ the Langlands parameters of $\pi_v$ are trivial. 
\end{enumerate}
\end{defn}

The existence of such representations is not obvious, so that we record the existence of such representations in the form of a lemma.

\begin{lemmy}\label{lm:existence}
Let $D=37\cdot 53$. Then there exists a $S_D$-super unramified representation $\pi$. 
\end{lemmy}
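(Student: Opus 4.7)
My approach is to exhibit $\pi$ as an explicit dihedral Maass form, produced via automorphic induction from a finite-order Grossencharacter of a real quadratic field. The motivation is that dihedral automorphic representations are automatically tempered at every place (the global $L$-parameter is the induction of a unitary character from $W_K$ to $W_{\mathbb{Q}}$, giving a unitary Galois representation, hence tempered local components by local Langlands), and at primes split in the quadratic field their Satake parameters are prescribed directly by the values of the inducing character, giving us precise control over condition (3).

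Concretely, first I would pick a real quadratic field $K = \mathbb{Q}(\sqrt{d})$ in which both $37$ and $53$ split completely. By CRT this amounts to requiring $d$ to be a quadratic residue modulo both primes, which is easily arranged (e.g., $d = 3$, since $3$ is a QR modulo both $37$ and $53$). Next I would construct a finite-order Grossencharacter $\psi$ of $K$ with conductor $\mathfrak{m}$ coprime to $D = 37 \cdot 53$ and trivial archimedean component, subject to (a) $\psi(\mathfrak{p}) = 1$ for each of the four primes of $K$ lying above $37$ or $53$, and (b) $\psi \neq \psi^{\sigma}$ (where $\sigma$ is the nontrivial Galois automorphism). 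Both conditions are satisfiable by making $\mathfrak{m}$ sufficiently divisible: (a) imposes at most four linear constraints on the dual of the ray class group $\mathrm{Cl}_K(\mathfrak{m})$, whose order grows unboundedly with $\mathfrak{m}$, while (b) excludes the subgroup of $\sigma$-invariant characters, which is a proper subgroup for generic $\mathfrak{m}$.

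Setting $\pi = \mathrm{AI}_K^{\mathbb{Q}}(\psi)$ then yields a cuspidal (by (b)) automorphic representation of $\mathrm{GL}_2(\mathbb{A})$, which descends to $\mathrm{PGL}_2$ after twisting by a suitable Hecke character to trivialize the central character. Local-global compatibility of automorphic induction verifies all the required properties directly: unramifiedness outside $D$ and sphericality at $\infty$ follow from the conductor and infinity type of $\psi$; global temperedness follows from the dihedral structure of the $L$-parameter; and triviality of the Langlands parameters at each $v \in S_D$ follows from (a) at the finite places and the triviality of $\psi_\infty$ at the archimedean place.

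\textbf{Main obstacle.} The most delicate point is the archimedean matching condition, that is, the exact sense in which ``trivial Langlands parameters at $\infty$'' is to be interpreted and verified for the dihedral construction. If it means that $\pi_\infty$ is the spherical principal series whose $L$-parameter is the induction $\mathrm{Ind}_{W_K}^{W_\mathbb{R}}(1)$, this is automatic for $\psi$ of trivial infinity type on a real quadratic field. If the condition is stricter (e.g., spectral parameter exactly zero in a strong sense), one may need a careful analysis of the spectrum of $K$-dihedral forms or, alternatively, a fallback on a density-type existence theorem for the cuspidal spectrum of $\mathrm{PGL}_2$ in the spirit of Sauvageot or Shin.
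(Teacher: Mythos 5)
Your overall strategy --- automorphic induction of a finite-order Hecke character $\psi$ of a real quadratic field in which $37$ and $53$ split, with $\psi$ trivial on the primes above them, cuspidality from $\psi\neq\psi^{\sigma}$, and temperedness coming for free from dihedrality --- is exactly the strategy behind the paper's proof, which consists of citing the explicit construction of \cite[Section~5.4]{BM}: there one takes a non-trivial class group character of $\mathbb{Q}(\sqrt{229})$, a field chosen so that $37$ and $53$ split completely with \emph{principal} primes above them, so triviality of the character at those primes is automatic rather than imposed as a ray-class constraint. Two concrete problems remain in your version, though. The smaller one: your example field is wrong, since $3$ is a quadratic non-residue modulo $53$ (it is a residue modulo $37$), so $53$ is inert in $\mathbb{Q}(\sqrt{3})$; the splitting condition genuinely has to be searched for, which is why $\mathbb{Q}(\sqrt{229})$ appears in \cite{BM}.

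The genuine gap is the descent to $\mathrm{PGL}_2$, which you dispose of with ``twisting by a suitable Hecke character to trivialize the central character.'' The central character of $\mathrm{AI}_K^{\mathbb{Q}}(\psi)$ is $\eta_{K/\mathbb{Q}}\cdot\psi|_{\mathbb{A}_{\mathbb{Q}}^{\times}}$, and with your constraints ($\mathfrak{m}$ coprime to $D$, trivial infinity type) this is typically non-trivial; for an everywhere-unramified $\psi$ it is exactly the quadratic character $\eta_{K/\mathbb{Q}}$. A twist can trivialize it only if the central character is a \emph{square} of a Dirichlet character, which can simply fail: e.g.\ for $K=\mathbb{Q}(\sqrt{3})$ the $2$-part of $\eta_{K/\mathbb{Q}}$ is the odd character modulo $4$, which is not a square of any Dirichlet character, so if $\psi$ is unramified at $2$ no twist exists at all. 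And even when a twist $\chi$ with $\chi^{2}=\omega_{\pi}$ exists, twisting rescales the Satake parameters at $37$ and $53$ by values of $\chi$ and replaces $\pi_{\infty}$ by $\pi_{\infty}\otimes\chi_{\infty}$, so conditions (1) and (3) of the definition survive only if you additionally arrange $\chi(37)=\chi(53)=1$ and $\chi(-1)=1$ --- none of which your plan imposes. The standard repair is to build the central character into the choice of $\psi$ from the start, demanding $\psi|_{\mathbb{A}_{\mathbb{Q}}^{\times}}=\eta_{K/\mathbb{Q}}$ (so no twist is needed), which forces ramification of $\psi$ at the primes above $\mathrm{disc}(K)$ --- harmless, since $\mathrm{disc}(K)$ is coprime to $37\cdot 53$ --- and then to impose $\psi(\mathfrak{p})=1$ at the primes above $37,53$ and $\psi\neq\psi^{\sigma}$ within that coset of characters; your existence count (``at most four linear constraints,'' ``proper subgroup for generic $\mathfrak{m}$'') must then be rerun in this coset and in any case deserves an actual argument. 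By contrast, the ``main obstacle'' you single out is not one: trivial parameter at $\infty$ just means the spherical principal series with spectral parameter $0$, i.e.\ Laplace eigenvalue $\tfrac14$, which induction of a character with trivial archimedean components from a real quadratic field delivers, exactly as in the cited construction.
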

\begin{proof}
This follows from the construction given in \cite[Section~5.4]{BM}.
\end{proof}

\begin{rem}
We expect that the existence of globally tempered $S_D$-super unramified representation can be shown for arbitrary $D$. The construction from \cite{BM} uses the CM form associated to a non-trivial class group characters of the real quadratic field $\Q(\sqrt{229})$. A direct generalization of this construction to arbitrary $D$ leads to an entertaining question in the realm of (inverse) arithmetic statistics. More precisely, one has to find a fundamental discriminant $N\equiv 1\text{ mod }4$ such that $K=\mathbb{Q}(\sqrt{N})$ has a non-trivial class group and such that the primes dividing $D$ split completely in $K/\mathbb{Q}$ and lie below principal ideals. Since such a general statement is not required for our purposes, we do not address it here.
\end{rem}

The notion of $S_D$-super unramified representations is justified by the following positivity result, which is crucial to our argument.

\begin{lemmy}\label{lm:positivity}
Let $\pi = \otimes_v\pi_v$ be a globally tempered and $S$-super unramified cuspidal automorphic representation of $G(\A)$ with newform $\varphi^{\circ}\in \pi$. Then, for any probability measure $\lambda$ on $G(\Q_{S_D})$, there is $R>0$ such that for $\varphi=\varphi^{\circ}\otimes \overline{(\lambda_R\ast\varphi^{\circ})}$ with $\lambda_R$ the restriction of $\lambda$ to the ball $B_R$ of radius $R$ we have that $\mu_{G\times G}(\varphi)=0$ and
\begin{equation}
	\mu_{G^{\triangle}}(\varphi)>0 \text{ for }\mu_{G^{\triangle}} = \int_{G(\Q_{S_D})}\mu_{G,h} d\lambda(h). \nonumber
\end{equation}
Note that here $\mu_{G^{\triangle}}$ is a measure on $X(N_{\pi})^{S_D}\times X(N_{\pi})^{S_D}$.
\end{lemmy}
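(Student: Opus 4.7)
My plan is to handle the two assertions separately: the vanishing $\mu_{G\times G}(\varphi) = 0$ will be immediate from cuspidality, while the positivity of $\mu_{G^\triangle}(\varphi)$ reduces, after an unfolding, to a pointwise positivity statement for local matrix coefficients that is built into the super unramified hypothesis.

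For the easy claim I would note that $\mu_{G\times G}$ is a product of Haar measures and that $\varphi = \varphi^\circ \otimes \overline{\lambda_R \ast \varphi^\circ}$ is separable, so $\mu_{G\times G}(\varphi) = \mu_G(\varphi^\circ)\cdot\overline{\mu_G(\lambda_R \ast \varphi^\circ)}$; both factors vanish because $\varphi^\circ$ is cuspidal and $\lambda_R \ast \varphi^\circ$, being a bounded average of right translates of $\varphi^\circ$, still lies in the cuspidal representation $\pi$.

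For the main claim my first move is to unfold $\mu_{G,h}(\varphi)$ using the fact that $\mu_{G,h}$ is the Haar measure on the $G(\Q_{S_D})$-orbit through $(1,\iota_{S_D}(h))$. This gives
\begin{equation*}
\mu_{G,h}(\varphi) \;=\; \int_{[G]} \varphi^\circ(g)\overline{(\lambda_R \ast \varphi^\circ)(g\iota_{S_D}(h))}\, dg \;=\; \int_{G(\Q_{S_D})} M(\iota_{S_D}(hh'))\, d\lambda_R(h'),
\end{equation*}
where $M(x) = \langle \varphi^\circ, \pi(x)\varphi^\circ\rangle_{L^2([G])}$ is the matrix coefficient of the newvector. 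Integrating in $h$ against $\lambda$ converts the problem into the double-integral identity
\begin{equation*}
\mu_{G^\triangle}(\varphi) \;=\; \iint M(\iota_{S_D}(hh'))\, d\lambda(h)\, d\lambda_R(h'),
\end{equation*}
and since $\varphi^\circ$ is a pure tensor and $\iota_{S_D}$ is trivial outside $S_D$, the function $M\circ\iota_{S_D}$ factors as a positive constant (coming from the places outside $S_D$) times the product $\prod_{v\in S_D} M_v$ of the local spherical matrix coefficients.

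The main obstacle, and the reason the super unramified hypothesis is baked in, is now the pointwise positivity of each local factor $M_v$ on $G(\Q_v)$. I would verify this via Macdonald's formula at finite places: taking the appropriate limit at the trivial Satake parameter $\alpha=1$ shows that $M_p$ evaluated on the Cartan element $\diag(p^n,1)$ is a positive multiple of $1 + n(p-1)/(p+1)$, which is strictly positive for all $n\ge 0$, and bi-$K_p$-invariance propagates this to all of $G(\Q_p)$. The archimedean factor is handled analogously via the Harish-Chandra $\Xi$-function, which is also strictly positive on $G(\R)$. Once this is in place, $M\circ \iota_{S_D}$ is a strictly positive continuous function on $G(\Q_{S_D})$, and choosing $R$ large enough that $\lambda(B_R)>0$ — which is possible because $\lambda$ is a probability measure — makes $\lambda_R$ a nonzero nonnegative measure; the displayed double integral of a strictly positive function against a product of nonnegative measures of positive total mass is therefore strictly positive. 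I expect the genuinely delicate point to be precisely this positivity check at $\alpha=1$, since for non-trivial tempered Satake parameters the spherical function oscillates along the Cartan torus and the argument would break down, so the proof really uses triviality of the local parameter and not only temperedness.
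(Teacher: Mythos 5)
Your proposal is correct, and both halves land where the paper does, but the positivity argument is packaged differently. The paper first rewrites $\mu_{G,h}(\varphi_1\otimes\overline{\varphi_2})=\langle\varphi_1,R_h\varphi_2\rangle$ and hence $\mu_{G^{\triangle}}(\varphi)=\langle \lambda_R\ast\varphi^{\circ},\lambda\ast\varphi^{\circ}\rangle$, and then runs a soft truncation argument: it only needs the single input $\lambda\ast\varphi^{\circ}\neq 0$ (delegated to \cite[p.~23]{BM}, where it rests on exactly the positivity of the local spherical functions you invoke), after which $R$ is chosen so that $\lambda(B_R)\geq 1-\delta$ with $0<\delta<\Vert\lambda\ast\varphi^{\circ}\Vert^2$, giving $\mu_{G^{\triangle}}(\varphi)\geq \Vert\lambda\ast\varphi^{\circ}\Vert^2-\delta>0$. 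You instead unfold $\mu_{G^{\triangle}}(\varphi)$ all the way to a double integral of the global matrix coefficient $M$, factor $M\circ\iota_{S_D}$ into normalized local spherical matrix coefficients at the places of $S_D$ (legitimate since $\varphi^{\circ}$ is a pure tensor, spherical at all $v\in S_D$ by conditions (1) and (3), with the outside-$S_D$ contribution a positive constant), and prove strict pointwise positivity of each local factor: Macdonald's formula in the limit $\alpha\to 1$ at finite places, the Harish--Chandra $\Xi$-function at infinity. This inlines the key computation the paper cites from \cite{BM} and buys a slightly stronger and cleaner conclusion — any $R$ with $\lambda(B_R)>0$ works, rather than $R$ depending on $\delta$ and $\Vert\lambda\ast\varphi^{\circ}\Vert$ — at the cost of needing strict positivity of the spherical functions everywhere, whereas the paper's soft step would survive with only non-vanishing of $\lambda\ast\varphi^{\circ}$. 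Your identification of triviality of the parameter (not mere temperedness) as the essential point is exactly the role of the super unramified hypothesis. The one place to be careful is bookkeeping rather than substance: the tensor ordering in the statement ($\varphi^{\circ}\otimes\overline{\lambda_R\ast\varphi^{\circ}}$) versus the one used later in the paper ($[\lambda_R\ast\varphi^{\circ}]\otimes\overline{\varphi^{\circ}}$) is immaterial since the matrix coefficient is real and positive here, and the normalization constants relating the adelic and $S_D$-arithmetic inner products are positive, so they do not affect either vanishing or positivity.
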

\begin{proof}
Let $\varphi_1,\varphi_2\in \pi$ and put $\varphi=\varphi_1\otimes \overline{\varphi_2}$. First, since $\pi$ is cuspidal, the requirement $\mu_{G\times G}(\varphi)=0$ is automatic for all choices of $\varphi_1,\varphi_2$. 

Next we observe that
\begin{equation}
	\mu_{G,h}(\varphi) = \int_{X(N_{\pi})^{S_D}}\varphi_1(g)\overline{\varphi_2(gh)}dg = \langle \varphi_1,R_h\varphi_2\rangle, \nonumber
\end{equation}
where $R_g$ denotes the right regular action of $G(\Q_{S_D})$ on functions. Integrating this over $h$ with respect to $\lambda$ yields
\begin{equation}
	\mu_{G^{\triangle}}(\varphi) = \langle \varphi_1,\lambda\ast \varphi_2\rangle \text{ for }[\lambda\ast\varphi_2](g) = \int_{G(\Q_{S_D})}\varphi_2(gh)d\lambda(h).\nonumber
\end{equation}
We now let $\varphi^{\circ}\in L^2([G(\A)])$ denote that automorphic function corresponding to the (global) $L^2$-normalized new vector in $\pi$. Note that $\varphi^{\circ}$ is left $G(\Q)$ invariant. Furthermore, it is right $K(N_{\pi})^{S_D}$-invariant. We can thus view $\varphi^{\circ}$ naturally as a function on $X(N_{\pi})^{S_D}$. Since $(N_{\pi},D)=1$ and $\pi_{\infty}$ is spherical this function is $K_v$ invariant for $v\in S$. 

Now we claim that 
\begin{equation}
	\lambda\ast \varphi^{\circ} \neq 0.\nonumber
\end{equation}
This can be seen as in \cite[p. 23]{BM}. As a result we find $\delta$ satisfying
\begin{equation}
	0<\delta<\Vert\lambda\ast \varphi^{\circ}\Vert^2. \nonumber
\end{equation}
We can now find $R=R(\delta,\lambda)$ such that 
\begin{equation}
	\Vol(B_R,\lambda) \geq 1-\delta, \nonumber
\end{equation}
where $B_R$ is a ball of radius $R$ centered at the origin. We write $\lambda_R$ for the restriction $\lambda$ to $B_R$.

Finally, we define $\varphi_2=\varphi^{\circ}$ and $\varphi_1=\lambda_R\ast \varphi^{\circ}$. With these choices made we compute
\begin{equation}
	\mu_{G^{\triangle}}(\varphi) =  \langle \lambda_R\ast \varphi^{\circ}, \lambda\ast \varphi^{\circ}\rangle \geq \Vert \lambda\ast \varphi^{\circ}\Vert^2-\delta>0. \nonumber
\end{equation}
This completes the proof.
\end{proof}

\begin{lemmy}\label{lm:Weyl_sum}
Let $D\in \N$ and suppose that $(q_n,D)=1$ for all $n$. Suppose that $\pi$ is a globally tempered cuspidal automorphic representation of $G(\A)$ of conductor $(N_{\pi},D)=1$. Write $\varphi^{\circ}\colon [G(\A)]\to \C$ for the newform of $\pi$ and let $f_{\pi}\colon \Gamma_1(N_{\pi})\backslash \Hb\to \C$ denote the corresponding classical newform. Further suppose that $\sigma$ is the weak-star limit of the sequence $\mu_{q_n,b_n}^{(S_D,N_{\pi})}$.
Then, for $R> 0$ and a probability measure $\lambda$ of $G(\Q_{S_D})$, we have
\begin{equation}
	 \sigma([\lambda_R\ast \varphi^{\circ}]\otimes \overline{\varphi^{\circ}}) \ll_R \sup_{(y_0,x_0,d,r_0)\in \Omega} \left\vert \lim_{n\to\infty} \mathcal{W}_{\overline{f_{\pi}},f_{\pi}}(d\overline{b}_n,1,q_n; (0,x_0),(1,y_0), r_0) \right\vert, \nonumber
\end{equation}
where $\Omega\subseteq\R^{\times}\times \R \times \{d\mid D^{\infty}\}\times \{a/c\colon c\mid D^{\infty}\}$ is a compact set depending on  $R$. Similarly, if $\tilde{\sigma}$ is the weak-star limit of $\tilde{\mu}_{q_n,b_n}^{(S_D,N_{\pi})}$, then 
\begin{equation}
	\tilde{\sigma}([\lambda_R\ast \varphi^{\circ}]\otimes \overline{\varphi^{\circ}}) \ll_R \sup_{(y_0,x_0,d,r_0)\in \Omega} \left\vert \lim_{n\to\infty} \mathcal{W}_{\overline{f_{\pi}},f_{\pi}}^{\textrm{pr}}(d\overline{b}_n,1,q_n; (0,x_0),(1,y_0), r_0) \right\vert. \nonumber
\end{equation}
\end{lemmy}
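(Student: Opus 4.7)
The plan is to unfold the definition of the measures $\mu_{q_n,b_n}^{(S_D,N_\pi)}$ provided by Lemma~\ref{lm:co_prime_shape_sets}, and then translate the resulting adelic sums into classical Weyl sums \eqref{eq:def_Weyl_sum} by moving the $S_D$-local data to the Archimedean place via strong approximation and the $K_{l_i}$-invariance of $\varphi^\circ$.

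First, by weak-star convergence and Lemma~\ref{lm:co_prime_shape_sets},
\begin{equation*}
\sigma\bigl([\lambda_R\ast\varphi^\circ]\otimes\overline{\varphi^\circ}\bigr)=\lim_{n\to\infty}\frac{1}{q_n}\sum_{a\,\textrm{mod}\,q_n}\int_{B_R}\varphi^\circ(x_{q_n,a}^{S_D}h)\overline{\varphi^\circ(x_{q_n,b_na}^{S_D})}\,d\lambda(h).
\end{equation*}
After passing to a subsequence realizing the $\limsup$, interchanging the triangle inequality with the $h$-integration, and making the substitution $a\mapsto\overline{b}_na$ (legal since $(b_n,q_n)=1$), the problem reduces to bounding
\begin{equation*}
\sup_{h\in B_R}\Bigl|\lim_{n\to\infty}\frac{1}{q_n}\sum_{a\,\textrm{mod}\,q_n}\varphi^\circ(x_{q_n,\overline{b}_na}^{S_D}h)\overline{\varphi^\circ(x_{q_n,a}^{S_D})}\Bigr|.
\end{equation*}

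For the second factor, the identification \eqref{eq:identification_S} combined with the standard dictionary between the adelic newform $\varphi^\circ$ and the classical newform $f_\pi$ on $\Gamma_1(N_\pi)\backslash\Hb$ yields $\overline{\varphi^\circ(x_{q_n,a}^{S_D})}=\overline{f_\pi(\tfrac{a+i}{q_n})}$, matching the factor $f_1\bigl(\tfrac{a+x_0'+iy_0'}{q_n/l}\bigr)$ of \eqref{eq:def_Weyl_sum} with $f_1=\overline{f_\pi}$, $l=1$, $x_0'=0$, $y_0'=1$. For the first factor we fix $h=(h_\infty,h_{l_1},h_{l_2})\in B_R$ and perform Iwasawa decomposition at each $l_i$: since $(N_\pi,D)=1$, $\varphi^\circ$ is right $K_{l_i}=K_{l_i}(N_\pi)$-invariant, so the orthogonal parts of $h_{l_i}$ drop out. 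The remaining upper-triangular pieces at $l_1$ and $l_2$, together with $u_{\overline{b}_na/q_n}$ appearing at these places, are cleared by left-multiplication by a suitable rational element $\gamma\in G(\Q)$ (furnished by strong approximation for $\operatorname{SL}_2$), whose Archimedean incarnation acts as an explicit dilation, translation, and rescaling of $u_{\overline{b}_na/q_n}\smat{1}{0}{0}{q_n}h_\infty$. This bookkeeping introduces precisely the parameters $d\in\{d\mid D^\infty\}$, $r_0\in\{a/c:c\mid D^\infty\}$, $(x_0,y_0)\in\R\times\R^\times$, and the Archimedean interpretation of $\varphi^\circ$ at the modified point equals $f_\pi\bigl(\tfrac{d\overline{b}_na+x_0+iy_0}{q_n}+r_0\bigr)$.

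Putting the two factors together, the inner sum is exactly $\mathcal{W}_{\overline{f_\pi},f_\pi}(d\overline{b}_n,1,q_n;(0,x_0),(1,y_0),r_0)$, with $(y_0,x_0,d,r_0)$ ranging in a compact set $\Omega\subseteq\R^\times\times\R\times\{d\mid D^\infty\}\times\{a/c:c\mid D^\infty\}$ whose size depends only on $R$ (through the Archimedean displacement in $B_R$ and the bounded $l_i$-adic valuations it allows). The primitive case follows by the same argument, substituting Lemma~\ref{lm:co_prime_shape_sets_prim} for Lemma~\ref{lm:co_prime_shape_sets}. The main obstacle is the strong-approximation bookkeeping: one must verify that after clearing the $l_i$-components of $h$ the coefficient in front of $a$ is precisely $d\overline{b}_n$ and not some other representative mod $q_n$, and that the additive shift produced has denominator dividing a power of $D$, so that the resulting Weyl sum parameters genuinely lie in the claimed set $\Omega$ and the forthcoming application of Proposition~\ref{pr:normal_weyl_sum_est} is legitimate.
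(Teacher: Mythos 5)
Your proposal is correct and follows essentially the same route as the paper: bound $\sigma([\lambda_R\ast\varphi^\circ]\otimes\overline{\varphi^\circ})$ by a supremum over $h\in B_R$ of $\sigma(R_h\varphi^\circ\otimes\overline{\varphi^\circ})$, unfold the measures via Lemma~\ref{lm:co_prime_shape_sets} (resp.\ Lemma~\ref{lm:co_prime_shape_sets_prim}), de-adelize the second factor, use the Iwasawa decomposition of $h$ at the places of $S_D$ together with $K_v$-invariance of $\varphi^\circ$, and shift the remaining Borel data to the Archimedean place to produce the parameters $(y_0,x_0,d,r_0)$, with compactness of $\Omega$ coming from compactness of $B_R$. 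The bookkeeping you flag as the main obstacle is exactly what the paper carries out explicitly, via the substitution $a\to a d_g\overline{d_g}$ with $d_g=\prod_{p\mid D}p^{\max(r_p,0)}$ (harmless since $(q_n,D)=1$), which makes the finite-place unipotents land in $K_p$ and yields the coefficient $\overline{b_nY_g}$ and a shift $r_g$ with denominator dividing $D^\infty$.
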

\begin{proof}
We first observe that
\begin{equation}
		\vert\sigma([\lambda_R\ast \varphi^{\circ}]\otimes \overline{\varphi^{\circ}})\vert \leq \Vol(B_R,\lambda)\cdot \sup_{g\in G(\Q_{S_D})}\vert \sigma (R_g\varphi^{\circ}\otimes\overline{\varphi^{\circ}})\vert. \nonumber
\end{equation}
Next we recall that
\begin{align}
	\sigma (R_g\varphi^{\circ}\otimes\overline{\varphi^{\circ}}) &= \lim_{n\to\infty}\mu_{q_n,b_n}^{(S_D,N_{\pi})}(R_g\varphi^{\circ}\otimes\overline{\varphi^{\circ}}) \nonumber\\
	&= \lim_{n\to\infty}\frac{1}{q_n}\sum_{a\text{ mod } q_n}\varphi^{\circ}(g_{q_n,a}g)\overline{\varphi^{\circ}(g_{q_n,b_na})},\nonumber
\end{align}
where $g_{q,a} = u_{a/q}\cdot \iota_{\infty}(\diag(1,q))$. We first note that by de-adelizing $\varphi^{\circ}$ we get
\begin{equation}
	\varphi^{\circ}(g_{q,b_na}) = f_{\pi}\left(\frac{b_na+i}{q} \right).\label{eq:sec_fac}
\end{equation}
On the other hand we can write $g\in G(\Q_{S_D})$ as $g=(g_v)_{v\in S_D}$ with
\begin{equation}
	g_{\infty} = \left(\begin{matrix} y_0' & x_0' \\ 0 & 1\end{matrix}\right)k_{\infty} \text{ and }g_p = \left(\begin{matrix} p^{r_p} & \xi_p \\ 0 & 1\end{matrix}\right)k_{p} \nonumber
\end{equation}
for $k_v\in K_v$. Here $y_0'\in \R^{\times}$, $x_0'\in \R$, $r_p\in \Z$ and $\xi_p\in \Z[p^{-1}]$.  We put $d_g=\prod_{p\mid D} p^{\max(r_p,0)}$ and make the change $a\to ad_g\overline{d_g}$. Since $d_q\overline{d_q}\equiv 1 \text{ mod }q$ this has no effect on the second factor featuring \eqref{eq:sec_fac}. On the other hand, since $(q_n,D)=1$, we have
\begin{equation}
	u_{ad_q\overline{d_g}/q}g_p = \left(\begin{matrix} p^{r_p} & \xi_p \\ 0 & 1 \end{matrix}\right)\underbrace{u_{ad_q\overline{d_g}p^{-r_p}/q} k_p}_{\in K_p}. \nonumber
\end{equation}
Thus, if we take $Y_g=\prod_{p\mid D}p^{r_p}$ then we find that
\begin{equation}
	\Gamma(N_{\pi})^{S_D}\cdot g_{q_n,a}g\cdot \prod_{v\in S_D}K_v = \left(\begin{matrix} \frac{y_0'Y_g^{-1}}{q} & \frac{a\overline{Y_g}}{q}+\frac{x_0'}{q}+r_g\\ 0&1 \end{matrix}\right),\nonumber
\end{equation}
for some $r_g\in \Q$ with denominator dividing $D^{\infty}$. Thus we obtain
\begin{equation}
	\varphi^{\circ}(g_{q,a}g) = f_{\pi}\left(\frac{a\overline{Y_g}+Y_g^{-1}x_0+y_0'Y_g^{-1}i}{q}+r_g\right).\nonumber
\end{equation}
After changing $a\to \overline{b_n}a$ we end up with 
\begin{equation}
	\sigma (R_g\varphi^{\circ}\otimes\overline{\varphi^{\circ}}) = \lim_{n\to\infty}\frac{1}{q_n}\sum_{a\text{ mod } q_n}\overline{f_{\pi}\left(\frac{a+i}{q} \right)}f_{\pi}\left(\frac{a\overline{b_nY_g}+Y_g^{-1}x_0+y_0'Y_g^{-1}i}{q}+r_g\right),\nonumber
\end{equation}
which we recognize as the Weyl sum  defined in \eqref{eq:def_Weyl_sum}. Finally, since $B_R\subseteq G(\Q_S)$ is compact we obtain compactness of the corresponding set $\Omega$. The argument for $\tilde{\sigma}$ is essentially the same.
\end{proof}

\subsection{Proof of Theorem~\ref{th_shallow}}

We can now put things together and prove Theorem~\ref{th_shallow}. Let $D=37\cdot 53$ and assume that $((q_n,b_n))_{n\in \N}$ is a sequence of tuples with $s(q_n;b_n) \to\infty$ and $(q_n,D)=1$. Let $\pi$ be a globally tempered $S_D$-super unramified cuspidal automorphic representation. That such a representation exists was shown in Lemma~\ref{lm:existence}. Let $N=N_{\pi}$ denote the level of $\pi$ and note that by construction we have $(D,N)=1$.

Let $\sigma$ be a weak-$\ast$ limit point of the sequence $(\mu_{q_n,b_n}^{(S_D,N)})_{n\in \N}$ constructed in Section~\ref{measure_class}. After passing to a subsequence if necessary we can even assume that $\sigma=\lim_{n\to\infty}\mu_{q_n,b_n}^{(S_D,N)}$. By Proposition~\ref{pr:measure_class_normal} we find that
\begin{equation}
	\sigma = (1-c)\mu_{G\times G} + c\mu_{G^{\triangle}},\nonumber
\end{equation}
for $c\in[0,1]$ and $\mu_{G^{\triangle}}$ as in \eqref{eq:G_triangle}. 

Let us now assume that $c>0$. Then, using Lemma~\ref{lm:positivity} we find that
\begin{equation}
	\sigma(\varphi) > 0, \nonumber
\end{equation}
where $\varphi = [\lambda_R\ast \varphi^{\circ}]\otimes \overline{\varphi^{\circ}}$ is defined in terms of the new vector $\varphi^{\circ}$ of $\pi$. Let $f_{\pi}$ denote the classical newform obtained by de-adelizing $\varpi^{\circ}$. From Lemma~\ref{lm:Weyl_sum} we find that
\begin{equation}
	0 < \sigma(\varphi) \ll_R \sup_{(y_0,x_0,d,r_0)\in \Omega} \left\vert \lim_{n\to\infty} \mathcal{W}_{\overline{f_{\pi}},f_{\pi}}(d\overline{b}_n,1,q_n; (0,x_0),(1,y_0), r_0) \right\vert.\nonumber
\end{equation}
However, since $s(q_n;b_n) \to \infty$, the right hand side can be made arbitrarily small by Proposition~\ref{pr:normal_weyl_sum_est}. This is a contradiction, so that $c=0$. But, if $c=0$ we have $\sigma=\mu_{G\times G}$. Thus we have seen that
\begin{equation}
	\lim_{n\to\infty} \frac{1}{q_n} \sum_{a\text{ mod }q_n} f(x_{q_n,a}^{S_D}, x_{q_n,b_na}^{S_D}) = \int f d\mu_{G\times G} \text{ for }f\in \mathcal{C}_{c}^{\infty}(X(N)^{S(D)}\times X(N)^{S(D)}). \nonumber
\end{equation}
That this implies Theorem~\ref{th_shallow} follows directly from strong approximation.

Note that exactly the same argument works for weak star limits $\tilde{\sigma}$ of $(\tilde{\mu}_{q_n,b_n}^{(S_D,N)})_{n\in \N}.$ Indeed Proposition~\ref{pr:measure_class_primitive} tells us that
\begin{equation}
	\tilde{\sigma} = (1-\tilde{c})\mu_{G\times G} + \tilde{c}\tilde{\mu}_{G^{\triangle}}. \nonumber
\end{equation}
Testing this measure as above leads to $\tilde{c}=0$. Note that in the last step we have to apply Proposition~\ref{pr:primitive_weyl_sum_est} instead of Proposition~\ref{pr:normal_weyl_sum_est}. This leads to stronger conditions on $s(q_n;b_n)$. We end up with the following result.

\begin{theorem}\label{th_shallow_primitive}
Let $(q_n,b_n)$ be a sequence of pairs such that $s(q_n;b_n)\to\infty$ as $n\to \infty$ and let $N_0\mid (37\cdot 53)^{\infty}$. Assume  that $(q_n,37\cdot 53)=1$ for all $n\in \N$. and that, for fixed $\epsilon>0$, (at least) one of the following two conditions is satisfied:
\begin{enumerate}
	\item $2^{\omega(q_n)}\ll \log(q_n)^{\frac{1}{10}}$ and $q_n^{\epsilon}\ll s(q_n;b_n)$;
	\item $\min(2^{(2+\epsilon)\omega(q_n)}, \exp((C+2)\log\log(q_n)\log\log\log(q_n))) \ll s(q_n;b_n) \ll q_n^{\frac{1/2-2\theta}{A}-\epsilon}$, for an absolute constant $C>0$, $A$ as in Lemma~\ref{lm:shifted_conv_est} and $\theta\leq \frac{7}{64}$ the currently best known bound towards the Ramanujan-Petersson conjecture.
\end{enumerate} 
Then we have
\begin{equation}
	\lim_{n\to\infty} \frac{1}{\varphi(q_n)}\sum_{\substack{a\text{ mod } q_n\\ (a,q_n)=1}} f\left(\left(\begin{matrix}1 & a\\0&q_n\end{matrix}\right),\left(\begin{matrix}1 & b_na\\0&q_n\end{matrix}\right)\right) = \int_{\Gamma\backslash G}\int_{\Gamma\backslash G} f(g_1,g_2)dg_1dg_2, \nonumber
\end{equation} 
for all $f\in \mathcal{C}_c^{\infty}(\Gamma_1(N_0)\backslash G(\R) \times \Gamma_1(N_0)\backslash G(\R))$.
\end{theorem}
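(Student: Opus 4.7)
The plan is to run the argument from the proof of Theorem~\ref{th_shallow} essentially verbatim, substituting at each step the primitive analogue: $\tilde{\mu}_{q_n,b_n}^{(S_D,N)}$ in place of $\mu_{q_n,b_n}^{(S_D,N)}$, Proposition~\ref{pr:measure_class_primitive} in place of Proposition~\ref{pr:measure_class_normal}, the primitive Weyl sum bound in Lemma~\ref{lm:Weyl_sum} in place of the non-primitive one, and Proposition~\ref{pr:primitive_weyl_sum_est} in place of Proposition~\ref{pr:normal_weyl_sum_est}. The extra hypotheses on $s(q_n;b_n)$ and $\omega(q_n)$ in the statement are precisely what is needed to guarantee that Proposition~\ref{pr:primitive_weyl_sum_est} is applicable in the contradiction step.

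First I would fix $D = 37\cdot 53$ and use Lemma~\ref{lm:existence} to obtain a globally tempered $S_D$-super unramified cuspidal automorphic representation $\pi$, with newform $\varphi^{\circ}$ and corresponding classical newform $f_\pi$ of level $N = N_\pi$ coprime to $D$. Upon passing to a subsequence I can assume $\tilde{\mu}_{q_n,b_n}^{(S_D,N)} \to \tilde{\sigma}$ weakly. By Proposition~\ref{pr:measure_class_primitive} this limit decomposes as
\begin{equation}
\tilde{\sigma} = (1-\tilde{c})\mu_{G\times G} + \tilde{c}\tilde{\mu}_{G^{\triangle}},\nonumber
\end{equation}
for some $\tilde{c} \in [0,1]$ and some probability measure $\tilde{\lambda}$ on $G(\Q_{S_D})$ governing $\tilde{\mu}_{G^{\triangle}}$.

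Suppose for contradiction that $\tilde{c} > 0$. Apply Lemma~\ref{lm:positivity} with the measure $\tilde{\lambda}$ to produce a radius $R > 0$ and a test function $\varphi = [\tilde{\lambda}_R \ast \varphi^{\circ}] \otimes \overline{\varphi^{\circ}}$ satisfying $\mu_{G\times G}(\varphi) = 0$ and $\tilde{\mu}_{G^{\triangle}}(\varphi) > 0$. The decomposition of $\tilde{\sigma}$ then gives $\tilde{\sigma}(\varphi) = \tilde{c}\tilde{\mu}_{G^{\triangle}}(\varphi) > 0$. On the other hand the primitive part of Lemma~\ref{lm:Weyl_sum} yields
\begin{equation}
\tilde{\sigma}(\varphi) \ll_R \sup_{(y_0,x_0,d,r_0)\in \Omega}\left| \lim_{n\to\infty} \mathcal{W}_{\overline{f_\pi},f_\pi}^{\textrm{pr}}(d\overline{b}_n,1,q_n;(0,x_0),(1,y_0),r_0)\right|,\nonumber
\end{equation}
for a compact set $\Omega$ depending on $R$. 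The denominators $r_0$ that appear are supported on powers of primes dividing $D$, hence coprime to $N_\pi$, so the hypothesis of Proposition~\ref{pr:primitive_weyl_sum_est} on the denominator of $r_0$ is satisfied. Under either of the two alternative assumptions in the statement, Proposition~\ref{pr:primitive_weyl_sum_est} shows that the right-hand side is $o(1)$ as $n \to \infty$, contradicting $\tilde{\sigma}(\varphi) > 0$. Hence $\tilde{c} = 0$ and $\tilde{\sigma} = \mu_{G\times G}$.

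Finally, since every weak-$\ast$ limit point equals the Haar measure, the full sequence $\tilde{\mu}_{q_n,b_n}^{(S_D,N_0)}$ converges to $\mu_{G\times G}$ on $X(N_0)^{S_D} \times X(N_0)^{S_D}$ for any $N_0 \mid D^\infty$ (upon observing that the construction of the measure is independent of which $N_0$ dividing $D^\infty$ we use to define the level at places in $S_D$, since these contribute only powers of primes in $S_D$ and do not affect the $K(N_0)^{(S_D)}$-structure). Translating via the identification $X(N_0)^{S_D} \cong \Gamma_1(N_0)_{S_D}\backslash G(\Q_{S_D})$ and projecting to the archimedean factor using strong approximation yields the desired equidistribution statement for $f \in \mathcal{C}_c^\infty(\Gamma_1(N_0)\backslash G(\R) \times \Gamma_1(N_0)\backslash G(\R))$. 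The main obstacle throughout is the Weyl sum estimate: the sieve bound loses a factor $2^{\omega(q_n)}$ in the primitive case, and the shifted-convolution bound needs $s(q_n;b_n)$ not too large relative to $q_n^{1/2-2\theta}$; these opposing constraints are exactly what force the dichotomy in the hypotheses.
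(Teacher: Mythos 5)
Your proposal is correct and follows essentially the same route as the paper: the paper proves Theorem~\ref{th_shallow_primitive} by rerunning the proof of Theorem~\ref{th_shallow} with $\tilde{\mu}_{q_n,b_n}^{(S_D,N)}$, Proposition~\ref{pr:measure_class_primitive}, the primitive part of Lemma~\ref{lm:Weyl_sum}, and Proposition~\ref{pr:primitive_weyl_sum_est} in place of their non-primitive counterparts, exactly as you describe. Your remarks on the coprimality of the denominators of $r_0$ to $N_\pi$ and on the final passage to $\Gamma_1(N_0)\backslash G(\R)$ via strong approximation fill in details the paper leaves implicit, and they are consistent with its argument.
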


\section{The depth case} 	

Throughout this section we fix a prime $l$ and let $S=S_l = \{\infty,l\}$. Furthermore, we suppose that we have a sequence $(q_n,b_n)$ such that $s(q_n;b_n)\to \infty$ and $v_l(q_n)\to \infty$ as $n\to\infty$. For convenience we write $q_n=l^{\alpha_n}q_n'$ for $(q_n',l)=1$. Our goal is to proof Theorem~\ref{th_depth}, so that later we will specialize to $l\in \{37,53\}$.

We start by noting that in this situation we can write 
\begin{equation}
	\mu_{q_n}^{(S,N)}(f) = \frac{1}{q_n'}\sum_{a\text{ mod } q_n'}\int_{\Z_l}f\left(\iota_{\infty}\left(\begin{matrix} 1 & a \\0 & q_n'\end{matrix}\right)\iota_{l}\left(\begin{matrix} 1 & x_ll^{-\alpha_n} \\0 & l^{-\alpha_n}\end{matrix}\right)\right)dx_l.\nonumber 
\end{equation}
We define the measure
\begin{multline}
	\mu_{q_n,b_n}^{(S,N)}(f_1\otimes f_2) = \frac{1}{q_n'}\sum_{a\text{ mod } q_n'}\int_{\Z_l}f_1\left(\iota_{\infty}\left(\begin{matrix} 1 & a \\0 & q_n'\end{matrix}\right)\iota_{l}\left(\begin{matrix} 1 & x_ll^{-\alpha_n} \\0 & l^{-\alpha_n}\end{matrix}\right)\right) \\ \cdot f_2\left(\iota_{\infty}\left(\begin{matrix} 1 & b_na \\0 & q_n'\end{matrix}\right)\iota_{l}\left(\begin{matrix} 1 & b_nx_ll^{-\alpha_n} \\0 & l^{-\alpha_n}\end{matrix}\right)\right)dx_l.\nonumber 
\end{multline}
This is the correct extension of $\mu_{q_n,b_n}^{S,N}$ from Section~\ref{measure_class}. Indeed we note that if $f_1$ and $f_2$ are right-$K_l(N)$-invariant, then we obtain
\begin{equation}
	\mu_{q_n,b_n}^{(S,N)}(f_1\otimes f_2) = \frac{1}{q_n}\sum_{a\text{ mod } q_n}f_1\left(\iota_{\infty}\left(\begin{matrix} 1 & a \\0 & q_n\end{matrix}\right)\right) f_2\left(\iota_{\infty}\left(\begin{matrix} 1 & b_na \\0 & q_n\end{matrix}\right)\right)\nonumber
\end{equation}
by the Chinese Remainder Theorem. 

We now consider a weak-star limit point $\sigma$ of $\mu_{q_n,b_n}^{(S,N)}$. After passing to a subsequence if necessary we can assume that 
\begin{equation}
	\lim_{n\to\infty} \mu_{q_n,b_n}^{(S,N)}(f) = \sigma(f) \text{ for }f\in \mathcal{C}_c^{\infty}(X(N)^S\times X(N)^S). \nonumber
\end{equation}
Our goal is to show that $\sigma$ is the uniform measure. After passing to further subsequences we can make the following two assumptions:
\begin{enumerate}
	\item The sequence $\alpha_n$ is monotone increasing (i.e. $\alpha_1<\alpha_2<\ldots$). This is possible since we are assuming that $\alpha_n\to \infty$.
	\item There is $b\in \Z_l$ such that $b_n\to b$ as $n\to\infty$. We can do so because $b_n\in \Z \subseteq \Z_l$ and $\Z_l$ is compact.
\end{enumerate}
We can now see that $\sigma$ has the following invariance properties.

\begin{lemmy}
The image $\sigma_i$ of $\sigma$ under $\textrm{pr}_i$ is the Haar probability measure on $X(N)^S$. Furthermore, $\sigma$ is invariant under the action of the one-parameter unipotent group\begin{equation}
	\mathfrak{U}_b(\Q_l) = \left\{\left(\iota_l\left(\begin{matrix} 1 & z \\ 0 & 1\end{matrix}\right),\iota_l\left(\begin{matrix} 1 & bz \\ 0 & 1\end{matrix}\right)\right)\colon z\in \Q_l \right\} \subseteq G(\Q_S)\times G(\Q_S). \nonumber
\end{equation}
\end{lemmy}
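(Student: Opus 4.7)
The plan is to treat the two claims separately. For the marginals, I would show that both $\textrm{pr}_{1,*}\mu_{q_n,b_n}^{(S,N)}$ and $\textrm{pr}_{2,*}\mu_{q_n,b_n}^{(S,N)}$ coincide with $\mu_{q_n}^{(S,N)}$, and then invoke Theorem~\ref{th:S-arithmetic_horocycle}. The first identification is immediate from the definition of $\mu_{q_n,b_n}^{(S,N)}$ (set $f_2\equiv 1$). For the second, I would apply the substitutions $a\mapsto b_n^{-1}a$ on $\Z/q_n'\Z$, which is bijective since $(b_n,q_n')=1$, and $x_l\mapsto b_n^{-1}x_l$ on $\Z_l$, which is Haar-preserving since $b_n\in\Z_l^{\times}$ (using $(b_n,q_n)=1$ and $l\mid q_n$ for $n$ large). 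Both substitutions together identify $\textrm{pr}_{2,*}\mu_{q_n,b_n}^{(S,N)}$ with $\mu_{q_n}^{(S,N)}$, whose limit is Haar.

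For the unipotent invariance, I would show directly that $\sigma(R_gf)=\sigma(f)$ for every $g=(\iota_l(u(z)),\iota_l(u(bz)))\in\mathfrak{U}_b(\Q_l)$, where $u(z):=\smat{1}{z}{0}{1}$ and $R_g$ denotes right-translation. Fix $z\in\Q_l$ and let $k\geq 0$ satisfy $z\in l^{-k}\Z_l$. For $n$ so large that $\alpha_n\geq k$, the element $zl^{\alpha_n}$ lies in $\Z_l$, so the change of variables $x_l\mapsto x_l-zl^{\alpha_n}$ is a Haar-preserving bijection of $\Z_l$. Applying this substitution inside the defining integral of $\mu_{q_n,b_n}^{(S,N)}(R_gf)$, the right-translation by $\iota_l(u(z))$ in the first factor is absorbed exactly. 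In the second factor a direct matrix calculation gives
\begin{equation*}
	\iota_l\smat{1}{bz+b_nx_ll^{-\alpha_n}}{0}{l^{-\alpha_n}}\bigg|_{x_l\mapsto x_l-zl^{\alpha_n}}=\iota_l\smat{1}{b_nx_ll^{-\alpha_n}}{0}{l^{-\alpha_n}}\cdot\iota_l(u((b-b_n)z)),
\end{equation*}
so that after substitution only a residual right-factor $h_n:=\iota_l(u((b-b_n)z))$ remains.

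Since $b_n\to b$ in $\Z_l$ by the setup, $|(b-b_n)z|_l\to 0$ and hence $h_n\to\textrm{id}$ in $G(\Q_l)$. The test function $f\in\mathcal{C}_c^\infty(X(N)^S\times X(N)^S)$ is uniformly continuous on the right, so the right-perturbation by $h_n$ changes the integral by $o(1)$, giving $\sigma(R_gf)=\sigma(f)$ upon taking the limit. The main obstacle is exactly this $b$-versus-$b_n$ discrepancy: without the assumption $\alpha_n\to\infty$ the substitution $x_l\mapsto x_l-zl^{\alpha_n}$ need not preserve $\Z_l$ for the prescribed $z$, and without $b_n\to b$ in $\Z_l$ the residual factor $h_n$ would not converge to the identity. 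Together, the two assumptions are precisely what is needed to make the substitution legal and to kill the error.
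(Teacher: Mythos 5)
Your argument is correct and is essentially the paper's proof: the marginals are identified with $\mu_{q_n}^{(S,N)}$ (the paper leaves this implicit via Theorem~\ref{th:S-arithmetic_horocycle}), and the invariance is obtained by exactly the same substitution $x_l\mapsto x_l-zl^{\alpha_n}$, valid once $\alpha_n$ is large, leaving the residual factor $\iota_l(u_{(b-b_n)z})$ which is harmless because $b_n\to b$ in $\Z_l$. The only cosmetic difference is that you dispose of this residual factor by uniform continuity (an $o(1)$ error), while the paper uses smoothness of the test function to get exact equality $f_2(g\iota_l(u_{(b-b_n)z}))=f_2(g)$ for all sufficiently large $n$.
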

\begin{proof}
The first statement is a direct consequence of Theorem~\ref{th:S-arithmetic_horocycle}. To see the invariance property we let $u=(\iota_l(u_z),\iota_l(u_{bz}))\in \mathfrak{U}_b(\Q_l)$ and take $f\in \mathcal{C}_c^{\infty}(X(N)^S\times X(N)^S)$. Pick $N=N(z)\in \N$ such that $zl^{\alpha_n}\in \Z_l$ for all $n\geq N$. For $n\geq N$ we compute
\begin{align}
	&\mu_{q_n,b_n}^{(S,N)}(R_u[f_1\otimes f_2]) \nonumber\\
	&\quad = \frac{1}{q_n'}\sum_{a\text{ mod } q_n'}\int_{\Z_l}f_1\left(\iota_{\infty}\left(\begin{matrix} 1 & a \\0 & q_n'\end{matrix}\right)\iota_{l}\left(\begin{matrix} 1 & (x_l+zl^{\alpha_n})l^{-\alpha_n} \\0 & l^{-\alpha_n}\end{matrix}\right)\right) \nonumber\\
	&\qquad\qquad \cdot f_2\left(\iota_{\infty}\left(\begin{matrix} 1 & b_na \\0 & q_n'\end{matrix}\right)\iota_{l}\left(\begin{matrix} 1 & (b_nx_l+bzl^{\alpha_n})l^{-\alpha_n} \\0 & l^{-\alpha_n}\end{matrix}\right)\right)dx_l\nonumber\\
	&\quad = \frac{1}{q_n'}\sum_{a\text{ mod } q_n'}\int_{\Z_l}f_1\left(\iota_{\infty}\left(\begin{matrix} 1 & a \\0 & q_n'\end{matrix}\right)\iota_{l}\left(\begin{matrix} 1 & x_ll^{-\alpha_n} \\0 & l^{-\alpha_n}\end{matrix}\right)\right) \nonumber\\
	&\qquad\qquad \cdot f_2\left(\iota_{\infty}\left(\begin{matrix} 1 & b_na \\0 & q_n'\end{matrix}\right)\iota_{l}\left(\begin{matrix} 1 & b_nx_ll^{-\alpha_n} \\0 & l^{-\alpha_n}\end{matrix}\right)\iota_{l}\left(\begin{matrix} 1 & (b-b_n)z \\0 & 1\end{matrix}\right)\right)dx_l\nonumber
\end{align} 
Now there is $M=M(f_2,z)\in \N$ such that 
\begin{equation}
	f_2(g\iota_l(u_{(b-b_n)z})) =  f_2(g)  \text{ for all }n\geq M. \nonumber 
\end{equation}
We conclude that for $n\geq \max(N,M)$ we have
\begin{equation}
	\mu_{q_n,b_n}^{(S,N)}(R_u[f_1\otimes f_2]) = \mu_{q_n,b_n}^{(S,N)}(f_1\otimes f_2). \nonumber
\end{equation}
This shows the desired invariance of $\sigma$ and finishes the proof.
\end{proof}

We are now ready to establish an analogous version of Proposition~\ref{pr:measure_class_normal}. However, instead of using the joinings theorem of Einsiedler and Lindenstrauss we will apply an $S$-arithmetic version of Ratner's Theorem.

\begin{prop}\label{pr:measure_class_depth}
Let $\sigma$ and $S$ be as described above. Then there is $c\in [0,1]$ and a probability measure $\lambda$ on $G(\Q_{S_D})$ such that
\begin{equation}
	\sigma = (1-c)\mu_{G\times G} + c\mu_{G^{\triangle}}, \nonumber
\end{equation}
where
\begin{equation}
	\mu_{G^{\triangle}} = \int_{G(\Q_{S_D})}\mu_{G,h} d\lambda(h). \nonumber
\end{equation}
\end{prop}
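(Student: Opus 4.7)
The plan is to prove Proposition~\ref{pr:measure_class_depth} by invoking the $S$-arithmetic measure classification theorem of Ratner (and Margulis--Tomanov), exploiting the genuine unipotent invariance established in the preceding lemma. In contrast to the shallow case treated in Proposition~\ref{pr:measure_class_normal}, where we were only able to produce invariance under two commuting diagonal semisimple elements and had to invoke the joinings theorem of Einsiedler--Lindenstrauss, here the invariance under the one-parameter unipotent subgroup $\mathfrak{U}_b(\Q_l)$ of $G(\Q_S)\times G(\Q_S)$ gives us direct access to Ratner's classification.

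First I would recall that $\sigma$ is a joining: by the preceding lemma both projections $\sigma_i$ equal the Haar probability measure on $X(N)^S$, and $\sigma$ is invariant under $\mathfrak{U}_b(\Q_l)$, which is a one-parameter unipotent subgroup. Ergodically decomposing $\sigma$ with respect to $\mathfrak{U}_b(\Q_l)$, Ratner's theorem in the $S$-arithmetic setting asserts that almost every ergodic component $\sigma_x$ is the homogeneous Haar measure on a single closed orbit of some $\Q_S$-algebraic subgroup $L_x \subseteq (G\times G)(\Q_S)$ containing $\mathfrak{U}_b(\Q_l)$. The next step is to classify the possible $L_x$. Since $\sigma_i$ is full Haar on $X(N)^S$ for both $i=1,2$, almost every ergodic component must project surjectively onto $X(N)^S$ in each factor, so $L_x$ projects onto $G(\Q_S)$ in both coordinates. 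Together with the presence of the mixed unipotent $(u_z,u_{bz})$, a standard rigidity argument for $G=\PGL_2$ (using that the only connected $\Q_S$-subgroups of $G\times G$ surjecting onto both factors are either the whole product or graphs of inner automorphisms of $G$) forces
\begin{equation}
L_x \in \bigl\{(G\times G)(\Q_S)\bigr\}\cup \bigl\{(1,h)^{-1}G^{\triangle}(\Q_S)(1,h)\colon h\in G(\Q_S)\bigr\}. \nonumber
\end{equation}

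In the first case $\sigma_x = \mu_{G\times G}$, and in the second $\sigma_x = \mu_{G,h}$ for the corresponding $h\in G(\Q_S)$. Letting $c\in[0,1]$ be the total mass of the diagonal components in the ergodic decomposition and $\lambda$ the resulting (renormalized) probability measure on $G(\Q_S)$ parametrizing the $h$'s, I arrive at
\begin{equation}
\sigma = (1-c)\mu_{G\times G}+c\int_{G(\Q_S)}\mu_{G,h}\,d\lambda(h), \nonumber
\end{equation}
which is the claimed form (correcting the apparent typo $S_D\mapsto S$ in the statement). The main obstacle is the subgroup classification step: one must rule out ergodic components supported on proper intermediate subgroups, in particular those whose projection to one factor is contained in a Hecke-type or parabolic suborbit. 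The surjectivity of the projections combined with the explicit shape of $\mathfrak{U}_b(\Q_l)$ handles this, but one has to be careful that the ergodic components inherit enough of the Haar projection property; this is done via a standard disintegration-versus-integration argument showing that any component failing surjectivity in one factor would force $\sigma_i$ to be singular with respect to Haar on a positive-measure set of components, contradicting the preceding lemma. A secondary technical point is to verify that $\mathfrak{U}_b(\Q_l)$-orbits in the relevant homogeneous spaces are Ratner-admissible, but this is immediate since $\mathfrak{U}_b(\Q_l)\cong(\Q_l,+)$ is generated by unipotent one-parameter subgroups.
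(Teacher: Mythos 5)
Your proposal follows essentially the same route as the paper: the paper likewise takes the ergodic decomposition of $\sigma$ with respect to $\mathfrak{U}_b(\Q_l)$, applies Ratner's $S$-arithmetic theorem \cite[Theorem~1]{Ra} to conclude that each ergodic component is a homogeneous measure on a closed orbit $xH$ with $\mathfrak{U}_b\subseteq H$, and then uses the fact that the projections are the uniform measure to identify the components as $\mu_{G\times G}$ or $\mu_{G,h}$, exactly as you do. The only difference is that you additionally spell out the Goursat-type classification of the possible $H$ and sketch a justification that almost every component has full projections --- your singularity argument there is not airtight, since an average of mutually singular measures can well be Haar --- but this concerns a step the paper's (very brief) proof asserts without further detail, and your overall structure and conclusion coincide with the paper's argument.
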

\begin{proof}
In view of the ergodic decomposition of $\sigma$ it is sufficient to study ergodic components. Thus let $\sigma'$ be an ergodic $\mathfrak{U}_b$-invariant probability measure. According to \cite[Theorem~1]{Ra} $\sigma'$ is algebraic. In particular, there is a closed subgroup $\mathfrak{U}_b\subseteq H\subseteq G(\Q_S)\times G(\Q_S)$ and $x\in X(N)^S\times X(N)^S$ such that $\sigma'$ is an $H$-invariant probability measure supported on $xH$. Since we know that $\sigma'$ the projection to each copy of $G(\Q_S)$ is the uniform measure we find that $\sigma'=\mu_{G,h}$ for some $h\in G(\Q_{S})$. This completes the argument.
\end{proof}

We can now proceed in testing the measure as above. Let $\pi$ be a globally tempered $S_l$-super unramified cuspidal automorphic representation with level $N_{\pi}$. We denote the new-vector in $\pi$ by $\varphi^{\circ}$ and let $\varphi= [\lambda_R\ast \varphi^{\circ}]\otimes \overline{\varphi^{\circ}}$. Suppose that $c>0$, so that by Lemma~\ref{lm:positivity} we have
\begin{equation}
	\sigma(\varphi) = c\cdot \mu_{G^{\triangle}}(\varphi) >0. \nonumber
\end{equation} 
We write $f_{\pi}$ for the classical newform underlying $\varphi^{\circ}$. Our goal is to extend Lemma~\ref{lm:Weyl_sum} to our situation. We start with the estimate
\begin{equation}
	\vert \sigma(\varphi)\vert \leq \Vol(B_R,\lambda)\cdot \sup_{g\in B_R} \vert \sigma (R_g\varphi^{\circ}\otimes \overline{\varphi^{\circ}})\vert.\label{eq:sup_for_depth}
\end{equation}
Inserting the definition of $\sigma$ as a limit of the measures $\mu_{q_n,b_n}^{(S_l,N_{\pi})}$ we find that
\begin{multline}
	\sigma (R_g\varphi^{\circ}\otimes \overline{\varphi^{\circ}}) = \lim_{n\to\infty}\frac{1}{q_n'}\sum_{a\text{ mod } q_n'}\int_{\Z_l}\varphi^{\circ}\left(\iota_{\infty}\left(\begin{matrix} 1 & a \\ 0 & q_n'\end{matrix}\right)\iota_l\left(\begin{matrix} 1 & x_ll^{-\alpha_n} \\ 0 & l^{-\alpha_n}\end{matrix}\right)g\right) \\
	\cdot \overline{\varphi^{\circ}\left(\iota_{\infty}\left(\begin{matrix} 1 & b_na \\ 0 & q_n'\end{matrix}\right)\iota_l\left(\begin{matrix} 1 & b_nx_ll^{-\alpha_n} \\ 0 & l^{-\alpha_n}\end{matrix}\right)\right)} dx_l. \nonumber
\end{multline}
As in the proof of Lemma~\ref{lm:Weyl_sum} we write $g=g_{\infty}g_l$ with
\begin{equation}
	g_{\infty} = \left(\begin{matrix} y_0' & x_0' \\ 0 & 1\end{matrix}\right)k_{\infty} \text{ and }g_{l} = \left(\begin{matrix} l^{t(g)} & \xi \\ 0 & 1\end{matrix}\right)k_{l}.\nonumber
\end{equation}
At this point we need to consider two cases.

First, suppose that $t(g)\leq 0$. Then we write $x_l=x_l'+l^{\alpha_n}z_l$ and split the $x_l$-integral accordingly. After de-adelizing $\overline{\varphi^{\circ}}$ we are left with
\begin{multline}
	\sigma (R_g\varphi^{\circ}\otimes \overline{\varphi^{\circ}}) = \lim_{n\to\infty}\frac{1}{q_n}\sum_{a\text{ mod } q_n}\overline{f_{\pi}\left(\frac{b_n a+i}{q_n}\right)}\int_{\Z_l}\varphi^{\circ}\left(\iota_{\infty}\left(\begin{matrix} 1 & a \\ 0 & q_n\end{matrix}\right)\iota_l\left(\begin{matrix} 1 & z_l \\ 0 & 1\end{matrix}\right)g\right)dz_l. \nonumber
\end{multline}
Further, we obtain
\begin{equation}
	\varphi^{\circ}\left(\iota_{\infty}\left(\begin{matrix} 1 & a \\ 0 & q_n\end{matrix}\right)\iota_l\left(\begin{matrix} 1 & z_l \\ 0 & 1\end{matrix}\right)g\right) = \varphi^{\circ}\left(\iota_{\infty}\left(\begin{matrix} y_0' & x_0'+a \\ 0 & q_n\end{matrix}\right)\iota_l\left(\begin{matrix} l^{t(g)} & \xi+z_l \\ 0 & 1\end{matrix}\right)\right).\nonumber
\end{equation}
In order to continue we consider two cases. First, if $r\leq 0$, we write $\xi l^{-{t(g)}} \in r_g+\Z_l$ and obtain
\begin{equation}
	\sigma (R_g\varphi^{\circ}\otimes\overline{\varphi^{\circ}}) = \lim_{n\to\infty}\frac{1}{q_n}\sum_{a\text{ mod } q_n}\overline{f_{\pi}\left(\frac{a+i}{q} \right)}f_{\pi}\left(\frac{a\overline{b_n}l^{-{t(g)}}+l^{-{t(g)}}x_0+y_0'l^{-{t(g)}}i}{q_n}+r_{g}\right)dz_l,\nonumber
\end{equation}
Note that we were able to remove the $\Z_l$-integral completely and that the denominator of $r_g$ is some power of $l$. After recalling the definition of the Weyl sum from \eqref{eq:def_Weyl_sum} we can write
\begin{equation}
	\sigma (R_g\varphi^{\circ}\otimes\overline{\varphi^{\circ}}) = \mathcal{W}_{\overline{f_{\pi}},f_{\pi}}(\overline{b_n}l^{-{t(g)}},1,q_n,(0,l^{-{t(g)}}x_0),(1,l^{-{t(g)}}y_0'),r_q).\nonumber
\end{equation}
Thus, as in the proof of Lemma~\ref{lm:Weyl_sum}, we can estimate the part of the supremum over $g$ with ${t(g)}\leq 0$ by
\begin{equation}
	\sup_{\substack{g\in B_R,\\ t(g)\leq 0}} \vert \sigma (R_g\varphi^{\circ}\otimes \overline{\varphi^{\circ}})\vert \ll_R \sup_{(y_0,x_0,d,r_0)\in \Omega_-} \left\vert \lim_{n\to\infty} \mathcal{W}_{\overline{f_{\pi}},f_{\pi}}(d\overline{b}_n,1,q_n; (0,x_0),(1,y_0), r_0) \right\vert. \nonumber
\end{equation}
for a suitably chosen set $\Omega_-$. Note that typically $d$ will not be co-prime to $q_n$. Recall that we can still apply Lemma~\ref{lm:comparison_s} to find that
\begin{equation}
	s(q_n;d\overline{b_n}) \asymp_d s(q_n;b_n) \to \infty. \nonumber
\end{equation}
Thus, after applying Proposition~\ref{pr:normal_weyl_sum_est} we find that
\begin{equation}
	\vert \sigma(\varphi)\vert \leq \Vol(B_R,\lambda)\cdot \sup_{\substack{g\in B_R\\ t(g)>0}} \vert \sigma (R_g\varphi^{\circ}\otimes \overline{\varphi^{\circ}})\vert.\nonumber
\end{equation}

We continue with the case $t(g)>0$. Here we split the $\Z_l$-integral differently. We write $x_l=x_l'+l^{\alpha_n+t(g)}z_l$. This allows us to write
\begin{multline}
	\sigma (R_g\varphi^{\circ}\otimes \overline{\varphi^{\circ}}) = \lim_{n\to\infty}\frac{1}{q_nl^{t(g)}}\sum_{a\text{ mod } q_nl^{t(g)}}\overline{f_{\pi}\left(\frac{l^{t(g)}b_n a+l^{t(g)}i}{q_nl^{t(g)}}\right)} \\
	\cdot f_{\pi}\left(\frac{a+x_0'+y_0'i}{l^{t(g)}q_n}+r_g\right). \nonumber
\end{multline}
For some $r_g\in \Q$ whose denominator is a power of $l$. We recognise this as a version of the general Weyl sum defined in \eqref{eq:def_Weyl_sum}. Indeed, we have
\begin{equation}
	\sigma (R_g\varphi^{\circ}\otimes \overline{\varphi^{\circ}}) = \lim_{n\to\infty} \mathcal{W}_{\overline{f_{\pi}},f_{\pi}}(\overline{b_n},l^{t(g)},ql^{t(g)},(0,x_0'),(1,y_0'),r_g). \nonumber
\end{equation}
This leads to
\begin{equation}
	\sup_{\substack{g\in B_R,\\ t(g)> 0}} \vert \sigma (R_g\varphi^{\circ}\otimes \overline{\varphi^{\circ}})\vert \ll_R \sup_{(y_0,x_0,d,r_0)\in \Omega_+} \left\vert \lim_{n\to\infty} \mathcal{W}_{\overline{f_{\pi}},f_{\pi}}(\overline{b_n},d,q_nd; (0,x_0),(1,y_0), r_0) \right\vert. \nonumber
\end{equation}
for a suitable set $\Omega_+$. According to Proposition~\ref{pr:normal_weyl_sum_est} the limit vanishes and we obtain the contradiction
\begin{equation}
	0>\sigma(\varphi) = 0. \nonumber
\end{equation}
We conclude that $c=0$, so that $\sigma = \mu_{G\times G}$ and we obtain the result announced in Theorem~\ref{th_depth}.

Making the obvious modifications to this argument lets us deal with the primitive case as well. One obtains the following result.

\begin{theorem}\label{th_depth_primitive}
Let $(q_n,b_n)$ be a sequence of pairs such that $s(q_n;b_n)\to\infty$ as $n\to \infty$. Suppose that there is $l\in \{37,53\}$ such that $v_l(q_n)\to\infty$ as $n\to\infty$. Fix $\epsilon>0$ and assume that (at least) one of the following two conditions is satisfied:
\begin{enumerate}
	\item $2^{\omega(q_n)}\ll \log(q_n)^{\frac{1}{10}}$ and $q_n^{\epsilon}\ll s(q_n;b_n)$;
	\item $\min(2^{(2+\epsilon)\omega(q_n)}, \exp((C+2)\log\log(q_n)\log\log\log(q_n))) \ll s(q_n;b_n) \ll q_n^{\frac{1/2-2\theta}{A}-\epsilon}$, for an absolute constant $C>0$, $A$ as in Lemma~\ref{lm:shifted_conv_est} and $\theta\leq \frac{7}{64}$ the currently best known bound towards the Ramanujan-Petersson conjecture.
\end{enumerate} 
Then we have
\begin{multline}
	\lim_{n\to\infty} \frac{1}{\varphi(q_n)}\sum_{\substack{a\text{ mod } q_n\\ (a,q_n)=1}} f\left(\frac{1}{\sqrt{q_n}}\left(\begin{matrix}1 & a\\0&q_n\end{matrix}\right),\frac{1}{\sqrt{q_n}}\left(\begin{matrix}1 & b_na\\0&q_n\end{matrix}\right)\right) \\ = \frac{9}{\pi^2}\int_{\SL_2(\Z)\backslash \Hb}\int_{\SL_2(\Z)\backslash \Hb} f(g_1,g_2)dg_1dg_2, \nonumber
\end{multline} 
for all $f\in \mathcal{C}_c^{\infty}(\SL_2(\Z)\backslash \Hb \times \SL_2(\Z)\backslash \Hb)$.
\end{theorem}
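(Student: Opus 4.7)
\medskip
\noindent\textbf{Proof proposal for Theorem~\ref{th_depth_primitive}.}
The plan is to mirror the proof of Theorem~\ref{th_depth}, making the appropriate modifications to accommodate the primitivity condition. First I would introduce the analogue
\begin{equation*}
\tilde{\mu}_{q_n,b_n}^{(S,N)}(f_1\otimes f_2)
=\frac{1}{\varphi(q_n')\,\mu(\Z_l^{\times})}
\sum_{\substack{a\,\mathrm{mod}\,q_n'\\ (a,q_n')=1}}
\int_{\Z_l^{\times}} f_1(\cdots)\,f_2(\cdots)\,dx_l,
\end{equation*}
with the integrands the same as in the definition of $\mu_{q_n,b_n}^{(S,N)}$. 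The crucial compatibility is that for $K_l(N)$-invariant $f_1,f_2$ this recovers, via the Chinese Remainder Theorem, the normalized sum over primitive $a\bmod q_n$, so $\tilde{\mu}_{q_n,b_n}^{(S,N)}$ is indeed the correct $S$-arithmetic lift. Passing to a subsequence I can assume $\alpha_n$ is increasing, $b_n\to b\in\Z_l$, and $\tilde{\mu}_{q_n,b_n}^{(S,N)}\to\tilde{\sigma}$ in the weak-$\ast$ sense.

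Next I would establish the invariance properties of $\tilde\sigma$. The projections to each factor are Haar by Theorem~\ref{th:S-arithmetic_horocycle_prim}. For invariance under the unipotent $\mathfrak{U}_b(\Q_l)$, I would reuse the computation in the proof of Theorem~\ref{th_depth}: given $z\in\Q_l$, for $n$ large the shift $x_l\mapsto x_l+zl^{\alpha_n}$ lies in $l\Z_l$, hence preserves $\Z_l^{\times}$, and the $f_2$-factor changes by $\iota_l(u_{(b-b_n)z})$ which is absorbed into $K_l(N)$ once $n$ is large enough. Then the same $S$-arithmetic application of Ratner's theorem used in Proposition~\ref{pr:measure_class_depth} yields a decomposition
\begin{equation*}
\tilde\sigma=(1-\tilde c)\mu_{G\times G}+\tilde c\,\tilde\mu_{G^{\triangle}},
\qquad
\tilde\mu_{G^{\triangle}}=\int_{G(\Q_S)}\mu_{G,h}\,d\tilde\lambda(h),
\end{equation*}
for some $\tilde c\in[0,1]$ and probability measure $\tilde\lambda$.

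Assuming $\tilde c>0$ for contradiction, choose a globally tempered $S_l$-super unramified $\pi$ (Lemma~\ref{lm:existence}) with new vector $\varphi^{\circ}$; by Lemma~\ref{lm:positivity} there is a radius $R$ so that $\varphi=[\tilde\lambda_R\ast\varphi^{\circ}]\otimes\overline{\varphi^{\circ}}$ satisfies $\tilde\mu_{G^{\triangle}}(\varphi)>0$ while $\mu_{G\times G}(\varphi)=0$, so $\tilde\sigma(\varphi)>0$. Running the de-adelization performed after \eqref{eq:sup_for_depth}, but now with the $\Z_l$-integral restricted to $\Z_l^{\times}$, produces exactly the primitive Weyl sums $\mathcal{W}^{\mathrm{pr}}_{\overline{f_\pi},f_\pi}$ defined in \eqref{eq:def_Weyl_sum_pr}, split into two cases $t(g)\le 0$ and $t(g)>0$ as before. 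Since $s(q_n;d\overline{b_n})\asymp_d s(q_n;b_n)\to\infty$ by Lemma~\ref{lm:comparison_s}, applying Proposition~\ref{pr:primitive_weyl_sum_est} (rather than Proposition~\ref{pr:normal_weyl_sum_est}) drives both suprema to zero under either of the two hypotheses on $s(q_n;b_n)$ and $\omega(q_n)$ listed in the statement, contradicting $\tilde\sigma(\varphi)>0$. Hence $\tilde c=0$, $\tilde\sigma=\mu_{G\times G}$, and strong approximation transfers this into the equidistribution assertion on $\SL_2(\Z)\backslash\Hb\times\SL_2(\Z)\backslash\Hb$.

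The only genuinely new content beyond the unrestricted case is the replacement step in the last paragraph, and the main obstacle is precisely what already forces the technical conditions in Proposition~\ref{pr:primitive_weyl_sum_est}: the extra $\sum_{d\mid q_n}$ produced by opening the Ramanujan sum interacts badly with the successive minimum unless either $2^{\omega(q_n)}$ is controlled or $s(q_n;b_n)$ is polynomially large, which is exactly why the two dichotomous hypotheses propagate verbatim from the Weyl-sum estimate into the statement of the theorem.
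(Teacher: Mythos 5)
Your proposal is correct and follows exactly the route the paper intends: the paper itself disposes of this theorem with the remark that one makes the "obvious modifications" to the proof of Theorem~\ref{th_depth}, and your write-up spells out precisely those modifications (primitive lift $\tilde{\mu}_{q_n,b_n}^{(S,N)}$, invariance via the shift $zl^{\alpha_n}\in l\Z_l$ preserving $\Z_l^{\times}$, Ratner, positivity, and the substitution of Proposition~\ref{pr:primitive_weyl_sum_est} for Proposition~\ref{pr:normal_weyl_sum_est} in the Weyl-sum step). The only detail worth a passing word is that in the $t(g)>0$ case the modulus becomes $q_nl^{t(g)}$, so one should note that the hypotheses on $s$ and $\omega$ transfer from $q_n$ to $q_nl^{t(g)}$ since $l^{t(g)}$ is bounded on $B_R$ and $l\mid q_n$; this is immediate from \eqref{eq:minimum_bound} and Lemma~\ref{lm:comparison_s}.
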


\section{The endgame}

We are now ready to prove Theorem~\ref{th:main}. Let $((q_n,b_n))_{n\in \N}$ be a sequence of integers with $(q_n,b_n)=1$ and $s(q_n;b_n)\to\infty$. We consider the measures
\begin{equation}
	\mu_{q_n,b_n}(f) = \sum_{a\text{ mod } q_n}f\left(\frac{a+i}{q_n},\frac{ab_n+i}{q_n}\right).
\end{equation}
Let $\sigma$ be a weak star limit point of the sequence $(\mu_{q_n,b_n})_{n\in \N}$. Our goal is to show that $\sigma$ is the uniform measure on $\SL_2(\Z)\backslash \Hb\times \SL_2(\Z)\backslash \Hb$. To do so we can pass to a subsequence and assume that $\mu_{q_n,b_n}$ converges to $\sigma$. We consider two cases. First, suppose there exist $l\in \{37,53\}$ and a subsequence $(n_k)_{k\in \N}$ such that $v_l(q_{n_k})\to \infty$ as $k\to \infty$. Then we can apply Theorem~\ref{th_depth}  to this subsequence and we are done.

Thus we can assume the contrary. In particular, we find that the sequences $(v_{37}(q_n))_{n\in\N}$ and $(v_{53}(q_n))_{n\in \N}$ are bounded. This means that there are only finitely many possibilities for $(q_n,(37\cdot 53)^{\infty})$. Of course one of these possibilities must occur infinitely often. Thus, after passing to a subsequence if necessary we can assume $q_n=q_0\cdot q_n'$ with $(q_n',37\cdot 53)=1$. Note that by Remark~\ref{rm:pass_to_subseq} we have $s(q_n';b_n)\to \infty$ as $n\to\infty$. After passing to another subsequence if necessary we can assume that $b_n \equiv b_0\text{ mod }q_0$ is constant as $n\in \N$ varies. 

We now use the Chinese Remainder Theorem to rewrite the measures $\mu_{q_n,b_n}$. More precisely we put $a = a_0q_n'+a_1q_0$ for $a_0 \text{ mod }q_0$ and $a_q\text{ mod }q_n'$. We define 
\begin{equation}
	g_{a_0,q_0}=\left(\begin{matrix}1 & \frac{a_0}{q_0} \\ 0& 1\end{matrix}\right) \text{ and } z_{q_0}=\frac{i}{q_0}.\nonumber 
\end{equation}
We arrive at
\begin{equation}
	\mu_{q_n,b_n}(f) = \frac{1}{q_0}\sum_{a_0=1}^{q_0}\frac{1}{q_n'}\sum_{a_1\text{ mod } q_n'}f\left(g_{a_0,q_0}\left(\begin{matrix} 1 & a_1 \\ 0 & q_n'\end{matrix}\right)z_{q_0},g_{b_0a_0,q_0}\left(\begin{matrix} 1 & b_na_1 \\ 0 & q_n'\end{matrix}\right)z_{q_0}\right).\nonumber
\end{equation}
Recall that $f$ was $\SL_2(\Z) \times \SL_2(\Z)$-invariant. We check that for $\gamma\in \Gamma_1(q_0^2)$ we have $g_{a_0,q_0}\gamma g_{a_0,q_0}^{-1}\in \textrm{SL}_2(\Z)$. Therefore, we can define the function $f_{a_0,b_0,q_0}\colon \Gamma_1(q_0^2)\backslash\SL_2(\R)\times \Gamma_1(q_0^2)\backslash\SL_2(\R) \to \C$ by
\begin{equation}
	f_{a_0,b_0,q_0}(g_1,g_2) = f\left(g_{a_0,q_0}g_1z_{q_0},g_{b_0a_0,q_0}g_2z_{q_0}\right).\nonumber
\end{equation}
This allows us to apply Theorem~\ref{th_shallow} and we obtain
\begin{align}
	&\lim_{n\to\infty} \frac{1}{q_n}\sum_{a\text{ mod } q_n} f\left(\frac{a+i}{q_n},\frac{b_na+i}{q_n}\right) \nonumber\\
	&\qquad  = \frac{1}{q_0}\sum_{a_0=1}^{q_0}\lim_{n\to\infty}\frac{1}{q_n'}\sum_{a_q\text{ mod }q_n'}f_{a_0,b_0,q_0}\left(\left(\begin{matrix} 1 & a_1\\0&q_n'\end{matrix}\right),\left(\begin{matrix} 1 & b_na_1\\0&q_n'\end{matrix}\right)\right)\nonumber \\
	&\qquad =\frac{9/\pi^2}{q_0\cdot [\SL_2(\Z)\colon \Gamma_1(q_0^2)]^2}\sum_{a_0=1}^{q_0}\int_{\Gamma_1(q_0^2)\backslash \textrm{SL}_2(\R)}\int_{\Gamma_1(q_0^2)\backslash \textrm{SL}_2(\R)}f_{a_0,b_0,q_0}(g_1,g_2)dg_1dg_2.\nonumber
\end{align}
After appropriately changing variables in the remaining integral we obtain
\begin{multline}
	\lim_{n\to\infty} \frac{1}{q_n}\sum_{a\text{ mod } q_n} f\left(\frac{a+i}{q_n},\frac{b_na+i}{q_n}\right) \\
	 = \int_{\textrm{SL}_2(\Z)\backslash\Hb}\int_{\textrm{SL}_2(\Z)\backslash \Hb}f(x_1+yiy_1,x_2+iy_2)\frac{dx_1dy_1}{y_1^2}\frac{dx_2dy_2}{y_2^2}. \nonumber
\end{multline}
This is the equidistribution statement we are looking for and the proof of Theorem~\ref{th:main} is complete.

Similarly one combines Theorem~\ref{th_shallow_primitive} and Theorem~\ref{th_depth_primitive} to establish Theorem~\ref{th:main_prim}. We omit the details.\\

\textbf{Acknowledgement:} The author would like to thank Valentin Blomer and Philippe Michel for answering questions concerning their paper \cite{BM} and for much encouragement while writing this paper. Furthermore, I would like to thank Jens Marklof for pointing me to the papers \cite{Ma1, Ma2} and Radu Toma for fruitful conversations about lattices. I thank Uri Shapira for raising many interesting questions concerning this work. One of these questions motivated us to include Theorem~\ref{th:main_prim} in this paper. Further, I would like to thank Andreas Wieser and Zuo Lin for spotting a small mistake  related to Remark~\ref{rm:Wieser}, which appeared in an earlier version of this article. Last but not least I thank the anonymous referee for many useful comments.

\appendix

\section{A shifted convolution problem}\label{sec:app}

In this appendix we will prove the following generalization of \cite[Proposition~2.1]{BM}.

\begin{prop}\label{pr:shifted_conv}
Let $N_1,N_2,d,l_1,l_2\in \N$ and $H,M_1,M_2,P_1,P_2\geq 1$. Let $f,g$ be two cuspidal newforms of levels $N_1,N_2$ respectively and central characters $\chi_f,\chi_g$ with Hecke eigenvalues $\lambda_f(m), \lambda_g(m)$. Let $G$ be a smooth function supported on $[M_1,2M_2]\times  [M_2,2M_2]$ satisfying $\Vert G^{(i,j)}\Vert_{\infty} \ll_{i,j} (P_1/M_1)^i(P_2M_2)^j$ for $i,j\in \N_0$. For $H\leq h\leq 2H$ let $\vert\alpha(h)\vert \leq 1$. Then
\begin{multline}
	\mathcal{D} = \sum_{\substack{H\leq h\leq 2H,\\ d\mid h}} \alpha(h) \left\vert \sum_{l_1m_1\pm l_2m_2 = h} \lambda_f(m_1)\lambda_g(m_2)G(m_1,m_2)\right\vert \\ 
	\ll d^{\theta-\frac{1}{2}+\epsilon}(l_1l_2)^{\frac{1}{2}-\theta+\epsilon} \cdot (l_1M_1+l_2M_2)^{\frac{1}{2}+\theta+\epsilon} H^{\frac{1}{2}+\epsilon}\left(1+\frac{H/d}{l_1l_2(1+H/l_2M_2)}\right)^{\frac{1}{2}}. \nonumber
\end{multline}
The implied constant depends on $\epsilon$ and polynomially on $P_1, P_2$ and the conductors of $f,g$.
\end{prop}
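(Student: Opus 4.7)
\emph{Proof outline.} The plan is to apply the spectral-theoretic approach to shifted convolution problems, following the framework of Blomer and Harcos and as adapted in [BM, Proposition~2.1], with modifications to handle the three new features of the statement: the linear coefficients $l_1, l_2$, the congruence condition $d \mid h$, and the possibility that $l_1, l_2, d$ share common factors.

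First, I would linearize $\mathcal{D}$ by choosing a unimodular sequence $\beta(h)$ with $|\beta(h)| \leq 1$ so that $\alpha(h)\beta(h)$ returns the absolute value of the inner sum. Substituting $h = dh'$, the shift equation becomes $l_1 m_1 \pm l_2 m_2 = d h'$. Using M\"obius inversion one reduces to the case where $(l_1, l_2, d)$-coprimality is controlled, and the Hecke relations allow absorbing the coefficients $l_1, l_2$ into Fourier coefficients of newforms of level dividing $l_i^2 N_i$, at a multiplicative cost that will be absorbed into the final $(l_1 l_2)^\epsilon$.

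Next, I would detect the linear relation $l_1 m_1 \pm l_2 m_2 = d h'$ by taking inner products against a Poincar\'e series $P_{dh'}$ for a congruence subgroup whose level is divisible by $d \cdot l_1 l_2$. Spectrally expanding $P_{dh'}$ across the automorphic spectrum at that level produces a sum
$$\sum_{u_j} \rho_{u_j}(dh') \cdot \langle f \overline{g}, u_j \rangle + \text{(Eisenstein contribution)},$$
where $\rho_{u_j}(dh') \ll (dh')^\theta$ is the Ramanujan-Petersson bound, and the triple products are estimated via Cauchy-Schwarz combined with the spectral large sieve (the latter paying an amount of order $d^{1/2}$ for the conductor of $P_{dh'}$). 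The balance of these two bounds yields the characteristic $d^{\theta - 1/2}$ saving, and the factor $(l_1 l_2)^{1/2 - \theta + \epsilon}$ arises from the combined $l_1 l_2$-dependence of the large sieve against the $l_i^{-\theta}$ savings in the Hecke-relation expansion at primes dividing $l_i$.

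The remaining factors correspond to: the size $l_1 M_1 + l_2 M_2$ of the shift window, entering as $(l_1 M_1 + l_2 M_2)^{1/2 + \theta + \epsilon}$ via the support of $G$ and the Weyl law at the relevant level; the $H^{1/2 + \epsilon}$ factor, from a final Cauchy-Schwarz over the $h$-sum; and the correction $(1 + \frac{H/d}{l_1 l_2 (1 + H/(l_2 M_2))})^{1/2}$, which bounds the contribution of near-diagonal configurations $l_1 m_1 \approx \mp l_2 m_2$ (for the sign $-$ this requires a separate Rankin-Selberg treatment of the exact diagonal $l_1 m_1 = l_2 m_2$, where spectral theory is not needed). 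The main obstacle will be the bookkeeping when $l_1, l_2, d$ share common factors: the level of the spectral expansion then depends on their $\gcd$s, and one must verify that the final exponent of $d$ remains $\theta - 1/2$ rather than being weakened by oldform contributions or by losses at common divisors in the Hecke-relation expansion.
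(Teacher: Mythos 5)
Your outline takes a genuinely different route from the paper: you propose the Blomer--Harcos style spectral decomposition, detecting $l_1m_1\pm l_2m_2=h$ by an inner product against a Poincar\'e series and expanding spectrally into triple products, whereas the paper follows \cite{Bl} (as in \cite{BM}), reducing $\mathcal{D}$ to averages of twisted Kloosterman sums and proving a generalization of \cite[Lemma~2.1]{BM} (Lemma~\ref{lm:B2}) via the Kuznetsov formula, the spectral large sieve, and the Fourier-coefficient estimate \cite[(5.12)]{BM14}. In principle the Poincar\'e-series route can deliver bounds of comparable shape, so the difference of method is not by itself an objection.

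However, there is a genuine gap at precisely the point where Proposition~\ref{pr:shifted_conv} goes beyond its predecessors, namely the condition $d\mid h$ without the coprimality hypothesis $(d,h/d)=1$. Your accounting for the factor $d^{\theta-\frac12}$ does not add up: if the Fourier coefficient of the Poincar\'e frequency contributes $(dh')^{\theta}$ and the large sieve ``pays $d^{1/2}$ for the conductor,'' the net effect is a loss of order $d^{\theta+\frac12}$, not a saving $d^{\theta-\frac12}$. The saving has to come from the $h$-sum running over only $H/d$ multiples of $d$, played off against the size of the Fourier coefficients of the spectral basis vectors at those multiples; and here your appeal to ``$\rho_{u_j}(dh')\ll (dh')^{\theta}$ by Ramanujan--Petersson'' is not valid, because the $u_j$ in the spectral expansion at level divisible by $dl_1l_2N_1N_2$ are general $L^2$-normalized (old)forms, whose Fourier coefficients at indices sharing factors with the level do not obey the na\"ive newform bound; the dependence on $(d,\text{level})$ and on the non-squarefree part of $dh'$ is exactly the technical issue created by dropping $(d,h/d)=1$, which the paper resolves on average via \cite[(5.12)]{BM14} (giving $d^{2\theta-1}$ after squaring, together with the extra factor $\bigl(1+\frac{H/d}{N(1+HS/Q^2)}\bigr)^{1/2}$ that survives into the statement --- it is a level/ramification artefact of the large sieve at multiples of $d$, not a ``near-diagonal'' term, and note the exact diagonal never occurs since $h\geq H\geq 1$). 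Until you replace the individual Ramanujan bound by such an on-average input for coefficients at multiples of $d$ in a non-newform basis, and redo the bookkeeping of the $d$-exponent accordingly, the claimed bound $d^{\theta-\frac12+\epsilon}$ is unsupported.
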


The main difference is that we have dropped the condition $(d,h/d)=1$ in the $h$-sum. This might seem like a minor detail, but it turns out that dropping this condition produced several technical complications.

The proof of Proposition~\ref{pr:shifted_conv} reduces to a suitable estimate for averages of (twisted) Kloosterman sums, which generalizes \cite[Lemma~2.1]{BM}.

\begin{lemmy}\label{lm:B2}
Let $P_0,P_1,P_2,S,H,Q\geq 1$ and $d,N\in \N$. Let $\chi$ be a Dirichlet character modulo $N$. Let $u$ be a smooth function supported on $[H,2H] \times [S,2S] \times [Q,2Q]$ satisfying $\Vert u^{(i,j,k)}\Vert_{\infty}\ll (P_0/H)^i(P_1/S)^j(P_2/Q)^k$ for $0\leq i,j,k\leq 2$. Let $a(h)$ and $b(h)$ be sequences of complex numbers supported on $H\leq h\leq 2H$ and $S\leq s \leq 2S$ with $\vert a(h)\vert,\vert b(s)\vert \leq 1$. Then
\begin{multline}
	\sum_s\sum_{N\mid q}\sum_{d\mid h} a(h)b(s)S_{\chi}(\pm h,s,q) u(h,s,q) \\ \ll d^{\theta-\frac{1}{2}+\epsilon}N^{\frac{1}{2}-\theta+\epsilon}(HS)^{\frac{1}{2}+\epsilon}Q^{1+\epsilon}\left(1+\frac{HS}{Q^2}+\frac{S}{N}\right)^{\frac{1}{2}} \left(1+\frac{H/d}{N(1+HS/Q^2)}\right)^{\frac{1}{2}}\\ \cdot \left(1+\left(\frac{HS}{Q}\right)^{-\theta}\right). \nonumber
\end{multline}
The implied constants depends on $\epsilon$ and polynomially on $P_1,P_1,P_2$.
\end{lemmy}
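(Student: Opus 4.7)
I propose to prove Lemma~\ref{lm:B2} as a refinement of \cite[Lemma~2.1]{BM} (which we would follow closely) that tracks the extra divisibility $d\mid h$. The plan is to make the change of variables $h=dh_1$, so that $h_1$ runs over an interval of length $\asymp H/d$, and to rewrite the triple sum as
\begin{equation*}
\sum_{H/d\leq h_1\leq 2H/d}\sum_{s}\sum_{N\mid q} a(dh_1)b(s)S_{\chi}(\pm dh_1,s,q)\,u(dh_1,s,q).
\end{equation*}
After this rescaling, the ambient ranges are $(H/d,S,Q)$ rather than $(H,S,Q)$, which is where the $H/d$ inside the last factor originates. I would then invoke the Kuznetsov trace formula for $\Gamma_0(N)$ with nebentypus $\chi$ on the $q$-sum to convert $\sum_{N\mid q}S_\chi(\pm dh_1,s,q)\phi(dh_1,s,q)$ into a spectral sum $\sum_\pi \overline{\lambda_\pi(s)}\lambda_\pi(dh_1)\widetilde{\phi}_\pi$ (plus Eisenstein and holomorphic contributions), with an integral transform $\widetilde{\phi}_\pi$ that is negligible unless the spectral parameter is $\ll Q^\epsilon(1+\sqrt{HS}/Q)$.

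The next step is to separate $\lambda_\pi(dh_1)$ into pieces depending on $d$ alone and on $h_1$ alone. Using the Hecke multiplicativity
\begin{equation*}
\lambda_\pi(dh_1)=\sum_{e\mid (d,h_1)}\mu(e)\chi(e)\lambda_\pi(d/e)\lambda_\pi(h_1/e)
\end{equation*}
(with the usual care at primes dividing $N$, where one expands $d$ and $h_1$ multiplicatively and treats ramified primes by a direct Ramanujan-Petersson bound), I would pull the $\lambda_\pi(d/e)$ outside the $(h_1,s)$ Cauchy-Schwarz. Bounding $|\lambda_\pi(d/e)|\ll_\epsilon (d/e)^{\theta+\epsilon}$ by the best known bound towards Ramanujan-Petersson and summing trivially over $e\mid d$ produces the $d^{\theta-1/2+\epsilon}$ saving (the $d^{-1/2}$ half comes from the shortened $h_1$-range under the large sieve, combined with the factor $d^\theta$ from the eigenvalue bound).

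Having reduced matters to quadratic forms in $\lambda_\pi(h_1/e)\lambda_\pi(s)$, Cauchy-Schwarz in $(h_1,s)$ splits the spectral average into two factors, each of which is controlled by the spectral large sieve of Deshouillers-Iwaniec (cf.\ the estimates used in \cite[Lemma~2.1]{BM}). The large sieve applied to the $s$-sum yields the factor $N^{\epsilon-\theta+1/2}S^{1/2}(1+S/N)^{1/2}$, and applied to the $h_1$-sum yields the $(H/d)^{1/2}(1+\tfrac{H/d}{N(1+HS/Q^2)})^{1/2}$ factor; the exceptional-spectrum contribution supplies the $1+(HS/Q)^{-\theta}$ factor. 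The main obstacle is the honest bookkeeping of the Hecke relations when $(d,N)>1$ and when $(d,h_1)>1$: one has to ensure that the implicit constants depend polynomially on $N$ only through $N^\epsilon$ and that the $\theta$-exponent on $d$ is not degraded. This can be handled by working prime-by-prime and invoking the local Ramanujan bound at ramified primes, exactly as in the derivation of the Hecke relations in \cite[Section~2]{BM}. Combining the above bounds and undoing the substitution $h=dh_1$ yields the claimed inequality.
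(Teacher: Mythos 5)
Your overall skeleton (separate the variables in $u$, apply the Kuznetsov formula for level $N$ with nebentypus $\chi$ to the $q$-sum, Cauchy--Schwarz, then the spectral large sieve, with the exceptional spectrum giving the factor $1+(HS/Q)^{-\theta}$) is the same as the paper's, which follows \cite[Lemma~2.1]{BM} and \cite[Proposition~3.5]{Bl}. The point where you diverge is exactly the new content of the lemma, namely the condition $d\mid h$, and there your argument has a genuine gap. After Kuznetsov the arithmetic weights are the Fourier coefficients $\rho_f(dh_1)$ of an \emph{orthonormal basis} $\mathcal{B}(N,\chi)$, not Hecke eigenvalues $\lambda_\pi$ of newforms. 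The inversion $\lambda_\pi(dh_1)=\sum_{e\mid(d,h_1)}\mu(e)\chi(e)\lambda_\pi(d/e)\lambda_\pi(h_1/e)$ that you use to peel off a factor $d^{\theta+\epsilon}$ is available only for the part of $d$ coprime to $N$ (where the basis can be taken to consist of eigenforms of the $T_n$, $(n,N)=1$); at primes dividing $N$ it fails for the oldforms in the basis, and no ``direct Ramanujan--Petersson bound at ramified primes'' of the shape $|\rho_f(dh_1)|\ll d^{\theta+\epsilon}|\rho_f(h_1)|$ exists: for instance, for an oldform built from $g(c\,\cdot)$ one has $\rho_f(n)=0$ unless $c\mid n$, so $\rho_f(h_1)$ may vanish while $\rho_f(dh_1)$ does not. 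Note that the lemma is stated (and needed) for $d$ not necessarily coprime to $N$. Making your peeling step rigorous amounts to re-proving a large sieve inequality with coefficients $\rho_f(sh)$, $s$ fixed and $(s,h)=1$; this is precisely \cite[(5.12)]{BM14}, which the paper invokes after splitting $h$ into its $(Nd)$-part $s$ (so $d\mid s$, $s\mid(Nd)^\infty$) and its complementary part. That inequality carries the factors $s^{2\theta}(N,s)^{1-2\theta}$, and summing over $s$ yields $N^{1-2\theta}d^{2\theta-1}H\bigl(T^2+\tfrac{H}{dN}\bigr)$, hence the $N^{\frac12-\theta}$ and $d^{\theta-\frac12}$ in the statement.

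Relatedly, your bookkeeping misattributes the factor $N^{\frac12-\theta}$ to the large sieve in the $s$-variable: that application only gives $S^{1+\epsilon}(T^2+S/N)$ and produces no such factor. The $N^{\frac12-\theta}$ (and the hidden $(N,s)^{1-2\theta}$ losses) come from the divisibility/ramification analysis of the $h$-sum just described, so a purely ``unramified Hecke relations plus RP'' treatment cannot produce the correct $N$-dependence when $(d,N)>1$ or when oldforms contribute. Either quote \cite[(5.12)]{BM14} as the paper does, or supply the newform-decomposition argument of \cite[Section~5]{BM14} in place of the appeal to multiplicativity and a local bound. The remaining ingredients of your sketch (the two-variable Fourier separation of $u$ before Kuznetsov, the decay of the Bessel transforms, and the holomorphic and Eisenstein contributions) match the paper's proof.
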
 
As in \cite{BM} we closely follow the proof of \cite[Proposition~3.5]{Bl}. We will only briefly sketch the argument and focus on the part where different ideas become relevant.
\begin{proof}
We start by defining
\begin{equation}
	U(t_0,t_1;q) = \int_{\R} \int_{\R} u\left(x,y\frac{4\pi \sqrt{xy}}{q}\right)\cdot \frac{4\pi \sqrt{xy}}{q}e(-t_0x-t_1y) dxdy. \nonumber
\end{equation}
This function replaces $U_h(t,q)$ defined in \cite[(3.13)]{Bl} and satisfies
\begin{equation}
	\frac{\partial^n}{\partial q^n} U(t_0,t_1;q) \ll_n \left(1+\frac{Ht_0}{P_0}\right)^{-2}\left(1+\frac{St_1}{P_1}\right)^{-2}HSQ\left(\frac{QP_2}{\sqrt{HS}}\right)^n. \nonumber
\end{equation}
Fourier inversion also yields
\begin{equation}
	q\cdot u(h,s,q) = \int_{\R}\int_{\R}U\left(t_0,t_1,\frac{4\pi \sqrt{hs}}{q}\right) e(t_0h+t_1s)dt_0dt_1. \nonumber
\end{equation}
Instead of \cite[(3.16)]{Bl} we arrive at
\begin{multline}
	\sum_s\sum_{N\mid q}\sum_{d\mid h} a(h)b(s)S_{\chi}(\pm h,s,q) u(h,s,q) \\
	= \int_{\R}\int_{\R}	\sum_s b_{t_1}(s)\sum_{d\mid h} a_{t_0}(h)\sum_{N\mid q}\frac{1}{q} S_{\chi}(\pm h,s,q) U\left(t_0,t_1,\frac{4\pi \sqrt{hs}}{q}\right) dt_0dt_1, \nonumber
\end{multline}
for $a_{t_0}(h) = e(t_0h)a(h)$ and $b_{t_1}(s)= e(t_1s)b(s)$. 

At this stage we apply the Kuznetsov formula to the $q$-sum. See for example \cite[Proposition~2.3]{Bl} for a precise statement. This step is slightly different depending on our choice sign $\pm$. We will exclusively discuss the case when $h$ and $s$ have the same sign. The other case is handled similarly.

We obtain
\begin{multline}
	\sum_{N\mid q}\frac{1}{q} S_{\chi}(\pm h,s,q) U\left(t_0,t_1,\frac{4\pi \sqrt{hs}}{q}\right) \\
	= \mathcal{M}_{\textrm{Ma}}(U(t_0,t_1,\cdot);h,s)+\mathcal{M}_{\textrm{hol}}(U(t_0,t_1,\cdot);h,s)+\mathcal{M}_{\textrm{Eis}}(U(t_0,t_1,\cdot);h,s), \nonumber
\end{multline}
for 
\begin{equation}
	\mathcal{M}_{\textrm{Ma}}(U(t_0,t_1,\cdot);h,s) = \sum_{f\in \mathcal{B}(N,\chi)} \frac{\overline{\rho_f(h)}\rho_f(s)}{\cosh(\pi t_f)}\widetilde{U}(t_0,t_1;t_f) \nonumber
\end{equation}
and similar contributions of holomorphic cusp forms and Eisenstein series. In what follows we will discuss how to handle the $\mathcal{M}_{\textrm{Ma}}$ contribution. We leave it as an exercise for the reader to fill in the details necessary to handle the terms arising from $\mathcal{M}_{\textrm{hol}}$ and $\mathcal{M}_{\textrm{Eis}}$. 

After applying Cauchy-Schwarz we see that the contribution of Maa\ss\ cusp forms is bounded by
\begin{multline}
	\mathcal{S}_{\textrm{Ma}}=\int_{\R}\int_{\R}\left(\sum_{f\in \mathcal{B}(N,\chi)} \frac{\vert \widetilde{U}(t_0,t_1;t_f)\vert}{\vert \cosh(\pi t_f)\vert}\left\vert \sum_s b_{t_1}(s)\rho_f(s) \right\vert^2\right)^{\frac{1}{2}} \\ \cdot \left(\sum_{f\in \mathcal{B}(N,\chi)} \frac{\vert \widetilde{U}(t_0,t_1;t_f)\vert}{\vert \cosh(\pi t_f)\vert}\left\vert \sum_{d\mid h} \overline{a_{t_0}(h)}\rho_f(h) \right\vert^2\right)^{\frac{1}{2}} dt_0dt_1.\nonumber
\end{multline}

Now we recall the large sieve inequality
\begin{equation}
	\sum_{\substack{f\in \mathcal{B}(N,\chi),\\ \vert t_f\vert \leq T}} \frac{1}{\cosh(\pi t_f)}\left\vert \sum_{s}b_{t_1}(s)\rho_f(s)\right\vert^2 \ll_{\epsilon} S^{1+\epsilon}\left(T^2+\frac{S}{N}\right). \nonumber
\end{equation}
from \cite[Proposition~3.3]{Bl}. This is sufficient to handle the part containing the $s$-sum. Here we derive the estimate 
\begin{align}
	&\sum_{\substack{f\in \mathcal{B}(N,\chi),\\ \vert t_f\vert \leq T}} \frac{1}{\cosh(\pi t_f)}\left\vert \sum_{d\mid h}a_{t_0}(h)\rho_f(h)\right\vert^2 \\
	&\qquad \ll (dHN)^{\epsilon}\sum_{\substack{s\mid (Nd)^{\infty}\\ d\mid s \\ s\ll H}}\sum_{\substack{f\in \mathcal{B}(N,\chi),\\ \vert t_f\vert \leq T}} \frac{1}{\cosh(\pi t_f)}\left\vert \sum_{(s,h)=1}a_{t_0}(sh)\rho_f(sh)\right\vert^2 \nonumber\\
	&\qquad \ll (dHNT)^{\epsilon} \sum_{\substack{s\mid (Nd)^{\infty}\\ d\mid s \\ s\ll H}} s^{2\theta}(N,s)^{1-2\theta}\left(T^2+\frac{H}{sN}\right)\frac{H}{s} \nonumber\\
	&\qquad \ll (dHNT)^{\epsilon} N^{1-2\theta}d^{2\theta-1}H\left(T^2+\frac{H}{dN}\right) \nonumber
\end{align}
from \cite[(5.12)]{BM14}.

Further, recall that $\widetilde{U}(t_0,t_1;t_f)$ is defined in \cite[(2.19)]{Bl} and satisfies the bounds given in \cite[Lemma~2.4]{Bl} (with the obvious modifications to our setting). This allows us to complete the argument as in \cite{Bl}. 
\end{proof}

We can now complete the proof of Proposition~\ref{pr:shifted_conv}. As in \cite{BM} this is done by closely following the steps in the proof of \cite[Theorem~1.3]{Bl}. The key changes are already highlighted at the end of \cite[Section~2]{BM}. We replace \cite[Proposition~3.5]{Bl} with our Lemma~\ref{lm:B2}, which in turn is a modification of \cite[Lemma~2.1]{BM}. We thus have to replace the factor $h^{\theta}$ in \cite[(4.19)]{Bl} with $$ d^{\theta-\frac{1}{2}+\epsilon}H^{\frac{1}{2}+\epsilon}\left(1+\frac{H/d}{l_1l_2(1+H/l_2M_2)}\right)^{\frac{1}{2}}.$$  It is then straight forward to complete the proof of Proposition~\ref{pr:shifted_conv}.


\end{document}